\numberwithin{equation}{section}
\newtheorem{theorem}{Theorem}
\newtheorem{lemma}[theorem]{Lemma}
\newtheorem{remark}[theorem]{Remark}
\newtheorem{proposition}[theorem]{Proposition}
\newtheorem{corollary}[theorem]{Corollary}
\numberwithin{theorem}{section}
\DeclareMathOperator*{\argmax}{arg\,max}
\newtheorem{definition}[theorem]{Definition}
\theoremstyle{remark}
\newcommand\Atop[2]{\genfrac{}{}{0pt}{}{#1}{#2}}
\newcommand{\Ab}{\mathbf{A}}
\newcommand{\AF}{\mathfrak{A}}
\newcommand{\AK}{{\bf\mathcal{A}}}
\newcommand{\EE}[1]{\mathds{E}\left[#1\right]}
\newcommand{\EXP}{\mathds{E}}
\newcommand{\fracN}{{\tfrac{1}{N}}}
\newcommand{\MM}{\mathscr{M}}
\newcommand{\NN}{\mathds{N}}
\newcommand{\norm}[1]{\left\vert#1\right\vert}
\newcommand{\Norm}[1]{\left\Vert#1\right\Vert}
\newcommand{\prob}{\mathds{P}}
\newcommand{\pset}{\mathscr{P}}
\newcommand{\RR}{\mathds{R}}
\newcommand{\XX}{\mathscr{X}}
\newcommand{\Pmass}{\mathscr{P}}
\newcommand{\deltab}{\bm{\delta}}
\newcommand{\mub}{\bm{\mu}}
\newcommand{\etab}{\bm{\eta}}
\newcommand{\ub}{\bm{u}}
\newcommand{\xb}{\bm{x}}
\newcommand{\prtl}{\partial}
\date{}
\begin{document}

\title{An Approximate Nash Equilibrium for Pure Jump Markov Games of Mean-field-type on Continuous State Space}
\author{Rani Basna\thanks{Department of Mathematics, Linnaeus University, V\"axj\"o, 351 95, Sweden. e-mail: rani.basna@lun.se}\,\,  \, Astrid Hilbert\thanks{Department of Mathematics, Linnaeus University, V\"axj\"o, 351 95, Sweden. e-mail: astrid.hilbert@lnu.se}\,\, and \, Vassili N. Kolokoltsov\thanks{ Department of Statistics, University of Warwick, Coventry, CV4 7AL, UK. e-mail: v.kolokoltsov@warwick.ac.uk}}
\maketitle

\begin{abstract}
We investigate mean-field games from the point of view of a large number of indistinguishable players, which eventually converges to infinity. The players are weakly coupled via their empirical measure. The dynamics of the states of the individual players is governed by a non-autonomous pure jump type semi group in a Euclidean space, which is not necessarily smoothing. Investigations are conducted in the framework of non-linear Markov processes. We show that the individual optimal strategy results from a consistent coupling of an optimal control problem with a forward non-autonomous dynamics. In the limit as the number $N$ of players goes to infinity this leads to a jump-type analog of the  well-known non-linear McKean-Vlasov dynamics. The case where one player has an individual preference different from the ones of the remaining players is also covered. The two results combined reveal an epsilon-Nash Equilibrium for the $N$-player games.
\end{abstract}

{\bf Mathematics Subject Classification (2010):} 91A22, 91A13, 91A15,\\ 60J75, 60J25.

{$\bf Keywords$}: Mean-field games, pure jump Markov process, Dynamic programing,
Optimal control, Non-linear Markov processes, Koopman Dynamics, $\epsilon$-Nash equilibrium.

\parindent = 0pt
\section{Introduction}

Mean-field game theory is a type of dynamic Game theory. The indistinguishable individual agents are coupled with each other by their individual dynamics through the empirical measure. Moreover, the objective for each player is a function not only of her own preference and decision but also of the decisions of the other players via the mean-field. Mathematically seen it is a combination between mean-field theory and the theory of stochastic differential games describe a control problem with a large number $N$ of agents by letting $N$ tend to infinity. The main idea of mean-field game theory is to designate an approximate form of equilibrium between the symmetric agents' strategies while the number of agents goes to infinity.
The impact of the individual decisions of the other agents is becoming extremely weak compared to the overall impact as $N$ increases. The limiting model emerges from the fact that the dynamics of the individual players decouple and each player constructs her strategy from her own state and from the state of the mass of an infinite number of co-agents of hers which is called the mean-field approach.
\vskip 0.1cm
The mean-field approach in the context of differential games has been independently developed in three mathematical settings. The term mean-field games was introduced By J.-M. Lasry and P.-L. Lions in a series of papers, see \cite{OLL} and \cite{LL}
and the references therein, using nonlinear PDE's.  Independently M. Huang, P. Caines,
Malham\'{e} developed a similar approach, see \cite{HCM2} in the general setting of stochastic processes. They use the term Nash Certainty Equivalence for constructing the mean-field games via a converging iteration of well defined random dynamics and control problems. The contributions \cite{BF,CA} deepen and extend the results for diffusion processes. V.Kolokotsov et. al. investigate mean-field games theory in the setting non-linear Markov Processes where a $1/N$ estimate appears, see \cite{K2}. For work on games with discrete state space see Gomes et. al. \cite{GO} and Basna et. al. \cite{BHK}.
\vskip 0.1cm
There are numerous important applications of mean-field games in many areas, of which we only mention a few. Caines and collaborators investigate the application to large communication as well as electricity networks and analyse the behavior of the large population dynamics. The group of researchers collaborating with Lions examine applications for the oil industry and in the analysis of pedestrian crowds. Carmona et al. focus on inter-banking trading.
\vskip 0.1cm
The investigations in this work are carried out in the framework of non-linear Markovian propagators, respectively time inhomogeneous non-linear Feller processes, which was developed by Vassili Kolokoltsov \cite{K3} \cite{K4}. We focus on propagators related to processes of pure jump type with finite intensity measure in a finite dimensional Euclidean space, that can be identified with the possible decisions of the players. This way we generalize results obtained in \cite{BHK}. In analogy to \cite{BHK} our starting point of the so called closed-loop construction including an optimal control is the following backward Kolmogorov equation for $N$ players:
\begin{eqnarray}\label{eqn:kolmogorov}
 \frac{\prtl f_s}{\prtl s} + \AF^N[s,\xb,\mu^N,u]f_s(\xb)
            &=& 0,\quad 0\le t < s\le T \\
    f_T(\xb)&=&  \Phi({\xb}),  \quad \xb\in\RR^{Nd} ,\nonumber
\end{eqnarray}
where $f$ is a function from the domain $\mathfrak{D}(\AF^N[s,\xb,\mu,u])$,
$\mu^{N}:= {\tfrac{1}{N}}\sum_{i=1}^{N} \delta_{x_i}$ is a normalized sum of Dirac measures in $\RR^d$, and the parameter $u\in\RR^m$ represents a control law. Finally, for $\xb\in\RR^{Nd}$, $N\in\NN$ and $t\le s\le T$ the generator $\AF^N$ is of the form
\begin{equation}\label{jumpgenerator}
 \AF^N[s,\xb,\mu^N,u]f_s(\xb) =  \sum_{i=1}^{N}  \Ab^{N,i}[s,x_i,\mu^N,u]f_s(\xb),
\end{equation}
where the operators
\begin{equation}
   \Ab^{N,i}[s,x_i,\mu^N,u]f_s(\xb)
   := \int_{\RR^d} \left(f_{i'}(s,y)-f_{i'}(s,x_i)\right)
                     \nu(s,x_i,\mu^N,u,dy),
\end{equation}
on $C_\infty(\RR^{Nd})$, referring to distinct indistinguishable agents or players $i$, act on the component $x_i\in\RR^d$ in the argument of the function $f$ only, while all other components of $f$ are kept fix. This leads us to introduce the functions $f_{i'}\in C_\infty (\RR^{d})$ such that $f_{i'}(x_i) = f_{\xb'_i}(x_i)=f(\xb)$, where the vector $\xb'_i\in\RR^{(N-1)d}$ is kept fix, and $\xb'_i$ is derived from the vector $\xb$ by removing the component $x_i$. The particle limit being anticipated, the empirical mean $\mu^N$ is replaced by an external constant measure valued parameter $\rho\in\MM:=\MM(\RR^d)$, which is the set of finite measures, representing the limiting distribution $\mu_t$ in (\ref{16}), which solves the kinetic equation. Moreover, the individual dynamics split and we may identify the operators $\Ab^{N,i}$ with the integral operator
\begin{equation}\label{Jumpoperator}
 \Ab[s,x_i,\rho,u]f_{i'}(x_i) :=  \Ab^{N,i}[s,x_i,\rho,u]f(\xb)
\end{equation}
on $C_\infty(\RR^{d})$. We finally drop the index $i$ alltogether. Adopting a notation from physics we shall say the operator $\AF^N$ describes the dynamics of the $N$-mean-field model.
\vskip 0.1cm
As mentioned above the construction involves a mean-field type limit consistent with a given optimal control problem.
This is a particular example of measure valued limits from the theory of interacting particle systems. A key role within the toolbox of this theory plays the injection from the equivalence class $S\RR^{Nd}$ of vectors in $\RR^{Nd}$, which are identical up to a permutation of players, into the set of point measures in $\RR^d$, defined by

\begin{equation}\label{x2Px}
  \xb = (x_1,\ldots ,x_N)\quad\longrightarrow\quad
  \fracN (\delta_{x_1}+ \ldots + \delta_{x_N})\ .
\end{equation}

More precisely, for arbitrary $N\in\NN$ the mapping constitutes a bijection between $S\RR^{Nd}$ and the subset
$\Pmass^N_\delta:=\{\mu\in\MM \mid \mu=\frac{1}{N} \sum_{k=1}^{N}\delta_{x_k}\}$ of normalized sum of Dirac measures in $\RR^d$ and the spaces may be identified.
For each natural number $N$ the space of symmetric real valued functions on $\RR^{Nd}$ which are invariant under component-wise permutations of their arguments is equivalent with the space of  real valued functions on $S\RR^{Nd}$.

The operator $\AF^N[s,\xb,\mu^N,u]$ on $C_\infty(\RR^{Nd})$ with $\mu^N:=\fracN \delta_{\xb}$ generates a time inhomogeneous jump Feller (Markov) process
${\bf X}^N = (X^{N,1},\ldots ,X^{N,N})$, in $\RR^{Nd}$ and the operator $\Ab[s,x,\mu^N,u]$ generates a time inhomogeneous jump Feller (Markov) process $X$ in $\RR^d$, see Section 2.

Due to the symmetry of the generator $\AF^N$ in (\ref{jumpgenerator}) with respect to permutations of the players, the dynamics for one representative amongst $N$ indistinguishable agents and hence the corresponding time inhomogeneous process is described by $X^{N,1}=X^{N,i}$, $1\le i\le N$, in $\RR^d$. Formally the objective for each of the $N$ players is to find the value function
\begin{equation}
\label{valuefunction-N}
 V^N(t,x;\mub^N_{\geq t})= \sup_{\gamma}  \EXP_{\xb}\!\left[\int_t^T  \!\!\!
      J(s,X^{N,1}_s,\mu^N_s,\gamma(s,X^{N,1}_s;\mub^N_s)) \, ds + V^T(X^{N,1}_T,\mu^N_T) \right]
\end{equation}
on $[0,T]\times \RR^d$, i.e. to maximize her expected payoff over a suitable class of admissible feedback control processes
$\{\gamma(s,X^{N,1}_s;\mub^N_s)\mid \, 0\le s\le T\}\in\mathscr{U}$,
with $\mu^N_s= \fracN \sum_i^N \delta_{X_s^{N,i}}$. Here the cost function
$J: [0,T]\times\RR^d\times\pset_\delta^N\times U \rightarrow \RR$
and the terminal cost function $V^T:\RR^d\times\pset_\delta^N\rightarrow \RR$,
as well as the final time $T$ are given. Suitable conditions on the cost function are given in the main part of the paper.
\vskip 0.1cm
Anticipating the existence of a limiting mean-field $\mub=\{\mu_s\in\MM\vert 0\le s\le T\}$, for each $N$, an explicit expression for the value function is derived by dynamic programming as solution of the HJB equation. In fact, for admissible feedback control processes the HJB equation is well posed. Moreover, the solution coincides with the value function and the resulting optimal feedback control function $\hat\gamma^N(t,\xb;\mu_{\ge t})$ is unique for given start value $\xb\in\RR^{Nd}$ and given $\mu_{\ge t}=\{\mu_t\in\MM\vert t\le s\le T\}$. In addition, Regularity in the parameters is shown, See Section 4 below.

As the number $N$ of players tends to infinity the dynamics of the representative player depends on her own state and the overall respectively limiting distribution of the other players only.
Consequently the limiting kinetic equation for the mean-field with a particular choice of the control law is motivated by the weak form of the one player evolution which reads:
\begin{equation}\label{16*}
 (g,\frac{d}{ds} \mu_s)_{\RR^d}
   = (\Ab[s,\mu_s,\varphi(s)]g,\mu_s)_{\RR^d},
            \qquad\quad
  \mu(0) = \mu,
\end{equation}
for arbitrary $g\in C_\infty(\RR^d)$ and a finite measure $\mu_s\in\MM$ as solution
for $0\le s\le T$. The Ansatz will be verified by the limiting procedure at the last section of the paper. A proof for the uncontrolled system maybe found in \cite{O}.

Existence, uniqueness, and regularity of a non-linear flow to the kinetic equation is shown. Moreover, the Koopman-type propagator to the non-linear flow is investigated and exploited to estimate the difference between the non-linear flow given by the kinetic equation and a representation of the linear $N$-mean-field flow on $\prob^N_\delta\subset\MM$ given by the identification (\ref{x2Px}). The construction exhibits the order of convergence to depend on the regularity of the kernel and the dimension of the underlying Euclidean space.

Mean-field game consistency is said to hold when the number of players $N$ goes to infinity, symmetry being granted, if the problem reduces to the dynamic of the states of a decoupled single player and a pair of two coupled equations, the solutions of which leave each other invariant. The first one is the forward kinetic equation, describing the evolution of the distribution of the population. The second one is a backward Hamilton Jacobi Bellman Equation associated with the value function
\begin{equation}\label{V-MF-limit}
 V(t,x;\mu_{\geq t})= \sup_{\gamma}  \EXP_{\xb}\!\left[\int_t^T  \!\!\!
      J(s,X_s,\mu_s,\gamma(s,X_s;\mu_{\ge s})) \, ds + V^T(X_T,\mu_T) \right]
\end{equation}
on $[0,T]\times \RR^d$, where the linear dynamics, represented  by the process $X$ is given by the Kolmogorov backward equation with generator $\Ab[s,x,\mu_s,\gamma(s)]$ with external measure parameter $\{\mu_t\vert 0\le t\le T\}$, which anticipates the mean-field. The corresponding Markov process $X$ describes the states of one player in the mean-field limit. The value function $V$ stands for the limiting control problem. This is shown by a fixed point argument, the mean-field solving (\ref{16}) and the optimal feedback control being the fixpoints.
In analogy to the control problem for $N$ players, there exists a unique optimal feedback control law $\hat\gamma$ for the optimal payoff $V$ while $\mu_t$ is an external parameter. For the proof of an approximate Nash equilibrium a tagged player with a different preference is introduced while using the optimal control $\hat{\gamma}$ given by the fixpoint theorem otherwise. The difference of the value functions $V^N$
and $V$ for modified model with the additional tagged player reveals the bound $\varepsilon$ for the approximate Nash equilibrium.
\vskip 0.1cm
We conclude the introduction with an overview of how the paper is organized. In Section~2 the dynamics of the game is introduced, in particular the Markovian propagator or time inhomogeneous semi group and the continuous in time Markov processes for one representative player. In Section 3 the $N$ mean-field as well as the limiting dynamics and the generator of the corresponding Koopman-type  propagator are set up. The sensitivity analysis for the associated $N$ mean-field control problems is discussed in Section 4. In the subsequent Section 5
the limit when the number of players tends to infinity is investigated.
Bounds for the approximation error are derived for the dynamics as well
as for the value functions. Finally, the $\varepsilon$-Nash equilibrium is established.


\section{Preliminary Results}

A family of mappings $U^{r,s}$, $T_0\le r\le s\le T$, in a set $S$ is called a propagator in $S$ if $U^{s,s} = id_S$ for all $s\in[T_0,T]$ and the following cocycle property holds:
\begin{equation}
\label{Transition}
  U^{t,s} = U^{t,r}U^{r,s}
\end{equation}
for $T_0\le t\le r\le s\le T$. Here $U^{t,r}U^{r,s}$ is to be interpreted as the composition of mappings.

For linear propagators, respectively evolutions see \cite{VC} Chapter 2, it means the application of linear operators, see \cite{DA} Chapter 3 and \cite{VC} Chapter 2.
In the particular case when the propagator is given by a non-autonomous
jump Feller process $X$ on $\RR^d$, i.e.
\begin{equation}
U^{t,s} f(x):= \EE{f(X_s) | X_t =x}, \quad t\leq s,\quad t,s\in \RR,
\end{equation}
it is well defined on the space of continuous and bounded functions $C_b(\RR^d)$. Moreover, the linear operators $U^{t,r}$ are positivity preserving and satisfy $U^{t,r}1=1 $, $U^{t,t}=id$.

\begin{proposition}
\label{aa}
Let $D$ and $B$, $D\subset B$ be two Banach spaces equipped with a continuous inclusion $D \rightarrow B$ and let
$L_i, i=1,2, t\geq 0,$ be two families of bounded linear operators, which are continuous in time $t$. Assume moreover, that $U^{t,r}_i$ are two propagators in $B$ generated by $L_i, i=1,2$, respectively, i.e. satisfying
\begin{equation}\label{eqn:1a}
\frac{d}{ds}U^{t,s}_i f= U^{t,s}_i L_{i,s} f, \quad
\frac{d}{ds}U^{s,r}_i f=-L_{i,s} U^{s,r}_i f, \ \ t\leq s\leq r,
\end{equation}
for any $f \in D$, which satisfy  $\Norm{U^{t,r}_i}_B\leq c_1, i=1,2$.
Moreover, let $D$ be invariant under $U^{t,s}_1$ and $\Norm{U^{t,s}_1}_D \le c_2$
Then we have \\
i)
\begin{equation*}
U^{t,r}_2-U^{t,r}_1=\int^r_t{U^{t,s}_2(L_s^{2}-L_s^{1})}U^{s,r}_1ds,
\end{equation*}

ii)

\begin{equation*}
\label{1}
\left\|U^{t,r}_2-U^{t,r}_1\right\|_{D\rightarrow B} \leq c_1 c_2 (r-t) \sup \limits_{t\leq s\leq r}\left\|L_s^{2}-L_s^{1}\right\|_{D\rightarrow B}.
\end{equation*}
\end{proposition}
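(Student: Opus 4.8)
The plan is to prove (i) by a Duhamel-type ("variation of constants") identity obtained from differentiating a single composed family of operators, and then to read off the norm estimate (ii) by bounding the resulting integrand pointwise. First I would fix $t\le r$ and $f\in D$ and consider the auxiliary map
\begin{equation*}
\Psi(s) := U^{t,s}_2\, U^{s,r}_1 f, \qquad t\le s\le r.
\end{equation*}
The decisive structural observation is that, since $D$ is invariant under $U^{t,s}_1$, the element $U^{s,r}_1 f$ remains in $D$ for every intermediate $s$, so both evolution equations in (\ref{eqn:1a}) may legitimately be applied to it. Differentiating $\Psi$ by the product rule, substituting the forward equation for $U_2$ (moving the second index $s$) and the backward equation for $U_1$ (moving the first index $s$), the two generator contributions telescope into
\begin{equation*}
\frac{d}{ds}\Psi(s) = U^{t,s}_2 L_{2,s} U^{s,r}_1 f - U^{t,s}_2 L_{1,s} U^{s,r}_1 f = U^{t,s}_2\,(L_s^{2}-L_s^{1})\,U^{s,r}_1 f .
\end{equation*}

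Integrating this identity over $[t,r]$ and inserting the endpoint normalisations $U^{r,r}_1 = id$ and $U^{t,t}_2 = id$ gives $\Psi(r)-\Psi(t) = U^{t,r}_2 f - U^{t,r}_1 f$, which is exactly the claimed formula in (i). For (ii) I would then estimate the $B$-norm of the integrand pointwise in $s$ for $f\in D$: the invariance bound supplies $\Norm{U^{s,r}_1 f}_D \le c_2\Norm{f}_D$; the operator $L_s^{2}-L_s^{1}$ carries this into $B$ with $\Norm{(L_s^{2}-L_s^{1})U^{s,r}_1 f}_B \le \Norm{L_s^{2}-L_s^{1}}_{D\rightarrow B}\,c_2\Norm{f}_D$; and finally the uniform bound $\Norm{U^{t,s}_2}_B \le c_1$ contributes the remaining factor. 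Integrating the pointwise estimate, bounding the time integral by $(r-t)$ times the supremum of $\Norm{L_s^{2}-L_s^{1}}_{D\rightarrow B}$, and taking the supremum over $\Norm{f}_D\le 1$ yields (ii).

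I expect the main obstacle to lie not in the algebra, which is routine telescoping, but in the rigorous justification of the differentiation step: one must verify that $\Psi$ is genuinely strongly differentiable on $[t,r]$, which requires the strong continuity in $s$ of both propagators together with the assumed time-continuity of the families $L_{i,s}$, so that the integrand $s\mapsto U^{t,s}_2(L_s^{2}-L_s^{1})U^{s,r}_1 f$ is continuous in $B$ and the fundamental theorem of calculus for Banach-space-valued functions applies. The invariance hypothesis on $D$ is precisely the ingredient that keeps the forward equation for $U_2$ applicable to the argument $U^{s,r}_1 f$ at every intermediate time, and is therefore essential for the telescoping to close.
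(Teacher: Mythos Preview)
Your argument is correct and is the standard Duhamel/variation-of-constants derivation of this identity. The paper itself does not give a proof of this proposition but simply cites \cite{K2}; the proof found there follows precisely the route you describe, differentiating $s\mapsto U^{t,s}_2 U^{s,r}_1 f$ and integrating, so your proposal matches the intended argument.
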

For a proof see e.g. \cite{K2}.
\vskip 0.2cm
Although canonical we need to specify the topological function spaces we shall be working with. We proceed with the Banach space of real valued continuous functions that vanish at infinity $C_\infty (\RR^d)$, equipped with the norm
$\Norm {f}_\infty= \sup_{x\in\RR^d}\norm{f(x)}$. We moreover denote by $C_{\infty}^1(\RR^d)$ the space of continuously differentiable functions $f\in C_\infty (\RR^d)$ such that the derivative $f^\prime$ belongs to $C_{\infty}(\RR^d)$ and equip it with the norm
$\Norm{f}_{1,\infty}:= \Norm{f}_{C_{\infty}^1(\RR^d)}
                    :=\sup_{x\in\RR^d} \norm{f(x)+f^\prime (x)}$.
The Banach space $(C_{\infty}^1(\RR^d), \Norm{\cdot}_{1,\infty})$
can be continuously imbedded in the larger Banach space
$(C_{\infty}(\RR^d), \Norm{\cdot}_{\infty})$. We finally introduce the Banach space $(C_{\infty}^2(\RR^d),\Norm{\cdot}_{2,\infty})$ of twice continuously differentiable functions $f\in C_\infty (\RR^d)$ such that the first derivative $f^\prime$ and the second derivative
$f^{\prime \prime}$ belong to $C_{\infty}(\RR^d)$, where
$\Norm{f}_{2,\infty}:= \Norm{f}_{C_{\infty}^2(\RR^d)}
  :=\sup_{x}\left( \norm{f(x)} + \norm{f^\prime (x)} + \norm{f^{\prime\prime}(x)} \right)$.
\vskip 0.2cm

Specific to non-linear Markov processes and mean-field theory are topological measure spaces. When equipped with the norm
$\Norm{\mu}^{*}= \sup_{\Norm{f}_\infty \leq 1} \norm{(f,\mu)}$
the space $\MM$ of finite measures on $\RR^d$ constitutes a Banach space see \cite{T} Chapter 9, where
$(f,\mu)= \int{f(x) \mu(dx)}$ represents the duality between the two spaces. We consider three subsets of
$\MM$, the unit ball $\MM_1:=\MM_1(\RR^d)$, the set
$\pset:=\pset(\RR^d)$ of all probability measures on $\RR^d$, and
$\pset^N_\delta$. We endow them with the induced topology of $\MM$. We conclude this paragraph by introducing the set of continuous measure valued functions $C([0,T],\,\MM)$, respectively, $C_\mu=\{\mub\in C([0,T],\,\MM)\mid \mu_0=\mu\}$ with $\mub:=\{\mu_s \mid 0\le s\le T \}$ for later purposes. These are also called measure valued curves.

Next we consider real valued functions or functionals on $\MM$ and $\MM_1$, respectively. In this section we only introduce the Banach space $(C(\MM_1),\Norm{\cdot}_{C(\MM_1)})$ of continuous functions  with $\Norm{F}_{C(\MM_1)}:=\sup _{\mu \in \MM_1} \norm{F(\mu)}$. Moreover, we specify the differentiation rule for functionals on $\MM$.
\begin{definition}
A function $F$ on the space $\MM$ of bounded measures on $\RR^d$
is said to be differentiable at $y\in\MM$ in the direction of $\delta_x$, $x\in\RR^d$, if the variational derivative $\deltab_{[y;x]} F$ (or $\deltab_x F (y)$) of $F$ exists, that is the G\^ateaux derivative ($D_{[y;\delta_x]}$) of $F$ in the direction of $\delta_x$, $x\in\RR^d$:
\[
  \deltab_{[y;x]} F := \lim \limits_{s\rightarrow 0} \frac{F(y+s \delta_x)-F(y)}{s}\ .
\]
\end{definition}
Further topologies in the space of functionals on $\MM_1$ are given in the beginning of Section 3.

We conclude the collection of definitions and notions in this section with the set of control laws $\mathscr{U}:= \{\ub: [0,T]\rightarrow \RR^m \mid u_s\in U \mbox{ for all } s \in [0,T]\}$ where the set $U$ is compact and convex having a $C^2$ boundary.

\vskip 0.5cm

{\bf Hypothesis A}
\vskip 0.5cm

For the kernel $\nu(s, x, \rho, u, dy)$ on $[0,T]\times\RR^d\times\MM_1\times\RR^m\times\MM$ we assume that:

\begin{itemize}
\item(A1) the kernel $\nu$ is positive uniformly bounded on $[0,T]\times\RR^d\times\MM_1\times\RR^m\times\MM$ satisfying
    $\nu(s, x, \rho, u, \{x\})=0$;
\item(A2) the kernel $\nu$ is continuous in t;
\item(A3) the kernel $\nu$ is uniformly Lipshitz continuous in $x$, $\mu$, and $u$ with respect to the topologies specified above;
\item(A4) The second order variational derivative $\deltab^2_{[\rho,h]}\nu(t,x,\rho,u)$   is assumed to exist in $C_\infty$ as a function of $h\in\RR^d$.
\end{itemize}
As a uniformly bounded kernel it is uniformly Lipshitz continuous in $x$, $\mu$, and $u$ with respect to the topologies specified above.
Further regularity conditions required for optimizing preferences are summarized in Section 4 below.

Solutions to the forward Kolmogorov equation associated with the adjoint operator ${\AF^N}^{*}$ to the operator (\ref{jumpgenerator}) can be studied in a strong way or in terms of the associated propagator, see below. Since we are imposing very strong assumptions, uniqueness and regularity of a strong solution follow easiest by exploiting known results from the theory of differential equations on Banach spaces. Optimizing payoffs is wrapped up in (comes as) a stochastic optimal control problem for a Markov process. Again we are fishing for more regularity, however, we need to show that the operator (\ref{jumpgenerator}) generates a Feller propagator, which in particular possesses the Markov property, to retrieve the process.

The strong conditions collected in Hypotheses A and Section 4 insure the existence of Markov processes associated with the generators $\Ab$ and $\AF^N$, respectively, even in the presence of a feedback control law for pure jump-type generators while keeping the measure parameter fixed. Our conditions cover the ones in Pliska (positive $\nu$) \cite{PL} and \cite{BA}. These authors first address the case where the policy, in our case the feedback control, is constant in time and only in a later step introduce the feedback control law.

\begin{proposition}
\label{nonlin-flow}
{\bf\emph{i}}) Let $M$ be an open subset in $\MM_1$ and $U\in\RR^m$ an open bounded control set. Assume that the kernel $\nu(s, x,\rho,u, dy)$ in (\ref{Jumpoperator}) is continuous in $[t_0,T]$ for some $t_0\in\RR$, of type $C^r$ in $x$,  $r\ge 1$, of type $C^p$, $p\ge 1$, in the parameters $\rho\in M$ in G\^ateaux sense, and of type  $C^q$, $q\ge 1$, in $u\in U$. Then so is the unique global flow solving the kinetic equation (\ref{16}).
An analogous statement holds for the initial condition $\mu_0\in\MM_1$. Since equation (\ref{16}) satisfies a linear growth condition the global flow exists on the whole space.

{\bf\emph{ii}}) Let the kernel $\nu(s, x,\rho,u, dy)$ be Lipschitz continuous in $x$ and let the assumptions of part i) be satisfied otherwise. Then the unique global flow solving the kinetic equation is Lipschitz continuous in this parameter and the conclusions of part i) remain valid for the other parameters.

\vskip 0.2cm
\end{proposition}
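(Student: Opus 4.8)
The plan is to read the weak kinetic equation~(\ref{16*}) as an ordinary differential equation in the Banach space $(\MM,\Norm{\cdot}^*)$. Writing $\Ab^*[s,\mu,\varphi(s)]$ for the weak adjoint of the jump generator, the equation takes the form $\frac{d}{ds}\mu_s=F(s,\mu_s)$ with $F(s,\mu):=\Ab^*[s,\mu,\varphi(s)]\mu$, understood in duality against $g\in C_\infty(\RR^d)$. The whole argument then reduces to the Cauchy--Lipschitz (Picard iteration) theory for Banach-space valued ODEs, followed by a sensitivity analysis for the dependence on the parameters.

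First I would establish local existence and uniqueness. By (A1) the kernel is uniformly bounded, so $F(s,\cdot)$ maps bounded subsets of $\MM$ into bounded sets; by (A2) it is continuous in $s$; and by (A3) it is Lipschitz in the measure argument, $\Norm{F(s,\mu)-F(s,\mu')}^*\le C\Norm{\mu-\mu'}^*$ on bounded sets. This last estimate is obtained by splitting $F(s,\mu)-F(s,\mu')$ into the contribution of the difference of measures (controlled by the bounded kernel) and the contribution of the difference of kernels (controlled by the Lipschitz continuity of $\nu$ in $\rho$), each tested against $g$ with $\Norm{g}_\infty\le1$. A standard Picard iteration then yields a unique local flow.

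Next I would upgrade this to a global flow. Since $\nu(s,x,\rho,u,\{x\})=0$ and the generator acts on differences $g(y)-g(x)$, one has $\Ab[s,x,\mu,u]\mathds{1}=0$; taking $g\equiv1$ in~(\ref{16*}) gives $\frac{d}{ds}(\mathds{1},\mu_s)=0$, so total mass is conserved and, positivity being preserved by the jump structure, the flow stays in $\MM_1$ (respectively in $\pset$ if started there). Together with the linear growth of $F$ this rules out blow-up, so the local solution extends to all of $[t_0,T]$, and the resulting uniform bound on $\Norm{\mu_s}^*$ makes the Lipschitz constant uniform in $s$.

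Finally, for the regularity assertions I would differentiate the flow in each argument and show that the derivative solves a linear inhomogeneous \emph{variational} equation whose coefficients are the corresponding derivatives of $\nu$. These variational equations are linear propagators of exactly the type treated in Proposition~\ref{aa}, hence well posed with bounded solutions; an induction on the order of differentiation then transfers $C^r$ smoothness in $x$, $C^p$ in $\rho$, $C^q$ in $u$, and the analogous smoothness in $\mu_0$, from the kernel to the flow. I expect the genuine difficulty to lie in the measure directions: one must justify that the formal G\^ateaux derivatives in $\rho$ and in $\mu_0$ really are derivatives of the flow, i.e.\ interchange differentiation with the time integration and with the $\mu_s$-integral in~(\ref{16*}) despite the self-consistent coupling $\rho=\mu_s$. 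This is precisely where assumption (A4) enters, since the second variational derivative of $\nu$ in $C_\infty$ controls the first-order Taylor remainder uniformly and lets a Gronwall estimate close the induction. Part~\emph{ii}) is then immediate: replacing $C^r$ smoothness in $x$ by Lipschitz continuity keeps the Lipschitz constant of $F$ in $x$ finite, and Gronwall's inequality propagates Lipschitz dependence across $[t_0,T]$ while leaving the treatment of the remaining parameters unchanged.
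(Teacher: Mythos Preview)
Your approach is correct and is essentially the one taken in the paper: treat (\ref{16}) as an ODE in the Banach space $(\MM,\Norm{\cdot}^*)$ and invoke the standard Cauchy--Lipschitz/Picard theory together with smooth dependence on parameters and initial data. The paper does not spell out the argument but simply cites the relevant results in Lang~\cite{L}, Ch.~XVIII (Theorems~2 and~7 and Remark~3 for parameter dependence) and Amann~\cite{AH}, Ch.~2 (global existence under a linear growth condition and the Lipschitz case~ii)); your write-up is precisely the content of those references. Two small remarks: the proposition is stated with its own $C^r$/$C^p$/$C^q$ hypotheses rather than Hypothesis~A, so you should phrase the estimates in terms of those assumptions directly; and the variational equations you obtain are linear nonautonomous equations in $\MM$, solvable by the same Banach-space ODE theory rather than by Proposition~\ref{aa} (which compares two given propagators).
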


The proof of part {\bf\emph{i}}) is a direct consequence of Theorems 2 and 7 in \cite{L} CH. XVIII as well Remark 3 which extends the results to differential equations with parameters. Existence of the global flow in Banach spaces in the presence of a linear growth condition is
treated in Amann \cite{AH}, Chapter 2. This reference also treats the case ii) when we relax the regularity and assume Lipschitz continuity, only.

\begin{corollary}
Let $\nu$ in (\ref{jumpgenerator}) be bounded and as in Proposition \ref{nonlin-flow} i) except for the parameter $x$, which satisfies ii), and consider the linear Kolmogorov backward equations corresponding to the generators $\Ab[t,x,\rho,u]$ and $\AF^N[t,x,\rho,u]$ for $N$ players. Then the corresponding flows in $\RR^d$ and $\RR^{Nd}$, respectively will be Lipschitz continuous in $x$.
\end{corollary}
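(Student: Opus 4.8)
The plan is to exploit that the kernel $\nu$ is \emph{bounded}, so that both $\Ab[s,x,\rho,u]$ and $\AF^N[s,\xb,\rho,u]$ are bounded linear operators on $C_\infty(\RR^d)$ and $C_\infty(\RR^{Nd})$, respectively. By the construction recalled in Section~2 the associated backward Kolmogorov equations then admit unique propagators $U^{t,s}$ that are positivity preserving, conservative ($U^{t,s}1=1$) and hence contractive, $\Norm{U^{t,s}f}_\infty\le\Norm{f}_\infty$. Writing $f_t:=U^{t,T}\Phi$ for the solution with terminal datum $\Phi$, the assertion that ``the flow is Lipschitz in $x$'' amounts to showing that $U^{t,s}$ maps Lipschitz functions into Lipschitz functions with a seminorm that stays bounded on $[0,T]$. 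First I would prove this by a Gr\"onwall argument on the seminorm $\Norm{f}_{\mathrm{Lip}}:=\sup_{x\ne x'}\norm{f(x)-f(x')}/\norm{x-x'}$, propagating $\Norm{f_t}_\infty$ and $\Norm{f_t}_{\mathrm{Lip}}$ simultaneously.

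The key one-step estimate concerns the action of the generator on a Lipschitz function. For $x\ne x'$ I would split
\begin{equation*}
\Ab f(x)-\Ab f(x')=(f(x')-f(x))\,\nu(s,x,\RR^d)+\int_{\RR^d}(f(y)-f(x'))\bigl(\nu(s,x,dy)-\nu(s,x',dy)\bigr).
\end{equation*}
The first term is bounded by $C_\nu\Norm{f}_{\mathrm{Lip}}\norm{x-x'}$, where $C_\nu$ is the uniform bound on the total mass of $\nu$ from (A1). The second term is controlled using only $\norm{f(y)-f(x')}\le 2\Norm{f}_\infty$ together with the Lipschitz continuity of $x\mapsto\nu(s,x,\cdot)$ in the total-variation norm $\Norm{\cdot}^{*}$ granted by (A3), giving $2L_\nu\Norm{f}_\infty\norm{x-x'}$; hence $\Norm{\Ab f}_{\mathrm{Lip}}\le C_\nu\Norm{f}_{\mathrm{Lip}}+2L_\nu\Norm{f}_\infty$. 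I expect this mixed estimate --- the fact that the non-local part of $\Ab$ turns the Lipschitz seminorm into the sup-norm --- to be the main obstacle, since it forces one to propagate the pair $(\Norm{f_t}_\infty,\Norm{f_t}_{\mathrm{Lip}})$ jointly and to use the contraction property to close the loop, rather than estimating the Lipschitz seminorm in isolation.

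With this in hand the conclusion follows quickly. Using the Duhamel form $f_t=\Phi+\int_t^T\Ab[s]f_s\,ds$ of the bounded backward equation, taking Lipschitz seminorms and inserting $\Norm{f_s}_\infty\le\Norm{\Phi}_\infty$ yields
\begin{equation*}
\Norm{f_t}_{\mathrm{Lip}}\le \Norm{\Phi}_{\mathrm{Lip}}+2L_\nu\Norm{\Phi}_\infty(T-t)+C_\nu\int_t^T\Norm{f_s}_{\mathrm{Lip}}\,ds.
\end{equation*}
Gr\"onwall's lemma, integrated backward from $T$, then gives $\Norm{f_t}_{\mathrm{Lip}}\le\bigl(\Norm{\Phi}_{\mathrm{Lip}}+2L_\nu\Norm{\Phi}_\infty T\bigr)e^{C_\nu(T-t)}$, which is the desired Lipschitz continuity in $x$ with a constant uniform on $[0,T]$.

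For the $N$-player operator I would view $\AF^N=\sum_{i=1}^N\Ab^{N,i}$ as the generator of a bounded pure-jump process on $\RR^{Nd}$ whose total jump kernel is the superposition $\sum_{i=1}^N\nu(s,x_i,\cdot)$ of the one-coordinate kernels, each charging only the $i$-th component. This kernel is again uniformly bounded (total mass $\le NC_\nu$) and Lipschitz in $\xb\in\RR^{Nd}$ by (A3), using $\norm{x_i-x_i'}\le\norm{\xb-\xb'}$; the identical splitting and Gr\"onwall argument then apply verbatim, with $NC_\nu$ in the exponent, establishing Lipschitz continuity of the $N$-mean-field flow in $\xb$ for each fixed $N$ (the $N$-dependent constant being harmless here). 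Alternatively, one may regard the bounded linear equation as an abstract ODE on the Banach space of Lipschitz functions and invoke the same existence/regularity theory as in Proposition~\ref{nonlin-flow}; the direct seminorm estimate above, however, makes the roles of (A1) and (A3) transparent and supplies the explicit constant.
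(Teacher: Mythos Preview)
Your argument is correct. The paper, however, gives no explicit proof of this Corollary at all: it is stated immediately after Proposition~\ref{nonlin-flow} and is meant to be read as a direct consequence of the abstract ODE-in-Banach-space theory cited there (Lang~\cite{L}, Amann~\cite{AH}). In other words, the paper's ``proof'' is precisely the alternative you mention in your last sentence --- regard the linear backward equation as an ODE on the Banach space $C^{0,1}_\infty(\RR^d)$ (respectively $C^{0,1}_\infty(\RR^{Nd})$) with a bounded, Lipschitz right-hand side, and read off Lipschitz dependence from the general flow theorems.

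Your direct route via the splitting of $\Ab f(x)-\Ab f(x')$ and a Gr\"onwall argument on the pair $(\Norm{f_t}_\infty,\Norm{f_t}_{\mathrm{Lip}})$ is genuinely different and more transparent: it isolates exactly where boundedness (A1) and the Lipschitz-in-$x$ property of $\nu$ in total variation (A3) enter, and it produces the explicit constant $\bigl(\Norm{\Phi}_{\mathrm{Lip}}+2L_\nu T\Norm{\Phi}_\infty\bigr)e^{C_\nu(T-t)}$, which the abstract argument does not. The price is that you must propagate both seminorms simultaneously, as you correctly anticipated; the paper's abstract invocation hides this inside the Banach-space norm. Both approaches are valid here because the generator is bounded, so the backward equation really is a well-posed ODE on either function space.
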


\begin{lemma}
\label{aab}
If Hypothesis A holds,
for any given $\rho\in\MM_1$ and $u\in U$ the bounded time-dependent operators $\Ab[t,x,\rho,u]$ and $\AF^N[t,x,\mu^N,u]$ in \ref{Jumpoperator} and \ref{jumpgenerator} generate Feller Markov backward propagators $\Lambda^{t,s}[x,\rho,u]$ and $\psi_N^{t,s}[x,\rho,u]$ on $C_\infty(\RR^d)$, in particular:
\[
  \Norm{\Lambda^{t,s}}\leq c_3, \quad 0\le t\leq s \le T ,
\]
where $\Norm{\cdot}$ denotes the operator norm.
\end{lemma}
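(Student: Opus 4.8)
The plan is to use that, under Hypothesis A, each $\Ab[s,\cdot,\rho,u]$ is a \emph{bounded} operator on $C_\infty(\RR^d)$ with norm controlled uniformly in $s,\rho,u$, so that generation reduces to a perturbation--series construction rather than an unbounded Hille--Yosida argument. I would first split
\[
 \Ab[s,x,\rho,u]f(x)=B_sf(x)-\lambda_s(x)f(x),\quad B_sf(x):=\int_{\RR^d}f(y)\,\nu(s,x,\rho,u,dy),\quad \lambda_s(x):=\nu(s,x,\rho,u,\RR^d),
\]
and invoke (A1) to bound $\Norm{\lambda_s}_\infty$ and $\Norm{B_s}$, hence $\sup_s\Norm{\Ab[s]}<\infty$; (A3) ensures $B_s$ and $\Ab[s]$ leave $C_\infty(\RR^d)$ invariant, while (A2) gives norm--continuity of $s\mapsto\Ab[s]$. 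The same three observations apply to $\AF^N=\sum_{i=1}^N\Ab^{N,i}$ in (\ref{jumpgenerator}) as a finite sum of such operators on $C_\infty(\RR^{Nd})$.

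Second, I would realise the backward propagator $\Lambda^{t,s}$ as the time--ordered exponential $\Texp$ of $\Ab$, i.e. the unique solution of the propagator equations (\ref{eqn:1a}) satisfying the cocycle identity (\ref{Transition}). Writing $\Lambda^{t,s}_0f(x)=\exp\!\bigl(-\int_t^s\lambda_r(x)\,dr\bigr)f(x)$ for the propagator generated by $-\lambda$, which is positivity preserving and a contraction, Duhamel's formula gives the norm--convergent Dyson series
\[
 \Lambda^{t,s}=\Lambda^{t,s}_0+\sum_{n\ge1}\int_{t\le r_1\le\cdots\le r_n\le s}\Lambda^{t,r_1}_0B_{r_1}\Lambda^{r_1,r_2}_0\cdots B_{r_n}\Lambda^{r_n,s}_0\,dr_1\cdots dr_n .
\]
Convergence is guaranteed by the uniform bound on $\Norm{B_s}$, and since every factor $\Lambda^{\cdot,\cdot}_0$ and $B_{r_i}$ is a positive operator, each summand is positive and $\Lambda^{t,s}$ is positivity preserving.

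Third, for the Feller properties I would note that $\Ab[s]1=0$, whence $\Lambda^{t,s}1=1$ (conservativeness); combined with positivity this yields the stated bound, since $\Norm{f}_\infty\pm f\ge0$ forces $\Norm{f}_\infty\pm\Lambda^{t,s}f\ge0$ and therefore $\Norm{\Lambda^{t,s}f}_\infty\le\Norm{f}_\infty$, i.e. one may take $c_3=1$. Equivalently $\Ab$ obeys the positive maximum principle, because at a maximum point $x_0$ of $f$ one has $\Ab f(x_0)=\int(f(y)-f(x_0))\,\nu(s,x_0,\rho,u,dy)\le0$. Strong continuity of $(t,s)\mapsto\Lambda^{t,s}$ is automatic for bounded generators, and the passage from a conservative positivity--preserving Feller propagator to the associated time--inhomogeneous Markov process (hence the Markov property) is the standard Kolmogorov--extension correspondence recorded in \cite{K3,K4}. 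The assertion for $\AF^N$ on $C_\infty(\RR^{Nd})$ then follows verbatim, producing the Feller propagator $\psi_N^{t,s}$.

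The main obstacle I expect is the single point that $B_s$ (and thus $\Ab[s]$) genuinely maps $C_\infty(\RR^d)$ into itself, that is, that $B_sf$ vanishes at infinity; this is exactly where the uniform boundedness (A1) together with the continuity/Lipschitz content of (A3) must be used, through an approximation of $f\in C_\infty(\RR^d)$ by a function of compact support and a tightness estimate on $\nu(s,x,\rho,u,\cdot)$ as $\norm{x}\to\infty$. Once invariance of $C_\infty(\RR^d)$ and norm--continuity in time are secured, every remaining step --- convergence of the series, positivity, conservativeness, the contraction bound and the Markov property --- is routine.
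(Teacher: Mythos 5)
Your argument is correct, but it takes a genuinely different route from the paper. The paper does not construct the propagator by hand: it cites the abstract generation theorem for Feller semigroups (Ethier--Kurtz, Theorem~2.2 and Proposition~2.4 of Chapter~4) and verifies its three hypotheses --- dense domain (here all of $C_\infty(\RR^d)$, since the operators are bounded), the positive maximum principle, and density of the range of $\lambda-\Ab$ via boundedness of the resolvent --- after which the process itself comes from Theorem~2.7 of the same chapter. You instead exploit boundedness of the kernel directly, splitting $\Ab[s]=B_s-\lambda_s$ and building $\Lambda^{t,s}$ as a norm-convergent Dyson/Duhamel series around the multiplication propagator generated by $-\lambda_s$. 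Your construction is more explicit and more elementary: it makes positivity preservation termwise obvious, handles the time-inhomogeneity without any detour through autonomous semigroup theory, and yields the sharp constant $c_3=1$ from conservativeness plus positivity, whereas the paper's route delivers the Feller and Markov properties as a package from the cited theorem at the cost of being less transparent about where each hypothesis of Hypothesis~A enters. The one step you rightly single out as delicate --- that $B_sf$ vanishes at infinity, so that $C_\infty(\RR^d)$ is actually invariant --- is not resolved by the paper either: conditions (A1)--(A4) bound and Lipschitz-control the kernel but do not literally assert tightness of $\nu(s,x,\rho,u,\cdot)$ as $\norm{x}\to\infty$, and the paper's claim $\mathfrak{D}(\Ab)=C_\infty(\RR^d)$ quietly presupposes exactly this mapping property. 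So your proof is not missing anything the paper supplies; it merely makes explicit an assumption both arguments need.
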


The proof is a direct consequence of \cite{TK} in Theorem 2.2 and Proposition 2.4  of Chapter 4, see also \cite{K4} Chapter 3 and references therein, as will be shown subsequently since the measure $\mu^N$ depends on $x$ and the notion of continuity becomes an issue in contrast to our previous work \cite{BHK}. The generators are conservative, i.e.
$\Ab[t,x,\rho,u] 1= 0  = \AF^N[t,x,\mu^N,u] 1$, and the generators satisfy a)-c) in Theorem 2.2, namely
a) the domains of the bounded operators are dense in $C_\infty(\RR^d)$, in fact $\mathfrak{D}(\Ab[t,x,\rho,u])= C_\infty(\RR^d)= \mathfrak{D}(\AF^N[t,x,\mu^N,u])$; moreover
\vskip 0.1cm
b) the generators satisfy a positive maximum principle, i.e. for $\nu\ge 0$, $f\in C_\infty(\RR^d)$ and $x_0$ such that $f(x_0)= sup_{x}f(x)$ the inequality $\int \nu(t,x,dy)(f(y)-f(x_0))\le 0$ implies for $\lambda > 0$
\begin{displaymath}
 \Norm{\lambda f - \Ab[t,x,\mu^N,u] f}\ge \lambda f(x_0) - \Ab[t,x_0,\mu^N,u] f(x_0) \ge  \lambda f(x_0)
 \ge \lambda \Norm{f},
\end{displaymath}
and analogously for $\AF^N[t,x,\mu^N,u]$; finally
\vskip 0.1cm
c) for $\lambda > \max\{\Norm{\Ab[t,x,\rho,u]},\Norm{\AF^N[t,x,\mu^N,u]} \}$ the operators
$(\lambda -\Ab[t,x,\rho,u])^{-1}$ and $(\lambda -\Ab[t,x,\mu^N,u])^{-1}$
are bounded in $C_\infty(\RR^d)$ hence the ranges of $(\lambda -\Ab[t,x,\rho,u])^{-1}$ and $(\lambda -\Ab[t,x,\mu^N,u])^{-1}$ are dense in $C_\infty(\RR^d)$,
which finishes the proof.
\begin{proposition}
\label{process-N}
Under Hypothesis A and for any given $\rho\in\MM_1$ and $u\in U$, there exist a jump Markov processes generated by $\Ab[t,x,\rho,u]$ and \\ $\AF^N[t,x,\mu^N,u]$ in \ref{Jumpoperator} and \ref{jumpgenerator}, respectively, with sample paths in the space of cadlag functions.
\end{proposition}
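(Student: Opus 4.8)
The plan is to build directly on Lemma \ref{aab}, which already furnishes the Feller backward propagators $\Lambda^{t,s}[x,\rho,u]$ and $\psi_N^{t,s}[x,\rho,u]$ on $C_\infty(\RR^d)$ generated by $\Ab$ and $\AF^N$. What remains is to pass from a propagator of operators to an honest stochastic process with c\`adl\`ag trajectories. Since the generators in \eqref{Jumpoperator} and \eqref{jumpgenerator} are \emph{bounded} integral operators of pure jump type, the propagators admit the explicit time-ordered exponential representation $\Lambda^{t,s}=\Texp\{\int_t^s \Ab[\tau,\cdot,\rho,u]\,d\tau\}$, which renders both the probabilistic construction and the path regularity transparent. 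I would establish existence and the Markov property via the finite-dimensional distributions, and then obtain the c\`adl\`ag property directly from the jump structure.

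First I would extract the transition kernels. By the positivity preservation and conservativity $\Lambda^{t,s}1=1$ recorded in Lemma \ref{aab}, the Riesz--Markov representation $\Lambda^{t,s}f(x)=\int_{\RR^d}f(y)\,P^{t,s}(x,dy)$ defines genuine probability kernels $P^{t,s}(x,\cdot)$. The cocycle property \eqref{Transition} for $\Lambda$ translates verbatim into the Chapman--Kolmogorov identity $P^{t,s}=\int P^{t,r}(\cdot,dz)P^{r,s}(z,\cdot)$ for $t\le r\le s$. A consistent family of finite-dimensional distributions is thereby specified, so Kolmogorov's extension theorem produces a Markov process $X$ with these marginals; applying the same argument to $\psi_N^{t,s}$ on $\RR^{Nd}$ handles the $N$-player process $\mathbf X^N$.

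For the c\`adl\`ag property I would exploit the pure-jump structure rather than an abstract regularisation theorem. By (A1) the total intensity $\lambda(s,x):=\nu(s,x,\rho,u,\RR^d)$ is uniformly bounded on $[0,T]$, so the process may be realised by the standard jump--hold mechanism: starting from $x$ at time $t$, it waits a time-inhomogeneous holding time governed by $\lambda$, then jumps to $y$ distributed according to $\nu(s,x,\rho,u,dy)/\lambda(s,x)$, and repeats. Uniform boundedness of $\lambda$ rules out explosion on the finite horizon $[0,T]$, so almost surely only finitely many jumps occur on $[t,T]$; the resulting trajectory is piecewise constant and hence c\`adl\`ag by construction. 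A direct computation of the generator of this explicit process returns $\Ab$ (respectively $\AF^N$), whence it is indistinguishable in law from the propagator process of the previous step.

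The main obstacle is not existence but the interplay of time-inhomogeneity with the $x$-dependence of $\mu^N$ flagged after Lemma \ref{aab}: for the $N$-site generator $\AF^N=\sum_i \Ab^{N,i}$ the jump mechanism of one coordinate is coupled to the empirical measure $\mu^N=\fracN\sum_k\delta_{x_k}$, i.e.\ to the current positions of all coordinates. One must therefore verify that the holding-time and jump-distribution maps are jointly measurable in $(s,\xb)$ and that the superposition of the $N$ coupled single-site jump mechanisms is well defined and consistent with the time-ordered exponential. Here continuity in time (A2) together with the Lipschitz bounds (A3) secures the required joint measurability and strong continuity, closing the identification of the explicit jump process with the Feller propagator and completing the proof.
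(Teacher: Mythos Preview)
Your argument is correct. The paper, by contrast, gives no argument at all: it simply cites Theorem~2.7 of Chapter~4 in Ethier--Kurtz \cite{TK}, which produces a c\`adl\`ag Markov process directly from the Feller propagator already established in Lemma~\ref{aab}. Your route is genuinely different and more constructive: you first manufacture the process via Kolmogorov's extension theorem from the transition kernels, and then obtain the c\`adl\`ag modification not from the abstract Feller regularisation but from the explicit jump--hold realisation available because the total intensity is uniformly bounded. What your approach buys is transparency---the reader sees exactly why the paths are piecewise constant and why there is no explosion---at the cost of having to argue the identification of the two constructions and to handle the measurability issues for the coupled $N$-player mechanism, which you correctly flag. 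The paper's one-line citation is shorter and covers the time-inhomogeneous Feller setting uniformly, but it hides the elementary structure that your boundedness argument exposes.
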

For a proof, see \cite{TK}, Theorem 2.7 of Chapter 4.


\section{Propagators in the Space of Bounded Measures}

In the sequel we investigate the evolution of the laws associated with the $N$-player dynamics and the one of the limiting law as the number of players tends to infinity. They are linked by a weak law of large numbers for processes of pure jump type which has been derived by Oelschl\"{a}ger \cite{O} Theorem 2 in the absence of controls. Oelschl\"{a}ger's proof meets the requirement of jump processes, that only have cadlag paths, by choosing a different topology compared to the diffusion case. We proceed by recalling the form of the limiting, so called kinetic equation for constant control parameter.

\begin{remark}
Under the conditions on the initial distributions given in \cite{O}
the laws of the $N$-player processes $X^N$ converge weakly to the Dirac measure concentrated at the solution $\mu_t$ of the integral equation
\begin{displaymath}
 (f,\mu_t)= (f,\mu_0)+\int_0^t (A[s,\cdot,\mu_s,u]f,\mu_s)\, ds
\end{displaymath}
where $f\in C_\infty(\RR^d)$. Moreover $\mu_t$ is the unique deterministic solution of
\begin{equation}
\label{16}
    \frac{d}{dt}\mu_t=\Ab^*[t,\mu_t,u]\mu_t, \ \ \mu_0=\mu, \ \ t \in [0,T]\ ,
\end{equation}
where
\begin{equation}
  \Ab^*[t,\mu_t,u]\mu_t = \int_{\RR^d} \nu(t,y,\mu_t,u,d(x+y)) \mu(dy)
                           - \nu(t,x,\mu_t,u,dy)\mu(dx) \ .
\end{equation}
\end{remark}
For the sake of more comprehensive notation we fix the control parameter and drop it while investigating the limiting dynamic.
\begin{corollary}
\label{b}
Let Hypotheses A be satisfied.\\
{\bf\emph{i}}) Then the non-linear flow $\alpha$ induced by the kinetic equation (\ref{16}) is twice continuously differentiable in $\rho$ in variational sense and Lipschitz continuous  in the control parameter $u$.\\
{\bf\emph{ii}}) Let $\nu$ be uniformly Lipschitz continuous in the measure parameter $\rho$, only. For all $\mu, \eta \in \MM_1$ the unique solution to equation (\ref{16}) given by Proposition \ref{nonlin-flow} is Lipschitz continuous in the initial data i.e:
\begin{equation}
\label{17}
  \Norm{\alpha(0,t,\mu) - \alpha(0,t,\eta)}^{*} \leq
          C(T)\Norm{\mu_0-\eta_0}^*.
\end{equation}
\end{corollary}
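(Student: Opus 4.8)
The plan is to derive both parts from Proposition~\ref{nonlin-flow}, after checking that Hypothesis~A supplies exactly the regularity required there; for part \emph{ii}) the dependence on the initial datum is then made quantitative by a Gronwall estimate in the norm $\Norm{\cdot}^*$.

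For part \emph{i}), I would first verify that Hypothesis~A places equation (\ref{16}) within the scope of Proposition~\ref{nonlin-flow}. By (A1) the kernel is uniformly bounded, so the right-hand side $\Ab^*[t,\mu_t,u]\mu_t$ is a bounded vector field on $\MM$, in particular of linear growth, which yields the global flow $\alpha$; (A2) furnishes continuity in $t$. The regularity in the remaining arguments transfers verbatim: (A4) gives existence of the second variational derivative in $\rho$, i.e. $p=2$ in Proposition~\ref{nonlin-flow}~\emph{i}), so $\alpha$ is twice G\^ateaux differentiable in $\rho$; and (A3), which provides only uniform Lipschitz continuity in $u$, is covered by the relaxed variant Proposition~\ref{nonlin-flow}~\emph{ii}) applied to the parameter $u$ in place of $x$, yielding Lipschitz dependence of $\alpha$ on $u$.

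For part \emph{ii}) I would pass to the weak form of (\ref{16}), namely $(g,\mu_t)=(g,\mu_0)+\int_0^t(\Ab[s,\cdot,\mu_s]g,\mu_s)\,ds$ for $g\in C_\infty(\RR^d)$, using the duality $(g,\Ab^*[s,\mu_s]\mu_s)=(\Ab[s,\cdot,\mu_s]g,\mu_s)$. Setting $\mu_t=\alpha(0,t,\mu)$, $\eta_t=\alpha(0,t,\eta)$ and subtracting, I split the integrand as
\[
(\Ab[s,\cdot,\mu_s]g,\mu_s)-(\Ab[s,\cdot,\eta_s]g,\eta_s)
 =(\Ab[s,\cdot,\mu_s]g,\mu_s-\eta_s)
  +\big((\Ab[s,\cdot,\mu_s]-\Ab[s,\cdot,\eta_s])g,\eta_s\big).
\]
The first term is bounded by $\Norm{\Ab[s,\cdot,\mu_s]g}_\infty\,\Norm{\mu_s-\eta_s}^*\le c\,\Norm{g}_\infty\,\Norm{\mu_s-\eta_s}^*$, thanks to boundedness of the operator from (A1). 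For the second term I would invoke the uniform Lipschitz continuity of $\nu$ in $\rho$ to obtain $\Norm{(\Ab[s,\cdot,\mu_s]-\Ab[s,\cdot,\eta_s])g}_\infty\le L\,\Norm{g}_\infty\,\Norm{\mu_s-\eta_s}^*$, and then pair against $\eta_s$; since the generator is conservative ($\Ab 1=0$), testing (\ref{16}) with $g=1$ shows the total mass is preserved, so $\eta_s$ stays in the unit ball and the pairing is bounded by $L\,\Norm{g}_\infty\,\Norm{\mu_s-\eta_s}^*$. Taking the supremum over $\Norm{g}_\infty\le1$ gives
\[
\Norm{\mu_t-\eta_t}^*\le\Norm{\mu_0-\eta_0}^*+C'\int_0^t\Norm{\mu_s-\eta_s}^*\,ds,
\]
and Gronwall's inequality yields (\ref{17}) with $C(T)=e^{C'T}$.

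The main obstacle is the second term of the split, i.e. estimating the difference of the two nonlinear generators uniformly in the state variable; this is precisely where the Lipschitz hypothesis on $\nu$ in the measure argument is indispensable, and where one must ensure the pairing against $\eta_s$ stays bounded, which is secured by mass conservation confining the curve $\eta_s$ to a fixed ball of $\MM$. The remaining points --- justifying the weak formulation, differentiating under the integral, and applying Gronwall in the Banach space $\MM$ --- are routine once boundedness from (A1) is in hand.
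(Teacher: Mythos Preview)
Your argument is correct and follows essentially the same route as the paper: part \emph{i}) is reduced to Proposition~\ref{nonlin-flow} by matching Hypothesis~A to the required regularity, and part \emph{ii}) is a Gronwall estimate after the same add--subtract splitting of the nonlinear term. The only cosmetic differences are that the paper works directly with $\Ab^*$ in the strong form on $\MM$ rather than via duality against test functions, and bounds the factor $\Norm{\mu_s}^*$ by a preliminary Gronwall estimate $\Norm{\mu_t}^*\le\Norm{\mu_0}^*e^{cT}$ instead of invoking mass conservation; your use of $\Ab 1=0$ is slightly cleaner but note that $1\notin C_\infty(\RR^d)$, so strictly speaking you are using the extension of the conservative generator to $C_b(\RR^d)$, which the paper records just before Proposition~\ref{process-N}.
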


\begin{proof} Result {\bf\emph{i}}) is a direct consequence of Proposition \ref{nonlin-flow} with $\nu$
Lipschitz continuous in $x$, $p=2$ and $q=1$.\\

{\bf\emph{ii}}) Equation \ref{16} is equivalent to:
\begin{equation}
\label{16-integral}
\mu_t = \mu_0+ \int_0^t \Ab^*[s,\mu_s] \mu_s ds.
\end{equation}
Using Gronwall's lemma in Banach space we find
\begin{equation}
\label{16-bound}
\left\|\mu_t\right\|^{*} \le \left\|\mu_0\right\|^{*} + \int_0^t \left\|\Ab^*[s,\mu_s]\right\| \left\|\mu_s\right\|^{*} ds
 \le \left\|\mu_0\right\|^{*} e^{cT} \leq \infty.
\end{equation}
For the difference
\begin{displaymath}
\mu_t-\eta_t= \mu_0 -\eta_0 +\int_0^t \Ab^*[s,\mu_s] \mu_s  - \Ab^*[s,\eta_s] \eta_s  ds,
\end{displaymath}
since the uniformly bounded operator $\Ab$ is uniformly Lipschitz continuous in $\rho$, Gronwall's Lemma reveals.

\begin{eqnarray}
\lefteqn{\Norm{\mu_t -\eta_t}^{*} }\nonumber\\
 &\le& \left\|\mu_0 -\eta_0\right\|^{*}
                   +\int_0^t  \left\|\Ab^*[s,\mu_s]
                   -\Ab^*[s,\eta_s]\right\|\Norm{\mu_s}^{*}
+  \left\|\Ab^*[s,\eta_s]\right\|\left\|\mu_s -\eta_s\right\|^{*} ds \nonumber\\
 &\le& \left\|\mu_0 -\eta_0\right\|^{*} + \int_0^t [c_1 \left\|\mu_s\right\|^* +c_2] \left\|\mu_s -\eta_s\right\|^{*} ds \le {\Norm{\mu_0 -\eta_0}^{*}} e^{CT}.
\end{eqnarray}
\end{proof}

We introduce the following topologies for spaces of functionals on the set $\MM_1$ of measures. Let the subsets $C^k(\MM_1)$ of functionals $F$ such that
$\Norm{F}_{C^k (\MM_1)} := \Norm{F}_{C(\MM_1)}
 + \sup_{\mu\in\MM_1}\sum_{\ell=1}^k\Norm{\deltab_{[\mu]}^\ell F}_{C_\infty(\RR^{\ell d})}
$ is finite. They also constitute Banach spaces.

Given Hypotheses A let $\alpha(t,s), \quad 0\leq t \leq s \leq T,$ be the flow in the Banach space $(\MM_1(\RR^d),\Norm{\cdot}^{*})$ guaranteed by Proposition \ref{nonlin-flow}, then we define
\begin{equation}
\label{Koopman}
(\Phi^{t,s} F)(\mu):=F(\alpha(t,s,\mu))
\end{equation}
for all $F\in C^1(\MM_1)$. In the sequel we show that $\Phi^{t,s}$ generalizes the Koopman operator introduced in \cite{LM} as the adjoint of a Frobenius Perron operator on a measure space respectively by \cite{RS} in a Hilbert space setting. We call $\Phi^{t,s}$ the Koopman-type propagator, or shortly Koopman propagator, induced by $\alpha(t,s)$.
\begin{remark}
Other than in $\RR^k$ as in \cite{BHK}, where there the Lebesgue measure is the natural candidate, there is no natural choice which turns $\MM$ into a measure space. Therefore, the Koopman propagator (\ref{Koopman}) and it's properties are specified subsequently.
\end{remark}

\begin{lemma}
 \label{phi_C} Let Hypothesis A be satisfied, then $\Phi^{t,s},\ 0\le t\le s \le T$, has the following properties:
\begin{itemize}
\item[1)] $\Phi^{t,s}$ is a linear propagator in $C(\MM_1)$.
\item[2)] $\Phi^{t,s}$ is a contraction propagator in $(C(\MM_1),\Norm{\cdot}_{C(\MM_1)})$.
\item[3)] The propagator $\Phi^{t,s}$ is strongly continuous with respect to $\Norm{\cdot}_{C(\MM_1)}$.
\end{itemize}
\end{lemma}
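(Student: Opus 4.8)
The plan is to verify the three assertions in turn, reducing each to a property of the non-linear flow $\alpha(t,s,\cdot)$ furnished by Proposition~\ref{nonlin-flow}. For part~1), linearity in $F$ is immediate from (\ref{Koopman}) because the argument $\alpha(t,s,\mu)$ does not depend on $F$: for scalars $a,b$ and $F,G\in C(\MM_1)$ we have $\Phi^{t,s}(aF+bG)(\mu)=(aF+bG)(\alpha(t,s,\mu))=a(\Phi^{t,s}F)(\mu)+b(\Phi^{t,s}G)(\mu)$. The identity $\Phi^{s,s}=\mathrm{id}$ follows from $\alpha(s,s,\mu)=\mu$. For the cocycle property (\ref{Transition}) I would first record the flow identity $\alpha(t,s,\mu)=\alpha(r,s,\alpha(t,r,\mu))$ for $t\le r\le s$, which is a consequence of the uniqueness of the global flow of (\ref{16}) under Hypothesis~A (both sides, viewed as functions of $s\ge r$, solve (\ref{16}) with the common value $\alpha(t,r,\mu)$ at time $r$, hence coincide). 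Then $(\Phi^{t,r}\Phi^{r,s}F)(\mu)=(\Phi^{r,s}F)(\alpha(t,r,\mu))=F(\alpha(r,s,\alpha(t,r,\mu)))=F(\alpha(t,s,\mu))=(\Phi^{t,s}F)(\mu)$, as required.

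For part~2), the essential point is that $\alpha(t,s,\cdot)$ leaves $\MM_1$ invariant. Since the generators are conservative, $(\Ab[t,\cdot,\rho,u]1,\mu)=0$, the flow of (\ref{16}) preserves the total mass of a positive measure, so any $\mu\in\MM_1$ is carried into a measure of the same (hence at most unit) mass; thus $\alpha(t,s,\MM_1)\subseteq\MM_1$. Consequently $\Norm{\Phi^{t,s}F}_{C(\MM_1)}=\sup_{\mu\in\MM_1}\norm{F(\alpha(t,s,\mu))}\le\sup_{\nu\in\MM_1}\norm{F(\nu)}=\Norm{F}_{C(\MM_1)}$, which is the claimed contraction estimate (and, together with $\Phi^{s,s}=\mathrm{id}$, shows it is sharp).

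For part~3), strong continuity of the propagator in $\Norm{\cdot}_{C(\MM_1)}$ means continuity of $(t,s)\mapsto\Phi^{t,s}F$ for each fixed $F$, which I would reduce to a uniform-in-$\mu$ continuity-in-time estimate for the flow. From the integral form (\ref{16-integral}) together with the uniform boundedness of $\Ab^{*}$ under Hypothesis~A one obtains $\sup_{\mu\in\MM_1}\Norm{\alpha(t,s,\mu)-\mu}^{*}\le C\norm{s-t}$, so the flow tends to the identity uniformly in $\mu$ as $s\to t$; invoking the cocycle and contraction properties already established, joint continuity then reduces to controlling $\sup_{\mu\in\MM_1}\norm{F(\alpha(t,s,\mu))-F(\mu)}$. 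The hard part will be precisely this last step: $F$ is only assumed continuous on $\MM_1$, which is not compact in the norm topology, so norm-closeness of $\alpha(t,s,\mu)$ to $\mu$ does not by itself force $\norm{F(\alpha)-F(\mu)}$ to be uniformly small. I expect to resolve it by passing to the weak-$*$ topology, under which $\MM_1$ is the unit ball of $\MM=C_\infty(\RR^d)^{*}$ and hence compact by Banach--Alaoglu, and metrizable because $C_\infty(\RR^d)$ is separable; fixing a compatible metric $d$ with $d\le\Norm{\cdot}^{*}$ on $\MM_1$, the estimate above yields $d(\alpha(t,s,\mu),\mu)\le C\norm{s-t}$ uniformly in $\mu$, every $F\in C(\MM_1)$ is $d$-uniformly continuous by compactness, and therefore $\sup_{\mu}\norm{F(\alpha(t,s,\mu))-F(\mu)}\le\omega_F(C\norm{s-t})\to 0$, with $\omega_F$ the modulus of continuity of $F$. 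Reconciling the norm estimate for the flow with the weak compactness needed to upgrade mere continuity of $F$ to uniform continuity — which in effect presumes $C(\MM_1)$ is read with respect to the weak-$*$ topology — is, I expect, the crux of the lemma.
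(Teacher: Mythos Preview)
Your arguments for parts 1) and 2) are essentially the paper's, only spelled out more fully: the paper invokes the definition plus Proposition~\ref{nonlin-flow} for linearity and well-definedness, and writes down the same supremum inequality for the contraction. Your added remark that $\alpha(t,s,\cdot)$ must keep $\MM_1$ inside $\MM_1$ (which you justify via conservativity and mass preservation for positive measures) makes explicit a point the paper leaves implicit.

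For part 3) your route is genuinely different and considerably more careful. The paper simply writes ``Hence 3) follows'' after the contraction estimate, without further justification. You instead derive a uniform-in-$\mu$ time-continuity bound for the flow from the integral form (\ref{16-integral}) and then upgrade pointwise to uniform convergence by passing to the weak-$*$ topology on $\MM_1$, where Banach--Alaoglu gives compactness and hence uniform continuity of $F$. This buys you an honest proof of strong continuity; the paper's version leans on the reader to fill the gap. You are also right to flag the topological wrinkle: the paper equips $\MM_1$ with the norm topology, under which it is not compact, yet later (in Section~4) invokes Banach--Alaoglu for the weak topology, so your reading of $C(\MM_1)$ against the weak-$*$ topology is consistent with how the paper actually uses these spaces.
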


\begin{proof}
When inserting the definition one immediately sees that $\Phi^{t,s}$ is linear and well defined on
$C(\MM_1)$ due to Proposition \ref{nonlin-flow} and hence is a linear operator. Exploiting the regularity of
the flow $\alpha$ reveals 1). 2) follows form the following observation
\begin{equation}
\begin{split}
&\Norm{\Phi^{s,t}F}_{C(\MM_1)}= \sup_{\mu \in \MM_1} \norm{\Phi^{s,t}F(\mu)} = \sup_{\mu \in \MM_1} \norm{F(\alpha(s,t,\mu))}\\
& \leq \sup_{\mu \in \MM_1} \norm{F(\mu)}=\Norm{F}_{C(\MM_1)}
\end{split}
\end{equation}
Hence 3) follows.
\end{proof}

\vskip 0.2cm

In order to show that the family of operators $\phi^{t,s}$ in (\ref{Koopman}) is a Koopman-type propagator we will verify that it that it is linear, strongly continuous and that the propagator is a contraction.

\begin{proposition}
\label{bbb}
Under the conditions given in Hypothesis A the family of operators $\phi^{t,s}$ defines a time inhomogeneous propagator in $C^1(\MM_1)$, denoted as Koopman propagator in the sequel, as well as $C^2(\MM_1)$. The following holds:

{\bf\emph{i}}) The Koopman propagator constitutes a family of bounded linear operators
in $C^1(\MM_1)$ and $C^2(\MM_1)$, respectively.

{\bf\emph{ii}}) The generator of the propagator $\phi^{t,s}$ in $C^1(\MM_1)$ is defined by
\begin{equation}
\label{Koopmangenerator}
\mathcal{A}[t,\mu]F(\mu)
  =\int_{\RR^d}{\Ab[t,\mu_t] (\deltab_{[\mu;x]}  F)\mu_t(dx)}
\end{equation}
where $\Ab[t,\mu]$ is given in (\ref{Jumpoperator}).

{\bf\emph{iii}}) The propagator $\phi^{t,s}$ is strongly continuous in $C^1(\MM_1)$ and $C^1(\MM_1)$, respectively.
\end{proposition}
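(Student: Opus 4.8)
The plan is to transport the regularity of the non-linear flow $\alpha(t,s,\cdot)$ furnished by Proposition \ref{nonlin-flow} and Corollary \ref{b} through the composition $(\Phi^{t,s}F)(\mu)=F(\alpha(t,s,\mu))$ in (\ref{Koopman}) by means of the chain rule for variational derivatives. Lemma \ref{phi_C} already delivers linearity, the contraction estimate and strong continuity at the level of $(C(\MM_1),\Norm{\cdot}_{C(\MM_1)})$, so the entire task reduces to controlling the first and second variational derivatives of $\Phi^{t,s}F$ uniformly in $\mu\in\MM_1$ and $0\le t\le s\le T$.

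For part i) I would differentiate the composition. Expanding $\alpha(t,s,\mu+\varepsilon\delta_x)=\alpha(t,s,\mu)+\varepsilon\,\deltab_{[\mu;x]}\alpha(t,s,\mu)+o(\varepsilon)$, where the variational derivative of the flow is a signed measure, the definition of $\deltab_{[\mu;x]}$ yields the chain rule
\[
\deltab_{[\mu;x]}(\Phi^{t,s}F)(\mu)=\int_{\RR^d}\deltab_{[\alpha(t,s,\mu);y]}F\,\bigl(\deltab_{[\mu;x]}\alpha(t,s,\mu)\bigr)(dy).
\]
By Corollary \ref{b} i) the flow is twice continuously differentiable in its initial measure in the variational sense, and a Gronwall estimate on the associated linearized (variational) equation --- in the spirit of (\ref{16-bound}) --- shows that the total variation of $\deltab_{[\mu;x]}\alpha(t,s,\mu)$ is bounded uniformly in $x$, $\mu\in\MM_1$ and $0\le t\le s\le T$. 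Consequently $\Norm{\deltab_{[\mu]}(\Phi^{t,s}F)}_{C_\infty(\RR^d)}\le c\,\Norm{\deltab_{[\cdot]}F}_{C_\infty(\RR^d)}$, and combined with the contraction from Lemma \ref{phi_C} this gives boundedness in $C^1(\MM_1)$. Differentiating once more and pairing the second variational derivative of $F$ with products of first derivatives of the flow and the (uniformly bounded) second variational derivative of $\alpha$ from Corollary \ref{b} i) yields the analogous bound in $C^2(\MM_1)$.

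For part ii) I would compute the generator as the right derivative of $\Phi^{t,s}$ at the diagonal. From the kinetic equation (\ref{16}) the flow satisfies $\prtl_s\alpha(t,s,\mu)\big|_{s=t}=\Ab^*[t,\mu]\mu$, so $\alpha(t,t+h,\mu)=\mu+h\,\Ab^*[t,\mu]\mu+o(h)$; inserting this into $\Phi^{t,t+h}F$ and using the directional variational derivative of $F$ in the direction of the signed measure $\Ab^*[t,\mu]\mu$ gives
\[
\mathcal{A}[t,\mu]F(\mu)=\lim_{h\to0^+}\frac{(\Phi^{t,t+h}F)(\mu)-F(\mu)}{h}=\int_{\RR^d}\deltab_{[\mu;x]}F\,(\Ab^*[t,\mu]\mu)(dx).
\]
Setting $g(x):=\deltab_{[\mu;x]}F\in C_\infty(\RR^d)$ and applying the adjoint relation $(g,\Ab^*[t,\mu]\mu)=(\Ab[t,\mu]g,\mu)$ that defines $\Ab^*$ converts the right-hand side into $\int_{\RR^d}\Ab[t,\mu](\deltab_{[\mu;x]}F)\,\mu(dx)$, which is exactly (\ref{Koopmangenerator}), with $\Ab$ from (\ref{Jumpoperator}) acting on the $x$-variable. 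Part iii) then follows from the joint continuity of $\alpha(t,s,\mu)$ and of its first and second variational derivatives in $(t,s)$ contained in Corollary \ref{b} and Proposition \ref{nonlin-flow}, together with $\alpha(t,t,\mu)=\mu$: feeding these into the chain-rule expressions above lets one estimate $\Norm{\Phi^{t,s}F-\Phi^{t',s'}F}_{C^1(\MM_1)}$ and the corresponding $C^2(\MM_1)$ difference and send $(t',s')\to(t,s)$.

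The step I expect to be the main obstacle is establishing that the first and second variational derivatives of the non-linear flow are not merely existent but bounded and continuous uniformly over $\mu\in\MM_1$ and $0\le t\le s\le T$, so that these bounds survive the composition; this is where the full force of Hypothesis A (notably (A4)) and of the variational-smoothness statement in Corollary \ref{b} i) must be exploited. Once that uniformity is in hand, the generator identification in ii) is a clean adjoint computation and the continuity in iii) is routine.
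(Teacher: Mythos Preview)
Your proposal is correct and follows essentially the same route as the paper's proof: Lemma \ref{phi_C} for the $C(\MM_1)$ level, then the chain rule for variational derivatives applied to $F\circ\alpha(t,s,\cdot)$ to control first and second variational derivatives in part i), and the kinetic equation combined with duality for part ii). The only cosmetic difference is that in part ii) the paper writes the difference quotient in integral form, $\tfrac{1}{s}\int_0^s\int(\deltab_{[\mu_t,x]}F)\,\dot\mu_t(dx)\,dt$, and then invokes duality, whereas you expand $\alpha(t,t+h,\mu)$ to first order and apply the adjoint relation explicitly; and in part iii) the paper simply records that strong continuity follows from the bounded generator established in ii), which is the abstract version of your direct argument via continuity of $\alpha$ and its variational derivatives.
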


\begin{proof}
i) Due to Lemma \ref{phi_C} the family $\{\phi^{t,s}\}$ is a well defined linear operator on $C(\MM_1)$ and hence on $C^1(\MM_1)$ and $C^2(\MM_1)$.
We continue by showing that for arbitrary $0\le t\le s\le T$ the operator $\phi^{s,t}$ is a bounded operator in the space
$(C^1 (\MM_1),\Norm{\cdot}_{C^1 (\MM_1)})$. For $F \in C^1 (\MM_1)$ we have
\[
 \deltab_{[\mu,x]}(\Phi^{t,s} F)
  =  \int_{\RR^d} (\deltab_{[\alpha(t,s,\mu);z]} F) \deltab_{[x]}\alpha(t,s,\mu(dz)).
\]
Due to the regularity guaranteed by Hypothesis A, duality of
the spaces $C_\infty (\RR^d)$ and $\MM_1$ reveals for all $x\in\RR^d$:
\[
 \Norm{\deltab_{[\mu,x]}(\Phi^{t,s} F)}_{C(\MM_1)}
  \le \Norm{\deltab_{[\alpha(t,s,\mu);z]} F}_{C(\MM_1)}
   \Norm{\deltab_{[x]}\alpha(t,s,\mu)}^{*} <\infty.
\]
where $\sup_{x\in\RR^d}\Norm{\deltab_{[x]}\alpha(t,s,\mu)}^{*}$ by Hypothesis A. In fact we exploit the constant given in Corollary \ref{b}.

Much the same argument holds when showing boundedness of the propagator $\phi^{t,s}$ in the space $(C^2(\MM_1),\Norm{\cdot}_{C^2(\MM)})$. For every $x,y\in\RR^d$ and $F\in C^2(\MM_1) $ explicit calculation reveals
\begin{eqnarray*}
 \deltab^2_{[\mu;x,y]}(\Phi^{s,t}F)
   &=&  \int_{\RR^d} (\deltab^2_{[\alpha(t,s,\mu);z]} F)  \deltab^2_{[x]} \alpha(t,s,\mu(dz))\\
 && \quad +\int_{\RR^{2d}} (\deltab^2_{[\alpha(t,s,\mu);z,r]} F) \deltab_{[x]}\alpha(t,s,\mu(dz))
  \deltab_{[y]}\alpha(t,s,\mu(dr)).
\end{eqnarray*}
A uniform bound for the second order variational derivatives of the solution of (\ref{16}) is guaranteed by Proposition (\ref{nonlin-flow}), which implies that the Koopman propagator leaves the space $C^2 ( \MM_1 )$ invariant. Combining this estimate with Lemma \ref{phi_C} finishes part i) of the proof.

ii) According to Proposition \ref{nonlin-flow} the flow $ \alpha(0,s,\mu)$ associated with
the kinetic equation (\ref{16}) exists for all $s\in[0,T]$  and $\mu\in \MM_1$. By inserting the definition of the Koopman propagator (\ref{Koopman}) into the formal definition of a generator, we find for any $\mu\in\MM_1$
\begin{eqnarray*}
\lefteqn{
 \frac{(\phi^{0,s}F)(\mu)-F(\mu)}{s}
   = \frac{F(\alpha(0,s,\mu))-F(\mu_0)}{s} = \frac{F(\mu_s)-F(\mu)}{s}
 }\\
  &=&\frac 1s \int_0^s\int_{\RR^d} (\deltab_{[\mu_t,x]} F) \dot{\mu}_t(dx) \, dt\
   \le \Norm{A^{*}}\Norm{F}_{C^1 (\MM_1)} \Norm{\mu_t}^{*} < \infty
\end{eqnarray*}
with operator norm $\Norm{\cdot}$. Here we used the chain rule for variational derivatives, see Lemma (F.3) in \cite{K3} and (\ref{16-bound}). Exploiting duality
we conclude that the infinitesimal generator ${\bf\mathcal A}[t,\mu]$ is a bounded operator in
$(C^1 (\MM_1),\Norm{\cdot}_{C^1 (\MM_1)})$ of the form (\ref{Koopmangenerator}).\\
iii) Strong continuity of $\phi^{s,t}$ in $(C^1 (\MM_1),\Norm{\cdot}_{C^1 (\MM_1)})$ and $(C^2 (\MM_1),\Norm{\cdot}_{C^2 (\MM_1)})$ is a direct consequence of ii).

\end{proof}

There exists a non-linear Markov process associated with the kinetic equation (\ref{16}), see \cite{K9} where a similar Martingale approach may be used here. For a definition see \cite{K5}. Different constructions also for pure jump cases may be found in \cite{CL,PL}.

In order to prove the mean-field limit, i.e. estimate errors, we need to unify spaces. This is possible since the factor spaces $S\RR^{Nd}$ and the space of $N$-point measures $\Pmass_\delta^N (\RR^{d})$ may be identified. Consequently we introduce the non-linear operator
\begin{equation}
\label{eqn:NMFgenerator}
 \mathfrak{\hat A}^N[t,\mu^N,u] F(\mu^N):=\AF^N[t,\mu^N,u] f(\xb)
\end{equation}
on $C(\Pmass_\delta^N (\RR^d))$ by identification, where as above the empirical measure $\mu^N=\frac 1N \sum_{i=1}^N \delta_{x_i}\in\Pmass_\delta^N (\RR^d)\subset\MM_1(\RR^d) \subset \MM(\RR^d)$
and $\xb=(x_1,\ldots ,x_N)\in \RR^{Nd} $.

\vskip 0.2cm
\begin{proposition}\label{ANexpansion}
$\nu$ satisfies the Hypothesis A and let $F\in C^2_{\infty}(\Pmass_\delta^N(\RR^d))$ then the operator $\mathfrak{\hat A}^N[t,\mu^N,u]$, with $\mu^N = \fracN\delta_{\xb}$, has the representation
\begin{eqnarray*}
\lefteqn{ \mathfrak{\hat A}^N[t, \mu^N,u] F(\mu^N)
 = \int_{\RR^d} \Ab[t, \mu^N, u] \deltab_{[x]} F(\mu^N) \mu^N(dx)
        + \fracN\int_0^1\! (1-s)
 }\\
 &&\!\!\int_{\RR^{2d}}\!\!
  \left(
 \deltab_{[y]}^2 F (\mu^N+ {\tfrac{s}{N}}(\delta_{x+y}-\delta_{x})),
 (\delta_{x+y}-\delta_{x})^{\otimes 2}
   \right)\nu(t,x, \mu^N, u,dy) \mu^N(dx)ds\ .
\end{eqnarray*}
Here $\deltab_{[y]}$ stands for the variational derivative in the direction $\delta_{x+y}-\delta_{x}$.
\end{proposition}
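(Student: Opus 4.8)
The plan is to reduce the statement to a one-particle computation and then apply an exact second-order Taylor formula for the $C^2$-functional $F$ on the measure space. Fix a symmetric $f$ on $\RR^{Nd}$ identified with $F$ on $\Pmass_\delta^N(\RR^d)$ through (\ref{x2Px}), so that $f(\xb)=F(\mu^N)$ with $\mu^N=\fracN\sum_{i=1}^N\delta_{x_i}$. The key observation is that, to match the increment convention used on the right-hand side, letting the $i$-th coordinate jump from $x_i$ to $x_i+y$ amounts to the perturbation $\mu^N\mapsto\mu^N+\eta_i$ of the empirical measure, with $\eta_i:=\fracN(\delta_{x_i+y}-\delta_{x_i})$. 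Hence each summand of $\AF^N$ in (\ref{jumpgenerator}) can be written as
\begin{equation*}
 \Ab^{N,i}f(\xb)=\int_{\RR^d}\bigl(F(\mu^N+\eta_i)-F(\mu^N)\bigr)\,\nu(t,x_i,\mu^N,u,dy).
\end{equation*}
First I would record that $\mu^N+s\eta_i$ is a probability measure for every $s\in[0,1]$: the total mass is preserved, and the mass subtracted at $x_i$, namely $s/N$, never exceeds the mass $\mu^N$ assigns there. Thus the whole segment lies in the domain on which $F$ is twice continuously differentiable, which is exactly what licenses the expansion.

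Along that segment I would then invoke the second-order Taylor formula with integral remainder for functionals on $\MM_1$: setting $g(s):=F(\mu^N+s\eta_i)$ and using the identity $g(1)-g(0)=g'(0)+\int_0^1(1-s)g''(s)\,ds$ together with the chain rule for variational derivatives (Lemma F.3 in \cite{K3}) to compute $g'(s)=(\deltab_{[\mu^N+s\eta_i]}F,\eta_i)$ and $g''(s)=(\deltab^2_{[\mu^N+s\eta_i]}F,\eta_i^{\otimes 2})$, one obtains
\begin{equation*}
 F(\mu^N+\eta_i)-F(\mu^N)=(\deltab_{[\mu^N]}F,\eta_i)+\int_0^1(1-s)\,\bigl(\deltab^2_{[\mu^N+s\eta_i]}F,\eta_i^{\otimes 2}\bigr)\,ds.
\end{equation*}
Substituting $\eta_i=\fracN(\delta_{x_i+y}-\delta_{x_i})$, the first-order term contributes $\fracN\bigl(\deltab_{[\mu^N;x_i+y]}F-\deltab_{[\mu^N;x_i]}F\bigr)$, while the remainder carries the prefactor $\tfrac{1}{N^{2}}$ coming from $\eta_i^{\otimes 2}$ together with $(\delta_{x_i+y}-\delta_{x_i})^{\otimes 2}$.

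The remaining step is resummation. Summing over $i$ and using $\sum_{i=1}^N\fracN\,h(x_i)=\int_{\RR^d}h(x)\,\mu^N(dx)$, I would turn the first-order part into
\begin{equation*}
 \int_{\RR^d}\!\!\int_{\RR^d}\!\bigl(\deltab_{[\mu^N;x+y]}F-\deltab_{[\mu^N;x]}F\bigr)\nu(t,x,\mu^N,u,dy)\,\mu^N(dx)=\int_{\RR^d}\Ab[t,\mu^N,u]\,\deltab_{[x]}F(\mu^N)\,\mu^N(dx),
\end{equation*}
recognizing the inner integral, by the very definition (\ref{Jumpoperator}), as $\Ab$ applied to the function $x\mapsto\deltab_{[\mu^N;x]}F$. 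Each remainder term carries $\tfrac{1}{N^{2}}$, and one factor $\fracN$ is absorbed by the resummation $\sum_i\fracN=\int\mu^N(dx)$; this leaves the global prefactor $\fracN$ and reproduces exactly the stated second term, with $\deltab^2_{[y]}F$ evaluated at $\mu^N+\tfrac{s}{N}(\delta_{x+y}-\delta_{x})$.

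I expect the genuine work to lie in justifying the Taylor formula on the measure space rather than in the bookkeeping of the $\fracN$ factors. One must verify that $s\mapsto F(\mu^N+s\eta_i)$ is indeed $C^2$ with the derivatives displayed above, that the interchange of the $y$-integration, the $s$-integration and the finite sum over $i$ is legitimate, and that every term is finite. All of this follows from $F\in C^2_{\infty}(\Pmass_\delta^N(\RR^d))$ together with the uniform boundedness of $\nu$ in Hypothesis~A (A1) and the existence of the second variational derivative (A4), which bound the integrands uniformly and thereby justify Fubini; the segment-invariance noted in the first step removes the only potential domain issue.
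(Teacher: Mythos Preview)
Your proposal is correct and follows essentially the same route as the paper's proof: apply the second-order Taylor formula with integral remainder (the paper cites \cite{K3} Corollary~F.2 and \cite{DM}) to $F$ along the segment with base point $Y=\mu^N$ and increment $\zeta=\fracN(\delta_{x_i+y}-\delta_{x_i})$, then insert under the $\nu$-integral and sum over $i$. The paper's argument is terser---it states the Taylor identity and then says ``inserting \ldots\ finishes the proof''---whereas you spell out the resummation $\sum_i\fracN=\int\mu^N(dx)$, the identification of the first-order term with $\Ab$ acting on $x\mapsto\deltab_{[\mu^N;x]}F$, and the $\fracN$-bookkeeping for the remainder; but the underlying idea is identical.
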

\begin{proof}
For $Y$ and $Y+\zeta$ such that the whole line
$\{Y+ \theta\zeta\mid 0\le \theta\le 1\}$ is in $\Pmass_\delta^N (\RR^d)$ and $F\in C^2(\Pmass_\delta^N (\RR^d))$
the Taylor theorem in $\MM$ gives the following representation, see \cite{DM} and \cite{K3} Corollary F.2:
\begin{displaymath}
 F(Y+\zeta) - F(Y) = \left(\deltab_{[\zeta]} F(Y),\zeta\right)
+\int_0^1 (1-s) \left(\deltab_{[\zeta]}^2 F(Y+s\zeta),\zeta\otimes\zeta\right)ds\ .
\end{displaymath}

Inserting the Taylor expansion of order 2 under the integral for the choice $Y=\delta_{\xb}/N$ and $\zeta= (\delta_{x_i+y} -\delta_{x_i})/N$ finishes the proof.

We gladly conclude that the first term coincides with the Koopman propagator (\ref{Koopman}) as $N$ converges to $\infty$.
\end{proof}
\begin{remark}\label{psi_N} When restricting the operator $\mathfrak{\hat A}^N$ to $C^2(\Pmass_\delta^N (\RR^d))$ in variational sense the representation in the Proposition \ref{ANexpansion} can be exploited to extend $\mathfrak{\hat A}^N$ to $C^2(\MM)$ in variational sense. This is essential for the proof of the approximate Nash equilibrium since the flow associated with the kinetic equation (\ref{16}) will appear in the argument which is not confined to $C^2(\Pmass^N_\delta(\RR^d))$.

Let $\psi^{t,s}_N$, $0\le t\le s\le T$, denote the corresponding propagator on $C^2(\MM)$.
\end{remark}

\section{Optimal Control for a Representative Player}

In this subsection we shall specify how preferences are optimized in the mean-field game. The mean-field game approach proposes that all players are independent and identically distributed (i.i.d.) and the asymptotic optimal control problem, is of standard type for a single representative agent playing against the overall mass i.e the mean-field. Optimization is based on the principle of dynamic programming and the corresponding HJB equation for the Markov jump processes associated with the generators $\Ab$ and $\AF^N$.

Two types of control problems will be covered: One simplified auxiliary cost function $J$ which does not depend on the state but on the dynamics associated with the mean-field only. The other type where the state of an individual player is introduced who is either one of $N$ indistinguishable players, playing against an $N$-mean-field of all players, or one player, playing against the mean field. This player is subject to the limiting dynamics which is assumed to be given by the operator $\Ab$ in \ref{jumpgenerator}.
\vskip 0.2cm
Before reaching the mean-field limit the measure parameter in the dynamics and the value function is an external parameter even when anticipating the mean-field $\mu_t,\ t\in[0,T]$. Therefore optimization of preferences is pursued by traditional control problems of jump-type with identical cost function though the processes $X^N$ and $X$ differ. Roughly speaking we may control the dynamics of the process by changing its jump intensity dynamically. We introduce the class of feedback control laws $\mathscr{U}$ as the set of all Borel measurable maps $\gamma: [0,T]\times\RR^d \rightarrow U$. To any such $\gamma$ we associate the jump rate
$\nu(t,x,\rho,\gamma(t,x),dy)$. Following Bandini and Fuhrman \cite{BA} and Pliska \cite{PL,PL2} Theorems 3,\, 6 probability spaces and filtrations can be constructed such that $X^N$ and $X$ are Markov processes while including the feedback control. In this section we replace the measure valued parameter $\rho\in\MM_1$ by a family of measure valued curves
$\{\mu_s : s\in [t,T]\}$,
indexed by time $0\le t\le T$.
We emphasize that it is treated as an external parameter depending continuously on time $t$. Hence the assumptions in \cite{BA} and \cite{PL,PL2} hold true. We skip the details of the construction in those papers.

\vskip 0.2cm
Let us introduce the function
\begin{displaymath}
 \Theta(t,x,\rho, u)= J(t,x,\rho,u)+\Ab[t,x,\rho,u]V(t,x,\rho),
\end{displaymath}
on $[0,T] \times \RR^d \times \MM_1 \times U$. Adopting the conditions in \cite{BA,FR} we introduce the following collection of Hypotheses:
\vskip 0.2cm
{\bf Hypothesis B}
\begin{itemize}
\item (B1) Assume that $U \subset \RR^m$ is a compact, convex set with  $C^2$ boundary.
 \item (B2) The cost function $J(t,x,\rho,u)$ is a bounded real valued function, concave in $u$, continuous in $t$, Lipschitz continuous in $x$, and its G\^ateaux derivative in the measure parameter $\rho$ in any direction $y \in \MM_1$ exists for all $x \in \RR^d$ and $u\in U$ and satisfies
\[
  \sup_{\rho\in \MM_1}(\norm{D_{[\rho;y]}J(t,x,\cdot, u)}
  +\norm{V^T(x,\cdot)})
  \leq c_1 \Norm{y}^{*}.
\]
 \item (B3) In addition to the variational differentiability of the bounded operator $\Ab$ we assume further that the operator norm satisfies;
\[
  \sup_{\rho\in \MM_1} \norm{D_{[\rho;y]} \Ab [t,x,\cdot,u]}_{C_{\infty}(\RR^d) \rightarrow C_{\infty}(\RR^d)} \leq c_2 \Norm{y}^{*}.
\]
\item (B4) $J(s,x,\rho,u)$ and $\Ab(s,x,\rho,u)$ are $C^2$ differentiable with respect to the control parameter $u \in U \subset \RR^k$.
\item (B5) $\Theta_{u}(t,x,\rho,u)$ satisfies a Lipschitz condition in $x\in\RR^d$, uniformly in $t$, $\rho$ and $u$.
\item (B6) The absolute value of the eigenvalues of the matrices $\Theta_{uu}$ are bounded above by $\gamma > 0$ and, in particular for all $y \in \MM_1$ and feedback control laws $u:= \gamma(t,x;\rho)$:
\begin{displaymath}
  \sup_{\rho\in \MM_1} \sup_{u\in U}\norm{D_{[\rho;y,y]}^2 J(t,x,\cdot, u)}< c ,
\end{displaymath}
\end{itemize}

The conditions in Hypothesis B are meant to guarantee not only an optimal payoff but also a Lipschitz continuous optimal feedback control. Let us fix $\mub\in C([0,T],\MM_1)$.  We proceed by showing existence of a unique solution to the HJB equation

\begin{eqnarray}
\label{12}
  \frac{\prtl W}{\prtl t}
   +\max_{\gamma}[J(t,x,\mu_{t},\gamma(t,x)) \!\!\!&+&\!\!\! \Ab[t,x,\mu_{t},\gamma(t,x)]W]=0 \\
  W(T,x;\mu_T) &=& V^T(x,\mu_T), \nonumber
\end{eqnarray}
on $[0,T]\times\RR^d\times C([0,T],\MM_1)$ which in a subsequent step will be shown to coincide with the value function of the type given in (\ref{V-MF-limit}).
We proceed by recalling previous results which we adopt to our setting, see \cite{BA,PL}. It is worth mentioning that Pliska \cite{PL} makes use of the theory of non-linear semigroups, developed by Crandell and Liggett \cite{CL}, that directly generalizes to propagators.
\begin{proposition}
\label{existenceW} Let $\mub\in C([0,T],\MM_1)$.\\
i) Under the Hypotheses A and B on the bounded linear operator $\Ab[t,x,\mu_{t},\gamma(t,x)]$ and the cost function $J(t,x,\mu_{ t},\gamma(t,x))$ and the terminal data $V^T$ there exists a unique mild solution $W\in C_b([0,T]\times\RR^d\times C([0,T],\MM_1)$ to the HJB equation (\ref{12}) which is almost surely differentiable with respect to $t$.
\vskip 0.2cm
ii) Under the Hypotheses A and B and positive jump rate $\nu$ there exists a unique bounded Borel measurable function $W$ on $[0,T]\times\RR^d\times C([0,T],\MM_1))$ which solves the HJB equation (\ref{12}), i.e. for all $t\in [0,T]$.
\end{proposition}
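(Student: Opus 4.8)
The plan is to recast the terminal-value problem (\ref{12}) as a fixed-point equation for a mild solution and to use the boundedness of the jump operator $\Ab$ to reduce everything to a contraction argument. First I would absorb the nonlinearity into the Hamiltonian
\[
 H[t,x,\mu_t]W := \max_{u\in U}\bigl( J(t,x,\mu_t,u)+\Ab[t,x,\mu_t,u]W \bigr),
\]
so that (\ref{12}) reads $\prtl_t W + H[t,x,\mu_t]W = 0$ with terminal datum $W(T,x)=V^T(x,\mu_T)$. Since $U$ is compact (B1), $J$ is bounded and continuous in $t$ and $u$ (B2, B4), and $\Ab[t,x,\mu_t,u]W$ is bounded uniformly in $u$ because $\nu$ is uniformly bounded (A1) and $\Ab$ is a bounded operator on $C_\infty(\RR^d)$ (Lemma \ref{aab}), the maximum is attained. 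Concavity of $J$ in $u$ (B2) together with $\Theta_u$ Lipschitz (B5) and the control on the eigenvalues of $\Theta_{uu}$ (B6) singles out a unique maximiser, and the implicit function theorem applied to the first-order condition $\Theta_u=0$ produces an optimal feedback $\hat\gamma(t,x;\mu)$ that is Lipschitz in $x$ and measurable in $t$; this also furnishes the admissibility $\hat\gamma\in\mathscr{U}$ needed later. The dependence on the curve $\mub\in C([0,T],\MM_1)$ enters as an external parameter and is handled by the Lipschitz estimates in $\rho$ of (B2), (B3) and Corollary \ref{b}.

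Next I would pass to the equivalent integral form
\[
 W(t,x)=V^T(x,\mu_T)+\int_t^T H[s,x,\mu_s]W(s,\cdot)\,ds,
\]
and define the map $\mathcal{G}$ sending $W$ to the right-hand side on the Banach space $C_b([0,T]\times\RR^d)$, with $\mub$ held fixed. Note $H[t]W\in C_b(\RR^d)$ because $J$ is Lipschitz in $x$ and $\nu$ is Lipschitz in $x$ (A3) with $W$ bounded, so $\Ab W$ is Lipschitz in $x$ uniformly in $u$, and the uniform maximum of equi-Lipschitz functions is again Lipschitz. The decisive estimate is that $H$ is globally Lipschitz in its function argument: using $\norm{\max_u a(u)-\max_u b(u)}\le \sup_u\norm{a(u)-b(u)}$ and Lemma \ref{aab},
\[
 \norm{H[t,x,\mu_t]W_1-H[t,x,\mu_t]W_2}\le \sup_{u\in U}\norm{\Ab[t,x,\mu_t,u](W_1-W_2)}\le c\,\Norm{W_1-W_2}_\infty,
\]
where $c$ is the uniform bound on $\Norm{\Ab}$ granted by (A1). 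Hence $\Norm{\mathcal{G}W_1(t,\cdot)-\mathcal{G}W_2(t,\cdot)}_\infty\le c\int_t^T\Norm{W_1(s,\cdot)-W_2(s,\cdot)}_\infty\,ds$, and on $C_b([0,T]\times\RR^d)$ with the weighted norm $\sup_t e^{-\beta(T-t)}\Norm{W(t,\cdot)}_\infty$, $\beta>c$, the map $\mathcal{G}$ is a contraction. Banach's fixed-point theorem then yields a unique $W\in C_b$, global on all of $[0,T]$ since $c$ is uniform in time; this proves part i). The integrand $s\mapsto H[s,\cdot,\mu_s]W(s,\cdot)$ is bounded and, by continuity of $\nu$ in $t$ (A2) and of $\mub$, continuous for a.e. $s$, so $W$ is Lipschitz in $t$, differentiable for almost every $t$, and differentiating the integral equation recovers (\ref{12}).

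For part ii) I would add the probabilistic layer. With $\nu$ positive, Lemma \ref{aab} and Proposition \ref{process-N} guarantee that $\Ab[t,x,\mu_t,\hat\gamma]$ generates a genuine jump Markov process with c\`adl\`ag paths, so the fixed point $W$ admits the stochastic representation along the controlled process and coincides with the value function of the single-player problem; the verification argument of Bandini--Fuhrman \cite{BA} and Pliska \cite{PL}, which rests on the nonlinear-semigroup construction of Crandall--Liggett \cite{CL} and carries over verbatim to propagators, then upgrades the mild solution to a bounded Borel measurable function solving (\ref{12}) for every $t\in[0,T]$, uniqueness again following from the contraction of part i).

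I expect the main obstacle to be the regularity of the argmax: showing that the pointwise maximiser in $H$ can be chosen as a Lipschitz-in-$x$, measurable-in-$t$ feedback law, and that the maximum operation preserves both the continuity in $(t,x)$ and the Lipschitz dependence on $W$ on which the contraction rests. The contraction itself is routine once the Lipschitz property of $H$ is secured; it is the interplay of the concavity hypothesis (B2) with the second-order conditions (B5), (B6) and the implicit function theorem that carries the real weight.
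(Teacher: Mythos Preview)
Your contraction argument is correct and is precisely what the paper has in mind: the paper's own proof consists of citing \cite{BA} Corollary~2.8 for part~i) and \cite{PL} for part~ii), with the single remark that ``the second term of (\ref{12}) is Lipschitz continuous in $W$'' --- exactly the estimate $\norm{H[t,x,\mu_t]W_1-H[t,x,\mu_t]W_2}\le c\Norm{W_1-W_2}_\infty$ that you isolate and on which your fixed-point argument turns. So you have supplied the details the paper delegates to the literature, via the same mechanism.

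One correction of emphasis: the regularity of the argmax is \emph{not} needed for this proposition and is not the main obstacle here. The Lipschitz dependence of $H$ on $W$ follows from the elementary inequality $\norm{\max_u a(u)-\max_u b(u)}\le\sup_u\norm{a(u)-b(u)}$ together with the uniform bound on $\Norm{\Ab}$, and requires nothing about the maximiser itself; existence and uniqueness of $W$ therefore hold without ever selecting a feedback. The Lipschitz-in-$x$ selection $\hat\gamma$ and the verification that $W$ equals the value function are the content of the \emph{subsequent} results (Propositions~\ref{existenceV} and~\ref{existenceV1}), where the concavity and second-order hypotheses (B5), (B6) do the work you describe. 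For the present proposition the argument is genuinely routine once the Lipschitz estimate on $H$ is in hand.
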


Since $\mub\in C([0,T],\MM_1)$ the proof of i) is a direct consequence of Corollary 2.8 \cite{BA} and ii) holds since the assumptions of \cite{PL} are satisfied. For the proof it is actually shown that the second term of (\ref{12}) is Lipschitz continuous in $W$. We comment on relaxing the extra conditions below.

\begin{remark}
The theory of time dependent Markovian semi-groups has recovered the results needed for our work. Moreover, shifting the bounded generator $\Ab$ by a constant in order to make it positive does not change the existence and uniqueness results but only changes the optimal reward by a discounting. We therefore conclude that Proposition \ref{existenceW} also holds when relaxing the condition that $\nu$ is positive and autonomous.
\end{remark}

In view of the previous remarks the assumptions we collected in Hypotheses A and B match those in both works, moreover, combining the results of Proposition \ref{existenceW}, uniqueness reveals:

\begin{corollary} Under the Hypotheses A and B on the bounded linear operator $\Ab[t,x,\mu_{t},\gamma(t,x)]$ and the cost function $J(t,x,\mu_{t},\gamma(t,x))$ and the terminal data $V^T$ there exists a unique solution $W\in C_b([0,T]\times\RR^d\times C([0,T],\MM_1))$ to the HJB equation (\ref{12}).
\end{corollary}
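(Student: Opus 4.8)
The plan is to read the Corollary off the two parts of Proposition~\ref{existenceW}, the substantive point being to match up the two notions of solution carried by that proposition so that ``the'' solution is unambiguous. Existence and the $C_b$-regularity will come from part~i), while uniqueness will be imported from part~ii) by enlarging the admissible class.

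For existence I would simply invoke part~i) of Proposition~\ref{existenceW}, which already delivers a mild solution $W\in C_b([0,T]\times\RR^d\times C([0,T],\MM_1))$ that is almost surely differentiable in $t$. Because this $t$-differentiability lets one differentiate the underlying integral (mild) formulation, $W$ satisfies the differential HJB equation (\ref{12}) in the almost-everywhere sense while remaining in $C_b$; this is the candidate solution. For uniqueness I would pass to the larger class: every $C_b$ solution of (\ref{12}) is in particular bounded and Borel measurable, hence lies in the class treated by part~ii), which asserts uniqueness there. Thus any two $C_b$ solutions coincide, and in addition the mild $C_b$ solution of i) must equal the bounded Borel measurable solution of ii). A single object is thereby pinned down as the unique solution in both senses, which is exactly the assertion of the Corollary.

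The engine behind both halves of Proposition~\ref{existenceW} --- and the place where the argument is delicate --- is the Lipschitz continuity in the unknown $W$ of the Hamiltonian term $\max_{\gamma}[\,J(t,x,\mu_t,\gamma(t,x))+\Ab[t,x,\mu_t,\gamma(t,x)]W\,]$. This follows from the boundedness of $\Ab$ (Lemma~\ref{aab}) together with Hypotheses A and B and the continuity of the prescribed curve $\mub\in C([0,T],\MM_1)$; it is precisely this Lipschitz dependence that powers the contraction/Gronwall estimate granting uniqueness in the Borel class and that transfers the continuity of the data $J$, $\Ab$ and $V^T$ to the solution. The main obstacle I anticipate is therefore not an estimate but a matching issue: one must verify that the mild solution of i) and the Borel measurable solution of ii) are solutions in mutually compatible senses and coincide on their overlap, so that the unique $C_b$ object asserted by the Corollary is genuinely well defined.
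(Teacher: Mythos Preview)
Your proposal is correct and follows essentially the same route as the paper: the Corollary is obtained by combining parts i) and ii) of Proposition~\ref{existenceW}, with the $C_b$ mild solution of i) identified with the bounded Borel measurable solution of ii) via the uniqueness in the larger class. The paper states this in one line (``combining the results of Proposition~\ref{existenceW}, uniqueness reveals''), while you have usefully spelled out the matching argument and the role of the Lipschitz dependence of the Hamiltonian on $W$.
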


We are now ready to address existence and in a further step uniqueness of an optimal feedback control.
\begin{proposition}
\label{existenceV}
Let the Hypotheses A and B on the bounded linear operator $\Ab[t,x,\mu_t,\gamma(t,x)]$ and the cost function $J(t,x,\mu_{ t},\gamma(t,x))$ and the terminal data $V^T$ be valid.

i) Then the unique solution of the HJB equation (\ref{12}) is equal to the value function V in (\ref{V-MF-limit}), i.e. $W=V$.
\vskip 0.2cm
ii) There exists an optimal feedback control $\hat\gamma= \hat\gamma(t,x;\mub)$ on $[0,T]\times\RR^d\times C([0,T],\MM_1)$, which is given by any function satisfying
\begin{displaymath}
  J(t,x,\mu_{t},\hat\gamma) + \Ab[t,x,\mu_{t},\hat\gamma]V=
 \max_{\gamma=\gamma(t,x)}(J(t,x,\mu_{t},\gamma)+ \Ab[t,x,\mu_{t},\gamma]V) .
\end{displaymath}
\end{proposition}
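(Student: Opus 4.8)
The plan is to establish Proposition~\ref{existenceV} as a verification theorem for the jump-type HJB equation~(\ref{12}), combining a measurable selection argument with two applications of the Dynkin formula for the controlled propagator. Throughout, the curve $\mub\in C([0,T],\MM_1)$ is held fixed as an external parameter, so that $\Ab[t,x,\mu_t,\gamma]$ generates a genuine time-inhomogeneous jump Markov process by Lemma~\ref{aab} and Proposition~\ref{process-N}, and $W$ denotes the unique solution of~(\ref{12}) furnished by Proposition~\ref{existenceW}, which is bounded, satisfies $W(T,\cdot)=V^T$, and is differentiable in $t$.

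First I would dispose of the pointwise maximisation underlying part~ii). For fixed $(t,x)$ the map $u\mapsto \Theta(t,x,\mu_t,u)=J(t,x,\mu_t,u)+\Ab[t,x,\mu_t,u]V(t,x,\mu_t)$ is continuous on the compact set $U$ by (B1) and (B4), and hence attains its maximum; the concavity in (B2) together with the Hessian control on $\Theta_{uu}$ in (B6) make this maximiser unique. Joint measurability of $\Theta$ in $(t,x)$ then yields, by the Kuratowski--Ryll-Nardzewski / Filippov measurable selection theorem, a Borel measurable feedback law $\hat\gamma(t,x;\mub)\in U$ realising the argmax. By (B5) the associated kernel $\nu(t,x,\mu_t,\hat\gamma(t,x),dy)$ still obeys Hypothesis A, so $\hat\gamma\in\mathscr{U}$ is admissible and the controlled process exists.

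For part~i) I would prove the two inequalities separately. Given any admissible $\gamma$, let $X^\gamma$ be the process generated by $\Ab[s,\cdot,\mu_s,\gamma]$ and apply the non-autonomous Dynkin formula to $s\mapsto W(s,X^\gamma_s)$ on $[t,T]$:
\begin{equation*}
\EXP_x\!\left[W(T,X^\gamma_T)\right]-W(t,x)
 =\EXP_x\!\left[\int_t^T\Bigl(\tfrac{\prtl W}{\prtl s}+\Ab[s,X^\gamma_s,\mu_s,\gamma]W\Bigr)(s,X^\gamma_s)\,ds\right].
\end{equation*}
Since the bracket in~(\ref{12}) is a maximum over $\gamma$, for every admissible control $\tfrac{\prtl W}{\prtl s}+\Ab[s,\cdot,\mu_s,\gamma]W\le -J(s,\cdot,\mu_s,\gamma)$ pointwise, whence, using $W(T,\cdot)=V^T$,
\begin{equation*}
 W(t,x)\ge \EXP_x\!\left[\int_t^T J(s,X^\gamma_s,\mu_s,\gamma)\,ds+V^T(X^\gamma_T,\mu_T)\right].
\end{equation*}
Taking the supremum over $\gamma$ gives $W\ge V$. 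Replacing $\gamma$ by the selector $\hat\gamma$ from part~ii) turns the pointwise inequality into an equality in the HJB bracket, so the Dynkin identity becomes exact and yields $W(t,x)=\EXP_x\!\left[\int_t^T J(s,X^{\hat\gamma}_s,\mu_s,\hat\gamma)\,ds+V^T(X^{\hat\gamma}_T,\mu_T)\right]\le V(t,x)$. Combining the two bounds gives $W=V$, and along the way $\hat\gamma$ is exhibited as optimal, which is precisely part~ii).

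The main obstacle will be the rigorous justification of the Dynkin formula in this pure-jump, time-inhomogeneous, measure-driven setting rather than the algebra of the two inequalities. Because $\Ab$ is a bounded generator on all of $C_\infty(\RR^d)$ by Lemma~\ref{aab} and $W(s,\cdot)$ lies in its domain with $W$ differentiable in $s$, the identity should follow from the fundamental theorem of calculus applied to the backward propagator $\Lambda^{t,s}$ together with the representation $\Lambda^{t,s}f(x)=\EXP_x[f(X_s)\mid X_t=x]$; the care needed is in handling the explicit $s$-dependence of $W$ and of the external curve $\mu_s$ simultaneously. A secondary point requiring attention is confirming that the measurable selector $\hat\gamma$ indeed lands in the admissible class $\mathscr{U}$ for which the constructions of Bandini--Fuhrman \cite{BA} and Pliska \cite{PL} guarantee a well-defined controlled Markov process, so that all the expectations above are meaningful.
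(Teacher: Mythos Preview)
Your argument is the standard verification-theorem route and is correct in outline; it is also essentially what the paper invokes, since the paper's own ``proof'' consists of a single sentence citing \cite{BA} Theorem~2.10 and \cite{PL}, which establish exactly the Dynkin/comparison argument you spell out. In that sense you have written out what the paper merely imports by reference.

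One small regularity point deserves tightening: Proposition~\ref{existenceW} delivers $W\in C_b([0,T]\times\RR^d\times C([0,T],\MM_1))$, not $W(s,\cdot)\in C_\infty(\RR^d)$, so when you invoke that $W(s,\cdot)$ lies in the domain of the bounded generator $\Ab$ you should note that the jump operator $\Ab[s,\cdot,\mu_s,\gamma]$ with bounded kernel acts on all of $C_b(\RR^d)$, not only on $C_\infty$, and that the Dynkin identity for bounded-rate jump processes holds for bounded measurable test functions. Both \cite{BA} and \cite{PL} work in exactly this bounded-measurable setting, which is why the citation suffices; your write-up should make the same observation rather than relying on $C_\infty$-domain considerations.
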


The result is a direct consequence of \cite{BA} Theorem 2.10, see also \cite{PL}.

\begin{remark}
\label{uniquecontrol}
A sufficient condition for an optimal control law to be unique in case of a maximum payoff problem are convexity of the closed control set $U$ and strict concavity of $\Theta(t,x,\cdot)$, see Theorem 2.4 in \cite{FR}.
\end{remark}

Under even stronger assumptions we retrieve uniform Lipschitz continuity in the starting point as will be shown subsequently.

\begin{proposition} \label{existenceV1} Let $\hat\gamma$ be the unique feedback control on $[0,T]\times\RR^d$ guaranteed by Proposition \ref{existenceV}ii and Remark \ref{uniquecontrol} and suppose that Hypotheses A and B hold. Then $\hat\gamma(t,\cdot)$ satisfies a local Lipschitz condition, uniformly in $t\in[0,T]$.
\end{proposition}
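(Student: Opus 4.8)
The plan is to characterize the optimal control pointwise as an argmax and then transfer the Lipschitz regularity of $\Theta_u$ in $x$ to $\hat\gamma$ through uniform strong concavity in $u$. By Proposition \ref{existenceV} the feedback law is, for each fixed $(t,x)$,
\[
  \hat\gamma(t,x)=\argmax_{u\in U}\Theta(t,x,\mu_t,u),
  \qquad \Theta(t,x,\rho,u)=J(t,x,\rho,u)+\Ab[t,x,\rho,u]V(t,x,\rho),
\]
and by Remark \ref{uniquecontrol} together with (B1) and strict concavity of $\Theta(t,x,\cdot)$ this maximizer is unique. First I would fix a bounded set $K\subset\RR^d$ and establish that on $K$ the map $u\mapsto\Theta(t,x,\mu_t,u)$ is uniformly strongly concave, i.e. there is $c=c(K)>0$ with $\Theta_{uu}(t,x,\mu_t,u)\le -c\,\mathrm{Id}$ for all $t\in[0,T]$, $x\in K$, $u\in U$. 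This is the constant that will drive the Lipschitz bound, and producing it is the crux of the argument.

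Second, I would use the first-order variational inequality for concave maximization over the convex set $U$. For $x_1,x_2\in K$ set $u_i=\hat\gamma(t,x_i)$; optimality gives $\sprod{\Theta_u(t,x_i,\mu_t,u_i),\,v-u_i}\le 0$ for all $v\in U$, $i=1,2$. Choosing $v=u_2$ in the inequality for $i=1$ and $v=u_1$ in the inequality for $i=2$ and adding yields
\[
  \sprod{\Theta_u(t,x_1,\mu_t,u_1)-\Theta_u(t,x_2,\mu_t,u_2),\,u_2-u_1}\le 0.
\]
Inserting $\pm\,\Theta_u(t,x_1,\mu_t,u_2)$ and invoking strong concavity in the form
\[
  \sprod{\Theta_u(t,x_1,\mu_t,u_1)-\Theta_u(t,x_1,\mu_t,u_2),\,u_2-u_1}\ge c\,\norm{u_1-u_2}^2,
\]
which follows from $\Theta_{uu}\le -c\,\mathrm{Id}$, I would obtain
\[
  c\,\norm{u_1-u_2}^2\le
  \sprod{\Theta_u(t,x_1,\mu_t,u_2)-\Theta_u(t,x_2,\mu_t,u_2),\,u_1-u_2}.
\]

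Third, by (B5) the map $\Theta_u$ is Lipschitz in $x$ uniformly in $t,\rho,u$, say with constant $L$, so Cauchy--Schwarz bounds the right-hand side by $L\,\norm{x_1-x_2}\,\norm{u_1-u_2}$. Cancelling one factor of $\norm{u_1-u_2}$ gives
\[
  \norm{\hat\gamma(t,x_1)-\hat\gamma(t,x_2)}\le\frac{L}{c}\,\norm{x_1-x_2},
  \qquad x_1,x_2\in K,
\]
uniformly in $t\in[0,T]$, which is the asserted local Lipschitz property.

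The main obstacle is the first step: securing a uniform strong-concavity constant $c>0$. Strict concavity (Remark \ref{uniquecontrol}) together with (B6) guarantees that $\Theta_{uu}$ is negative definite with $\norm{\Theta_{uu}}\le\gamma$, but only a uniform \emph{lower} bound on $-\Theta_{uu}$ suffices for the monotonicity estimate. I expect to derive it from continuity of $\Theta_{uu}$ in its arguments (via (B4) and the regularity of $J$, $\Ab$ and of $V=W$ furnished by Proposition \ref{existenceV}) together with compactness of $U$ and of $K$: the largest eigenvalue of $\Theta_{uu}$ is then continuous and attains a strictly negative maximum $-c$ on $K\times U$, uniformly in $t\in[0,T]$. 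Since $x$ ranges over all of $\RR^d$ while this compactness argument applies only on $K$, the conclusion is necessarily local rather than global. A secondary point is that the variational inequality is indeed the optimality condition even at boundary maximizers, which is guaranteed by the convexity and $C^2$ regularity of $U$ in (B1); for interior maximizers the same estimate follows alternatively by differentiating $\Theta_u(t,x,\mu_t,\hat\gamma(t,x))=0$ and applying the implicit function theorem with the bound $\norm{\Theta_{uu}^{-1}}\le 1/c$.
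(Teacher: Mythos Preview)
Your argument is correct and coincides with the paper's approach: the paper's own proof consists solely of a reference to Lemma~6.3 in Fleming--Rishel \cite{FR}, adapted from a minimization to a maximization problem, and the variational-inequality plus strong-concavity estimate you spell out is exactly that lemma. Your observation that (B6) as stated only bounds $\norm{\Theta_{uu}}$ above and that the needed uniform lower bound on $-\Theta_{uu}$ must be extracted by compactness on $K\times U$ is precisely the detail the paper leaves implicit, and it is also why the conclusion is \emph{local} in $x$.
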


\begin{proof} The proof follows the general line of arguments given in \cite{FR} Lemma 6.3. However, in \cite{FR} the problem was to find a unique minimum, while in our setting we are looking for a unique maximum. Hence the assumption (B6) is slightly different.
\end{proof}

In the following we want to study regularity in the functional parameter $\mub \in C([0,T], \MM_1)$ for the solution of the HJB equation \ref{12} and the unique optimal feedback control $\hat\gamma = \hat\gamma(t,x;\mub)$. To this end we introduce $\mub^1,\mub^2 \in C([0,T], \MM_1)$
and vary between the two measures along $\mub^\alpha:=\mub^1+ \alpha (\mub^2-\mub^1) \in C([0,T], \MM_1)$, $\alpha\in[0,1]$. Due to convex optimization the previous existence, uniqueness and regularity results hold true when replacing $\mub$ by $\mub^\alpha$. For each $\alpha\in[0,1]$ this involves the notations $\Ab_\alpha[t]:=\Ab[t,x,\mu^\alpha,\gamma(t,x)]$, $\Lambda_\alpha^{t,s}:=\Lambda^{t,s}[t,x,\mu^\alpha,\gamma(t,x)]$, all parts of
$\Theta_\alpha:=\Theta[t,x,\mub^\alpha,\gamma(t,x)]$,
\begin{equation}
\label{Functionalparameter}
V_\alpha:=V(t,x;\mub^\alpha) \qquad\mbox{ and }\qquad
 \hat\gamma_\alpha:=\hat\gamma(t,x;\mub^\alpha), \quad \alpha \in [0,1].
\end{equation}
Inserting the functional flow $\mub^\alpha$ into equation (\ref{12}) together with Proposition \ref{existenceV}ii will lead to the following HJB equation:
\begin{equation}
\label{14}
 \frac{\prtl V_\alpha(t,x;\mub^\alpha_t)}{\prtl t}
 +  \Theta(t,x,\mu^\alpha_t,\hat\gamma(t,x))=0 \ .
\end{equation}

Smooth dependence of $V^T(x,\mu_T)$ on the measure valued parameter $\mu_T\in\MM_1$ amounts to the existence of a G\^ateaux derivative with respect to this parameter, hence amounts by definition to dependence on a real parameter $\alpha \in [0,1]$. By assumption $(B2)$, the G\^ateaux derivatives $D_{[\mu_T;\mu^2_T-\mu^1_T]} V^T(x,\cdot)$ exist in $C_{\infty}(\RR^d)$ for each $x \in \RR^d$. Hence
the derivative $\frac{\prtl V_\alpha^T}{\prtl \alpha}(.)$ exist and belong to $C_{\infty}(\RR^d)$. We have:
\begin{equation}
\label{15**}
V^T(x;\mub^2)-V^T(x;\mub^1)
=\int_0^1{\frac{\prtl V^T_\alpha}{\prtl \alpha}(x)d\alpha}
\end{equation}
with $V_\alpha$ as in (\ref{Functionalparameter}).

\vskip 0.2cm

We proceed by showing that the value function $V(t,x;\mub)$ given by Propositions \ref{existenceW} and \ref{existenceV}i) is differentiable and uniformly Lipschitz continuous in $\mub$. In a first step we study smooth dependence of the solution of the HJB equation (\ref{14}) on a real parameter $\alpha.$ 
\begin{lemma}
\label{AlphaDifferentiable}
Suppose the Hypotheses A and B are satisfied, then the solution $V_\alpha$ of the HJB equation (\ref{14}) is differentiable with respect to $\alpha \in [0,1].$
\end{lemma}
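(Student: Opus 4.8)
The plan is to recast the HJB equation (\ref{14}) in mild form, differentiate the resulting fixed-point relation in $\alpha$, and exploit the fact that the first-order dependence on the optimal control drops out by an envelope argument.

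First I would freeze the optimal feedback $\hat\gamma_\alpha$ guaranteed by Proposition \ref{existenceV}ii and read (\ref{14}) as the \emph{linear} backward equation
\[
 \frac{\prtl V_\alpha}{\prtl t} + \Ab_\alpha[t]\,V_\alpha + J(t,x,\mu_t^\alpha,\hat\gamma_\alpha) = 0, \qquad V_\alpha(T,\cdot) = V^T(\cdot,\mu_T^\alpha),
\]
whose unique bounded mild solution, via the backward propagator $\Lambda_\alpha^{t,s}$ supplied by Lemma \ref{aab}, is
\[
 V_\alpha(t,\cdot) = \Lambda_\alpha^{t,T}\,V^T(\cdot,\mu_T^\alpha) + \int_t^T \Lambda_\alpha^{t,s}\,J(s,\cdot,\mu_s^\alpha,\hat\gamma_\alpha(s,\cdot))\,ds .
\]
I would then form the difference quotient $W_\alpha^h := (V_{\alpha+h}-V_\alpha)/h$ and split the increment of the Hamiltonian $\max_\gamma\Theta$ into a part coming from the change of $V_\alpha$ and a part coming from the explicit change of $\alpha$ through $\mu_t^\alpha$.

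The first part, since $\Ab$ enters $\Theta$ linearly in its $V$-slot, contributes the generator $\Ab_{\alpha+h}[t]$ acting on $W_\alpha^h$. The second part is treated by the envelope (Danskin) theorem: because $\hat\gamma_\alpha$ maximises $\Theta(t,x,\mu_t^\alpha,\cdot)$, the derivative of $\max_\gamma\Theta$ in $\alpha$ equals the explicit partial derivative evaluated at $\hat\gamma_\alpha$, so the contribution of $\prtl_\alpha\hat\gamma_\alpha$ is second order and does not survive the limit. Using (B2) and (B3) the surviving term is
\[
 g_\alpha(t,x) := D_{[\mu_t^\alpha;\,\mu^2_t-\mu^1_t]} J(t,x,\cdot,\hat\gamma_\alpha) + \bigl(D_{[\mu_t^\alpha;\,\mu^2_t-\mu^1_t]}\Ab[t,x,\cdot,\hat\gamma_\alpha]\bigr) V_\alpha ,
\]
which lies in $C_\infty(\RR^d)$ and is bounded uniformly in $\alpha$. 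The formal limit thus yields the candidate equation
\[
 \frac{\prtl W_\alpha}{\prtl t} + \Ab_\alpha[t]\,W_\alpha + g_\alpha(t,x) = 0, \qquad W_\alpha(T,x) = D_{[\mu_T^\alpha;\,\mu^2_T-\mu^1_T]} V^T(x,\cdot),
\]
whose terminal datum exists by (B2) and already appeared in (\ref{15**}). This linear backward equation has the unique bounded mild solution $W_\alpha(t,\cdot) = \Lambda_\alpha^{t,T} W_\alpha(T,\cdot) + \int_t^T \Lambda_\alpha^{t,s} g_\alpha(s,\cdot)\,ds$, which is the claimed derivative.

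Finally I would justify the passage to the limit. Subtracting the mild relation for $W_\alpha$ from that for $W_\alpha^h$ and inserting the above splitting, each term is estimated in $\Norm{\cdot}_\infty$ using the uniform bound $\Norm{\Lambda_\alpha^{t,s}}\le c_3$ of Lemma \ref{aab}: the difference $\Lambda_{\alpha+h}^{t,s}-\Lambda_\alpha^{t,s}$ is controlled by Proposition \ref{aa}ii), which bounds it by a constant multiple of $(s-t)\sup_s\Norm{\Ab_{\alpha+h}[s]-\Ab_\alpha[s]}_{C_\infty\to C_\infty}$, and the latter supremum is $O(h)$ by (A3)/(B3) together with the Lipschitz dependence of $\hat\gamma_\alpha$ on $\alpha$; the envelope error in the Hamiltonian is $o(h)$ thanks to the strict concavity encoded in (B6). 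A Gronwall argument in the sup-norm then gives $\Norm{W_\alpha^h-W_\alpha}_\infty\to 0$ as $h\to 0$, uniformly in $(t,x)$, which establishes differentiability. I expect the \emph{main obstacle} to be precisely this envelope step, i.e. controlling the error from evaluating the $\alpha$-increment of $\max_\gamma\Theta$ at $\hat\gamma_\alpha$ rather than $\hat\gamma_{\alpha+h}$; this rests on the regularity of the map $\alpha\mapsto\hat\gamma_\alpha$, for which the eigenvalue bound on $\Theta_{uu}$ in (B6) together with the implicit function theorem yields at least Lipschitz (indeed $C^1$) dependence, exactly what makes the $\prtl_\alpha\hat\gamma_\alpha$ contribution drop out to first order and renders the difference-quotient limit well posed.
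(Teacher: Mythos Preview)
Your approach shares the paper's skeleton: recast $V_\alpha$ in Duhamel (mild) form via the backward propagator $\Lambda_\alpha^{t,s}$, then differentiate, using Proposition~\ref{aa} to show that $\alpha\mapsto\Lambda_\alpha^{t,s}$ is differentiable with bounded derivative. The paper simply writes
\[
 V_\alpha(t,\cdot)=\Lambda_\alpha^{t,T}V_\alpha^T+\int_t^T\Lambda_\alpha^{t,s}J_\alpha(s,\cdot)\,ds
\]
and then differentiates each factor in $\alpha$, invoking (B2) for $V^T_\alpha$ and $J_\alpha$ and Hypothesis~A together with Proposition~\ref{aa}i) for $\Lambda_\alpha$.

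Where you genuinely diverge is in your treatment of the dependence on the maximiser $\hat\gamma_\alpha$. The paper's proof absorbs this into the notation $J_\alpha(s,\cdot)=J(s,\cdot,\mu_s^\alpha,\hat\gamma_\alpha)$ and $\Ab_\alpha[t]=\Ab[t,\cdot,\mu_t^\alpha,\hat\gamma_\alpha]$ and then simply \emph{asserts} that these are differentiable in $\alpha$, without isolating the contribution of $\partial_\alpha\hat\gamma_\alpha$. You instead work at the level of the Hamiltonian $\max_\gamma\Theta$ and invoke the envelope (Danskin) theorem so that the $\partial_\alpha\hat\gamma_\alpha$ terms cancel to first order; this lets you get by with only the Lipschitz dependence of $\hat\gamma_\alpha$ on $\alpha$ (which is what the paper actually establishes, cf.\ Proposition~\ref{existenceV1} and Theorem~\ref{E*}), rather than needing $C^1$ regularity of the feedback. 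Your derived linear equation for $W_\alpha$ with source $g_\alpha$ and terminal datum $D_{[\mu_T^\alpha;\mu_T^2-\mu_T^1]}V^T$ is the explicit object the paper's argument implicitly produces but never writes down. In short: same backbone, but your envelope step fills a point the paper leaves tacit, and your formulation makes the candidate derivative concrete.
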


\begin{proof}
Let us assume that the maximum point of $V_\alpha$ is attained at point $\hat\gamma_\alpha$. We know that $V_\alpha(t,x)$ is a unique classical solution of equation (\ref{14}) hence it is a mild solution as well and by Duhamel's principle, see \cite{MCR}, it can be represented in the following form:
\begin{equation}
\label{Duhamel}
V_\alpha(t,.)= \Lambda^{t,T}_{\alpha} V^T_\alpha (.)+ \int_t^T {\Lambda^{t,T}_{\alpha}J_\alpha(s,.)ds} \quad \forall t \in [0,T], \quad x \in \RR^d .
\end{equation}
Moreover let $ \ V_{\alpha_i}(.)= V_{\alpha_i}(t,x;\mub^\alpha), \ J_{\alpha_i}(s,.)= J_{\alpha_i}(s,,x,\hat\gamma_{\alpha_i}),$
for any $\alpha_i \in [0,1], \quad i=1,2$.

The operator $\Ab_{\alpha}[t]$ is differentiable in $\alpha$ for each $t \in [0,T]$, and using  Proposition \ref{aa}i), we have that for $\alpha_1,\alpha_2 \in [0,1]$ with $\alpha_1 > \alpha_2$,

\[
 \frac{\Lambda_{\alpha_1}-\Lambda_{\alpha_2}}{\alpha_1-\alpha_2}
 = {\tfrac{1}{\alpha_1-\alpha_2}} \int_t^s\Lambda_{\alpha_1}^{t,r}(\Ab_{\alpha_1}[r]
          -\Ab_{\alpha_2}[r])\Lambda_{\alpha_2}^{r,s} dr,
\]
moreover, due to Hypothsis A the generator $A[t,x,\rho,u]$ is twice differentiable and bounded in the measure parameter $\rho$ to give:
\[
 \Norm{\frac{\partial \Lambda_{\alpha}^{t,s}}{\partial \alpha}}
  \le \! \lim_{\alpha_1 \rightarrow \alpha_2}\!  {\tfrac{1}{\alpha_1-\alpha_2}} \! \int_t^s \!\Norm{\Lambda_{\alpha_1}^{t,r}}  \Norm{(\Ab_{\alpha_1}[r]
      -\Ab_{\alpha_2}[r])}\Norm{\Lambda_{\alpha_2}^{r,s}} dr
  \le K\! \max_{r}\Norm{\frac{\partial \Ab_{\alpha}}{\partial \alpha}}_{\alpha=\alpha_2}
\]

where $K= T \max_{0\le r\le s\le T} \Norm{\Lambda_{\alpha_1}^{t,r}}\Norm{\Lambda_{\alpha_2}^{r,s}}$ and $\Norm{\cdot}$ denotes the operator norm.

Together with the assumptions that the mappings $\alpha \mapsto V^T_{\alpha}(.)$ and $\alpha \mapsto J_\alpha(t,.)$ are both differentiable with respect to the real parameter $\alpha$ for each $t \in [0,T]$ and the derivatives exist in $C_\infty(\RR^d)$, we find that the solution $V_{\alpha}(t,.)$ in (\ref{Duhamel}) is differentiable with respect to the real parameter $\alpha$ with bounded derivative for each for each $t \in [0,T]$.
\end{proof}
\begin{corollary}
\label{Alpha}
As a direct consequence of the Proposition \ref{AlphaDifferentiable} we have that the solution $V_\alpha$ of the HJB equation (\ref{14}) is Lipschitz continuous with respect to $\alpha \in [0,1].$ For a proof and more details see \cite{K1}.
\end{corollary}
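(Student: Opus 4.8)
The plan is to deduce Lipschitz continuity in $\alpha$ directly from the differentiability established in Lemma~\ref{AlphaDifferentiable} together with the fundamental theorem of calculus. First I would observe that the proof of Lemma~\ref{AlphaDifferentiable} yields more than the bare existence of the derivative $\frac{\prtl V_\alpha}{\prtl \alpha}(t,\cdot)$ in $C_\infty(\RR^d)$: it in fact provides a bound on this derivative that is uniform in both $\alpha\in[0,1]$ and $t\in[0,T]$. Differentiating the Duhamel representation (\ref{Duhamel}) term by term produces contributions from $\frac{\prtl \Lambda_\alpha^{t,T}}{\prtl \alpha}$ acting on the terminal data, from $\Lambda_\alpha^{t,T}$ acting on $\frac{\prtl V_\alpha^T}{\prtl \alpha}$, and the analogous pair inside the time integral involving $J_\alpha$.

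Each contribution is controlled by estimates already at hand: the operator-norm bound $\Norm{\frac{\prtl \Lambda_\alpha^{t,s}}{\prtl \alpha}} \le K \max_r \Norm{\frac{\prtl \Ab_\alpha}{\prtl \alpha}}$ from Lemma~\ref{AlphaDifferentiable}, the contraction bound $\Norm{\Lambda_\alpha^{t,s}} \le c_3$ from Lemma~\ref{aab}, and the boundedness of $\frac{\prtl V_\alpha^T}{\prtl \alpha}$ and $\frac{\prtl J_\alpha}{\prtl \alpha}$ in $C_\infty(\RR^d)$ secured by Hypothesis~B. Collecting them furnishes a single constant $L$ with
\[
 \sup_{\alpha\in[0,1]}\ \sup_{t\in[0,T]} \Norm{\frac{\prtl V_\alpha}{\prtl \alpha}(t,\cdot)}_\infty \le L .
\]

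Given this uniform bound the Lipschitz estimate is immediate. For $\alpha_1,\alpha_2\in[0,1]$ I would write
\[
 V_{\alpha_2}(t,\cdot) - V_{\alpha_1}(t,\cdot)
   = \int_{\alpha_1}^{\alpha_2} \frac{\prtl V_\alpha}{\prtl \alpha}(t,\cdot)\, d\alpha ,
\]
which is legitimate since the integrand is a continuous $C_\infty(\RR^d)$-valued function of $\alpha$, and then pass to the supremum norm to obtain
\[
 \Norm{V_{\alpha_2}(t,\cdot) - V_{\alpha_1}(t,\cdot)}_\infty
   \le \norm{\alpha_2-\alpha_1}\, \sup_{\alpha\in[0,1]} \Norm{\frac{\prtl V_\alpha}{\prtl \alpha}(t,\cdot)}_\infty
   \le L\,\norm{\alpha_2-\alpha_1} .
\]
As $L$ does not depend on $t$, this is precisely the claimed uniform Lipschitz continuity in $\alpha$.

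The step demanding the most care is verifying that the bound on the $\alpha$-derivative is genuinely uniform in $t$ (and $x$) rather than merely finite for each fixed $t$; this rests on the fact that the constant $K = T \max_{0\le r\le s\le T} \Norm{\Lambda_{\alpha_1}^{t,r}}\Norm{\Lambda_{\alpha_2}^{r,s}}$ appearing in Lemma~\ref{AlphaDifferentiable} is itself uniform, which the contraction property of the propagators in Lemma~\ref{aab} guarantees. Everything else is the routine passage from a uniformly bounded derivative to a Lipschitz constant.
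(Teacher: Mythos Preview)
Your argument is correct and is precisely the standard deduction of Lipschitz continuity from a uniformly bounded derivative via the fundamental theorem of calculus. The paper itself does not supply a proof of this corollary at all: it merely cites \cite{K1} and states the result as a direct consequence of Lemma~\ref{AlphaDifferentiable}. Your write-up therefore fills in what the paper omits, and the approach you take (uniform bound on $\partial_\alpha V_\alpha$ harvested from the Duhamel differentiation in Lemma~\ref{AlphaDifferentiable}, followed by integration in $\alpha$) is exactly the argument one would expect the citation to point to. Your closing remark about the uniformity in $t$ coming from the propagator bound in Lemma~\ref{aab} is the only point requiring any care, and you handle it correctly.
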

Under the assumptions A and B and the definition of the $\Ab_\alpha[t], J_\alpha(t,.)$ and $V_\alpha^T(x)$ respectively, for any
$ \mub^1 ,\mub^2 \in C([0,T],\MM)$ we get from equation (\ref{Functionalparameter}) and Corollary (\ref{Alpha}), by replacing $\alpha_1=1$ and $\alpha_2=0$, the following:

\begin{theorem}
\label{E***}
Under the previous assumptions and conditions we have that for any
$ \mub \in C([0,T],\MM)$ the solution of equation (\ref{14}) is uniformly Lipschitz continuous in $\mub$, i.e. for
 $\mub^1, \mub^2 \in C([0,T],\MM)$, there exist a
constant $k \geq 0$ such that

\begin{equation}
\label{15*}
\sup \limits_{(t,x) \in [0,T]\times \RR^d}
\Norm{V(t,x;\mub^1)-V(t,x;\mub^2)}_{\infty} \leq k \sup \limits_{t \in [0,T]} \Norm{\mu^1-\mu^2}^*\ .
\end{equation}
\end{theorem}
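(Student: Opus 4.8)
The plan is to express the increment on the left of (\ref{15*}) as an integral over the interpolation parameter $\alpha$ and then to bound the integrand uniformly. With $\mub^\alpha := \mub^1 + \alpha(\mub^2 - \mub^1)$ as in (\ref{Functionalparameter}), Lemma \ref{AlphaDifferentiable} guarantees that $\alpha \mapsto V_\alpha(t,x)$ is differentiable on $[0,1]$, so the fundamental theorem of calculus, exactly in the form already used for the terminal cost in (\ref{15**}), gives
\begin{equation*}
 V(t,x;\mub^2) - V(t,x;\mub^1) = \int_0^1 \frac{\prtl V_\alpha}{\prtl \alpha}(t,x)\, d\alpha .
\end{equation*}
Taking the supremum over $(t,x)$ and pulling it under the integral reduces the theorem to the uniform estimate $\Norm{\prtl_\alpha V_\alpha}_\infty \le k\, \sup_{t} \Norm{\mu^1_t - \mu^2_t}^*$ with the constant $k$ independent of $\alpha$; substituting $\alpha_1 = 1$, $\alpha_2 = 0$ then yields (\ref{15*}).

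To obtain this estimate I would differentiate the Duhamel representation (\ref{Duhamel}) in $\alpha$, producing four contributions: $(\prtl_\alpha \Lambda^{t,T}_\alpha)V^T_\alpha$ and $\Lambda^{t,T}_\alpha(\prtl_\alpha V^T_\alpha)$ from the terminal term, and the analogous pair $(\prtl_\alpha \Lambda^{t,s}_\alpha)J_\alpha$ and $\Lambda^{t,s}_\alpha(\prtl_\alpha J_\alpha)$ under the running-cost integral. The decisive observation is that $\prtl_\alpha \mub^\alpha = \mub^2 - \mub^1$, so the chain rule converts each $\alpha$-derivative into a G\^ateaux derivative in the fixed direction $\mu^2_t - \mu^1_t$. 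Hypotheses (B2) and (B3) then give $\Norm{\prtl_\alpha V^T_\alpha}$, $\Norm{\prtl_\alpha J_\alpha} \le c_1 \Norm{\mu^2_t - \mu^1_t}^*$ and $\Norm{\prtl_\alpha \Ab_\alpha} \le c_2 \Norm{\mu^2_t - \mu^1_t}^*$; feeding the last bound into the propagator estimate of Lemma \ref{AlphaDifferentiable} yields $\Norm{\prtl_\alpha \Lambda^{t,s}_\alpha} \le K c_2\, \sup_r \Norm{\mu^2_r - \mu^1_r}^*$. Combining these with $\Norm{\Lambda^{t,s}_\alpha} \le c_3$ (Lemma \ref{aab}) and the boundedness of $V^T_\alpha$ and $J_\alpha$ from Hypothesis B, and integrating the running-cost part over $[t,T]$, bounds all four terms by $C(T)\sup_t \Norm{\mu^1_t - \mu^2_t}^*$ uniformly in $\alpha$, $t$ and $x$.

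The main obstacle is the implicit $\alpha$-dependence of the optimal feedback $\hat\gamma_\alpha$: a naive differentiation of (\ref{Duhamel}) would also produce terms proportional to $\prtl_\alpha \hat\gamma_\alpha$, for which only the local Lipschitz control of Proposition \ref{existenceV1} is available. These are eliminated by an envelope (Danskin-type) argument: since $V_\alpha$ solves the HJB equation (\ref{14}) with the maximizing control, the first-order optimality condition $\Theta_u(t,x,\mu^\alpha_t,\hat\gamma_\alpha) = 0$ forces the first-order variation through $\hat\gamma_\alpha$ to vanish, so the total $\alpha$-derivative of $V_\alpha$ coincides with the partial derivative taken with $\hat\gamma_\alpha$ frozen. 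This is precisely the differentiability encoded in Lemma \ref{AlphaDifferentiable} and Corollary \ref{Alpha}; with it in place the remaining work is the collection of routine norm estimates described above.
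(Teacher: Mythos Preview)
Your proposal is correct and follows essentially the same route as the paper: interpolate via $\mub^\alpha$, invoke the differentiability of $V_\alpha$ in $\alpha$ established in Lemma~\ref{AlphaDifferentiable} (through the Duhamel representation and Proposition~\ref{aa}), and then substitute $\alpha_1=1$, $\alpha_2=0$ as in Corollary~\ref{Alpha} to read off the Lipschitz bound. You are in fact more explicit than the paper on two points it leaves implicit or delegates to the reference \cite{K1}: first, how the factor $\sup_t\Norm{\mu^1_t-\mu^2_t}^*$ emerges from the chain rule $\prtl_\alpha\mub^\alpha=\mub^2-\mub^1$ together with Hypotheses (B2)--(B3); and second, the envelope-type observation that the implicit $\alpha$-dependence through $\hat\gamma_\alpha$ contributes nothing at first order, which the paper's Lemma~\ref{AlphaDifferentiable} uses tacitly when it differentiates (\ref{Duhamel}) without producing a $\prtl_\alpha\hat\gamma_\alpha$ term.
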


The previous results remain valid if we replace the given deterministic curve $\mub$ by a part of it, $\mu_{\ge t}:= \{\mu_s \vert \mub\in C([0,T],\MM_1), t\le s\le T \}$ with $0\le t\le T$.
\vskip 0.2cm
The unique optimal control $\hat\gamma(t,x,\mu_{\ge t})$ is designated to serve as optimal strategy of an individual player in the construction of mean-field games. In the course of the construction the optimal feedback control needs additional regularity in the parameters.

\begin{theorem}
\label{E*}
Beyond the assumptions of the previous theorems and results assume additionally that the resulting unique optimal control
\[\argmax_\gamma (A[t,\rho,\gamma]V(t,x)+J(t,x,\rho,\gamma))\]
is continuous in $t \in [0,T]$ and Lipschitz continuous in $V$ uniformly with respect to $t,x,\mu$.
Then given a trajectory $ \mub \in C_{\mu}\left([0,T],\MM\right)$ and a final payoff $V^T$, the unique optimal control $\hat{\gamma}=\Gamma(t,x; \mu_{\ge t})$ defined via equations (\ref{12}), is Lipschitz continuous uniformly in $\mub$ i.e, for any $\etab,\mub \in C_{\mu}\left([0,T],\MM\right)$:

\begin{equation}
\sup_{t,x} \!\norm{\Gamma(t,x;\eta_{\ge t}) \!-\Gamma(t,x; \mu_{\ge t})} \!\leq k_1 \!\sup_{s \in [t,T]} \Norm{\eta_s \! - \mu_s}^{*},
\end{equation}
for all $ t \in [0,T], x \in \RR^d$.
\end{theorem}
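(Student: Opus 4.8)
The plan is to represent the optimal control $\Gamma(t,x;\mu_{\ge t})$ as the unique maximizer of
$\Theta(t,x,\mu_t,\gamma)=J(t,x,\mu_t,\gamma)+\Ab[t,x,\mu_t,\gamma]V(t,x;\mu_{\ge t})$
over $\gamma\in U$, its uniqueness being guaranteed by the strict concavity invoked in Remark \ref{uniquecontrol}, and then to track how this maximizer moves when $\mub$ is replaced by $\etab$. The trajectory enters $\Theta$ in two separate places: directly, through the instantaneous measure $\mu_t$ sitting inside the cost $J$ and the generator $\Ab$, and indirectly, through the value function $V(t,x;\mu_{\ge t})$, which depends on the whole tail $\mu_{\ge t}$. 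Writing the maximizer as a map $G(\rho,V)$ of the direct measure slot $\rho=\mu_t$ and the value function $V$, I would split the increment by a triangle inequality through the intermediate point $G(\mu_t,V(\cdot;\etab))$, so that one term isolates the direct dependence and the other the dependence through $V$.

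First I would handle the direct term. Here $V$ is frozen and only the measure inside $J$ and $\Ab$ is changed, so the comparison is between two maximizers of strictly concave functionals whose parameters differ. Adding the two optimality inequalities, the cross terms cancel and uniform concavity leaves a lower bound $c\,\norm{G(\eta_t,V)-G(\mu_t,V)}^2$ against the mixed increment of $\Theta$ in the measure slot, exactly the Fleming--Rishel mechanism already used in Proposition \ref{existenceV1}. That increment is controlled by Hypothesis B: the variational Lipschitz bounds (B2) on $J$ and (B3) on $\Ab$, together with (B5), give $\sup_\gamma\Norm{\Theta_u(\cdot;\eta_t)-\Theta_u(\cdot;\mu_t)}\le C\Norm{\eta_t-\mu_t}^{*}$, and dividing yields a bound linear in $\Norm{\eta_t-\mu_t}^{*}\le\sup_{s\in[t,T]}\Norm{\eta_s-\mu_s}^{*}$.

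The indirect term is exactly where the additional hypothesis of the present statement and Theorem \ref{E***} are used. The extra assumption states that the maximizer is Lipschitz continuous in $V$, uniformly in $t,x,\mu$, so $\norm{G(\mu_t,V(\cdot;\etab))-G(\mu_t,V(\cdot;\mub))}\le L\,\Norm{V(\cdot;\etab)-V(\cdot;\mub)}_{\infty}$; and Theorem \ref{E***} supplies $\sup_{t,x}\Norm{V(t,x;\etab)-V(t,x;\mub)}_{\infty}\le k\sup_{s}\Norm{\eta_s-\mu_s}^{*}$. Composing the two transfers the Lipschitz estimate for $V$ into a Lipschitz estimate for the maximizer through its $V$-dependence, again with a right-hand side proportional to $\sup_{s\in[t,T]}\Norm{\eta_s-\mu_s}^{*}$.

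Adding the two contributions gives the asserted inequality, with $k_1$ assembled from the concavity modulus $c$, the constants of (B2), (B3), (B5), the Lipschitz-in-$V$ constant $L$, and the constant $k$ of Theorem \ref{E***}. The main obstacle I anticipate is uniformity: since $V$ carries the entire future tail $\mu_{\ge t}$ while the direct slot sees only $\mu_t$, one must verify that the single quantity $\sup_{s\in[t,T]}\Norm{\eta_s-\mu_s}^{*}$ simultaneously dominates both perturbations for every $t$, i.e. that all constants are genuinely independent of $t$ and $x$. A secondary technical point is that the maximum of $\Theta$ may be attained on the boundary of $U$, so the first-order characterization must be read as a variational inequality on the convex $C^2$ set $U$; the concavity estimate above is, however, insensitive to whether the maximizer is interior or on the boundary, so the same bound persists.
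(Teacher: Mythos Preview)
Your proposal is correct and follows essentially the same route as the paper: the paper's own proof is a single sentence invoking precisely the three ingredients you use --- the extra Lipschitz-in-$V$ assumption of the theorem, the Fleming--Rishel concavity mechanism behind Proposition \ref{existenceV1}, and Theorem \ref{E***} --- and your argument is a careful unpacking of how those pieces fit together via the triangle-inequality split $G(\eta_t,V_\eta)\to G(\mu_t,V_\eta)\to G(\mu_t,V_\mu)$. Your attention to the boundary case and to the uniformity of constants in $t,x$ goes beyond what the paper spells out.
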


\begin{proof}
The proof is done by combing the assumptions of this Theorem and the results from Proposition \ref{existenceV1} and Theorem \ref{E***}.
\end{proof}

\begin{remark}\label{consistency}
The choice of the parameter $\mub \in C([0,T],\Pmass(\RR^d))$ with start point $\mu_0 = \mu$ will be determined by a fixed point argument, discussed at the end of this section. Let us denote the resulting unique optimal control by

\begin{equation}
\label{Optimacontrol}
\hat\gamma=\Gamma (t,x;\mu_{\ge t}).
\end{equation}

Theorem \ref{E*} allows us to verify the assumptions of Proposition \ref{nonlin-flow}, which will result in well-posedness of the non-linear kinetic equation problem (\ref{16*}) (or (\ref{16})) where the measure parameter $\rho$ and the control function $\gamma$ are replaced by $\mu_t$ and $\Gamma(t,x;\mu_{\ge t})$ respectively.
\end{remark}

The remaining part of this section is to show the existence of a fixed point in the space of flows of probability measures also denoted consistency condition. For the resulting unique curve in $\Pmass(\RR^d)$, a set of strategies can be derived using the optimization procedures explained above.
Let $C([0,T], \mathbf \Pmass(\RR^d))$ the set of continuous probability measure valued functions. The previous set forms a closed convex subset in $C([0,T], \MM_1)$, because $\Pmass(\RR^d)$ is a closed convex subset in $\MM_1$ see \cite{T} and \cite{BELL}.
To any $\mub \in C([0,T], \mathbf \Pmass(\RR^d))$, one can find the solution of the Hamilton Jacobi Bellman equation

\begin{equation}
 \frac{\prtl V(t,x)}{\prtl t}+\max_{\gamma}\left[J(t,x,\mu_t,\gamma)
 +\Ab[t,\mu_t,\gamma]V(t,x)\right]=0,
\end{equation}

from which one can derive the unique optimal control strategy
$\hat\gamma(t,x;\mu_{\ge t}), \forall s \in [t,T], x \in \RR^d$.
Injecting the feedback optimal control $\hat\gamma$ into the kinetic equation defines the mapping $T: \mub \mapsto \hat\mub$ where

\begin{equation}
	\dot{\hat{\mu_t}}=\Ab^*[t,\hat{\mu_t},\hat\gamma(t,x;\mu_{\ge t})]\hat{\mu_t}. \ \ \hat{\mu_0}=\mu, \ \ t \in [0,T]\ ,
\end{equation}

From previous results of Sections 3 and 4 we have that the mapping $T: \mub \rightarrow \hat{\mub}$ is continuous. By Banach-Alaoglu Theorem we get that the unit ball $\MM_1$ is a compact metrizable space with respect to the weak-topology. Together with compactness of $C([0,T], \MM_1)$, due to Arzela-Ascoli Theorem see \cite{KA}, and inequality (\ref{17}), we obtain that $T$ is a compact operator. One completes the proof via Schauder Fixed Point Theorem.

The mean-field consistency condition is incorporated in the equation
\begin{equation}
	\dot\mu_t=\Ab^*[t,\mu_t,\Gamma(t,x;\mu_{t})]\mu_t. \ \ \mu_0=\mu, \ \ t \in [0,T]\ .
\end{equation}
In our applications the solution of the kinetic equation and the control law constitute a fix point. In this case the regularity of the optimal
feedback control in the measure parameter needs to be sharpened from Lipschitz continuity to differentiability of order two.
The regularization introduced in the Appendix closes this gap.
%
\section{Law of Large Numbers: $\epsilon$-Nash equilibrium}

In Physics and Biology scaling limits and analyzing scaling limits are well established techniques which allow to focus on particular aspects of the system under consideration. Scaling empirical measures by a small parameter $h$ in such a way that the measure $h(\delta_{x_1}+\ldots + \delta_{x_N})$ remains finite when the number $N$ of particles or species tends to infinity and the individual
contribution becomes negligible allows to treat the ensemble as continuously
distributed.

Scaling $k^{th}$-order interactions by $h^{k-1}$ reflects the idea that they are more rare than $k-\ell$ order ones for $1 < \ell < k$ and makes them neither negligible nor overwhelming. This scaling transforms an arbitrary generator
$\Lambda_k$ of a $k^{th}$-order interaction into
\begin{displaymath}
  \Lambda_k^hF(h \delta_{\xb})
  = h^{k-1} \sum_{I \subset \left\{1, \cdots,n\right\},\left|I\right|=k}
  \int_{\XX^k}{}\left[F(h \delta_{\xb} - h\delta_{\xb_I}+h\delta_y)
  -F(h\delta_{\xb})\right] \times P(\delta_{\xb_I};dy)
\end{displaymath}
with positive kernel $P(\delta_{\xb_I};dy)$.
The $N$-mean field limit is a law of large numbers for the first order interactions
given by the $N$-mean field evolutions. For the special case of pure jump type
$N$-mean field evolutions, cf. (\ref{jumpgenerator}), we prove weak convergence to
the solution of the kinetic equation (\ref{16}) by exploiting properties of the
corresponding propagators. The procedure consists of introducing the scale
$h= \frac {1}{N}$ and as explained in Section 3 by unifying space, which is pursued by substituting $f(\xb)$ by $F(\frac 1{\norm{\xb}} \delta_{\xb})$, where $\norm{\xb}$ denotes the length of the vector.
For proving an $1/N$-Nash equilibrium we admit that one agent has a decision rule different from the one of the others and give estimates for the errors with respect to a limiting game.
The property exploited for proving the error estimates and consecutively the $N$-mean field limit is given in Proposition \ref{aa}.
\vskip 0.2cm
Let $\psi_N^{t,s}, t \leq s,$ be the $N$-mean-field propagator as in Remark \ref{psi_N}
and assume that $\phi^{s,t}$ is the Koopman propagator defined in (\ref{Koopman}).
Since $\AF^N$ is a bounded operator, the corresponding propagator $\psi_N^{t,s}$ is bounded as well. Exploiting Proposition \ref{aa}i) we derive an estimate for the deviation of the
propagator $\psi^{t,s}_N$ from the limiting Koopman propagator $\phi^{t,s}$ on a sufficiently rich class of functionals $C^2(\MM_1)$ forming a core
for the limiting generator.	
We first study the unrealistic case of a common initial condition $\mu$.
\vskip 0.1cm
By construction $\psi_N^{t,s}$ and $\phi^{t,s}$ satisfy the assumptions of
Proposition \ref{aa}, which reveals the representation:
\begin{equation*}
\left[(\psi_N^{s,t}-\phi^{s,t})F\right](\mu)
= \int_s^t {\left[\psi_N^{s,t}(\hat{\AF}^N[r,\mu]-\AK[r,\mu])
     \phi^{s,t}F\right](\mu) ds}
\end{equation*}
for $F \in C^2(\MM_1)$ and $\mu \in \Pmass_\delta^N(\RR^d)$, independent of the control parameter $u\in U$.
We continue by estimating
\begin{eqnarray}
\label{30}
&&\hspace{-8mm} \sup_{\mu \in \Pmass_\delta^N(\RR^d)}\norm{\left[(\psi_N^{s,t}-\phi^{s,t})F\right](\mu)}
  \!\leq\! \int_s^t  \Norm {\psi_N^{s,t}}
	             \sup_{\Atop{\mu \in \MM_1}{r\in[0,T]}}\!
                 \norm{(\hat{\AF}^N[r,\mu] -\AK[r,\mu]) \phi^{s,t}F(\mu) }ds\  \nonumber
   \\
 &&\leq\!\frac{(s-t)}{N}\Norm{\psi_N^{s,t}}\Norm{\hat{\AF}^N[r,\mu]-\AK[r,\mu]}
        \Norm{\phi^{s,t}} \Norm{F}_{C^2(\MM_1)} \!
 \leq\! \frac{C(T)}{N} \Norm{F}_{C^2(\MM_1)}
\end{eqnarray}
for $0 \leq t \leq s \leq T,$ and Proposition \ref{ANexpansion} was applied in the last step.
The constant $C(T)$ summarizing the three operator norms and integration with respect to time.

This estimate will in a further step be applied to estimate the order of convergence
in the mean-field limit. The initial conditions are chosen to suit the operators and hence
differ while $N$ changes. In fact we shall assume that the initial conditions satisfy
\begin{equation}
\label{300}
 \mu_0^N=\frac{1}{N}(\delta_{X_{0}^{N,1}}+...+\delta_{X_{0}^{N,N}}),
\end{equation}
with random variables ${X_{0}^{N,1}}, \ldots ,{X_{0}^{N,N}}$ and that they converge to a law $\mu_0 \in\Pmass(\RR^d)$ in such a way
that

\begin{equation}\label{301}
\Norm{\mu_0^N-\mu_0}^* \leq \frac{k_1}{N},
\end{equation}
with a constant $k_1 \geq 0$.
Let $C([0,T]\times \mathcal U, C^2(\MM_1))\subset  C([0,T]\times \MM_1 \times \mathcal U)$
be the subspace of continuous functionals $J(t,\rho,u)$, such that $J(t,.,u) \in C^2(\MM_1)$
for each $t,u$.
\begin{lemma}\label{FF}
Let Hypotheses A, B, and Proposition \ref{ANexpansion} be satisfied. Assume initial conditions $\fracN (\delta_{X_{1,0}^N}+...+\delta_{X_{N,0}^N})$ as in (\ref{301}) and a fixed control parameter $\gamma\in U$. Then the following holds:
i) For $t \in [0,T]$ with arbitrary $T \geq 0$:

\begin{equation*}
\norm{(\psi_{N,\gamma}^{0,t}F)(\mu_0^N)-(\phi^{0,t}_\gamma F)(\mu_0)}
\leq \frac{C(T)}{N} \left(\Norm{F}_{C^2(\MM_1)}+k_1\right)
\end{equation*}
with a constant $C(T)$ independent of $\gamma$;

ii) For $J$ on $[0,T]\times \MM_1\times U$:	
	\begin{equation*}
     \norm{{\int_t^T {\!\!J(s, \mu_{\gamma,s}^N, \gamma)ds}
     \! -\int_t^T {\!\!J(s,\mu_{\gamma,s},\gamma) ds}}}  \\
   \!\!\leq \frac{C(T)}{N} \left(\Norm{J}_{C([0,T]\times \mathcal U, C^2(\MM_1))} + k_1\right)
  \end{equation*}
	where $\mu_{\gamma,t}^N $ is the empirical law specified by the propagator $\psi_{N,\gamma}^{0,t}$ and $\mu_{\gamma,s}$ is the law given by the Koopman propagator $\phi^{0,t}_\gamma$ or equivalently $\mu_{\gamma,s}=\alpha(t,s,\mu_0,\gamma)$. For the $N$-mean field dynamics the notation reads
$(\psi_{N,\gamma}^{0,t}J)(\mu_0)= J(\mu_{\gamma,t}^N)$.
\end{lemma}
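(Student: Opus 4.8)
The plan is to reduce both parts of Lemma~\ref{FF} to the single--initial--condition estimate~(\ref{30}) already established, paying the extra price that comes from the fact that the propagators $\psi_{N,\gamma}^{0,t}$ and $\phi^{0,t}_\gamma$ must now act on \emph{different} starting measures $\mu_0^N$ and $\mu_0$. For part~i) I would split the difference by a triangle inequality through an intermediate term, writing
\begin{equation*}
 (\psi_{N,\gamma}^{0,t}F)(\mu_0^N)-(\phi^{0,t}_\gamma F)(\mu_0)
 = \bigl[(\psi_{N,\gamma}^{0,t}-\phi^{0,t}_\gamma)F\bigr](\mu_0^N)
 + \bigl[(\phi^{0,t}_\gamma F)(\mu_0^N)-(\phi^{0,t}_\gamma F)(\mu_0)\bigr].
\end{equation*}
The first bracket is bounded by $C(T)\Norm{F}_{C^2(\MM_1)}/N$ directly from~(\ref{30}), since $\mu_0^N\in\Pmass_\delta^N(\RR^d)$ is exactly the class of measures for which that estimate was proved. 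For the second bracket I would use the definition $(\phi^{0,t}_\gamma F)(\mu)=F(\alpha(0,t,\mu))$ together with the Lipschitz continuity of the flow $\alpha$ in its initial datum, i.e. inequality~(\ref{17}) of Corollary~\ref{b}, and the $C^1(\MM_1)$--regularity of $F$; this yields a bound proportional to $\Norm{\mu_0^N-\mu_0}^{*}$, which by the hypothesis~(\ref{301}) is at most $k_1/N$. Adding the two contributions gives the claimed bound $C(T)(\Norm{F}_{C^2(\MM_1)}+k_1)/N$.

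For part~ii) I would recognise the two integrands as values of the flow--pushed functional $J(s,\cdot,\gamma)$ evaluated along the two dynamics, so that for each fixed $s$ the integrand difference $J(s,\mu_{\gamma,s}^N,\gamma)-J(s,\mu_{\gamma,s},\gamma)$ is precisely of the form treated in part~i), with $F=J(s,\cdot,\gamma)\in C^2(\MM_1)$ and $\mu_{\gamma,s}=\alpha(0,s,\mu_0,\gamma)$, $\mu_{\gamma,s}^N$ the corresponding $N$--mean--field law. Applying the part~i) estimate pointwise in $s$, with $\Norm{F}_{C^2(\MM_1)}$ replaced by $\sup_s\Norm{J(s,\cdot,\gamma)}_{C^2(\MM_1)}\le\Norm{J}_{C([0,T]\times\mathcal U,\,C^2(\MM_1))}$, and then integrating over $s\in[t,T]$, absorbs the length of the interval into $C(T)$ and produces the stated bound. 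Here I would use that the constant $C(T)$ from part~i) is uniform in $s\in[0,T]$ and independent of $\gamma$, which is precisely what~(\ref{30}) and Corollary~\ref{b} guarantee.

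The main obstacle I expect is making the regularity bookkeeping fully rigorous rather than cosmetic. Specifically, controlling the second bracket in part~i) requires that $F\in C^2(\MM_1)$ compose with the flow $\alpha$ in a way that produces a genuine $\Norm{\mu_0^N-\mu_0}^{*}$--Lipschitz estimate with a constant controlled by $\Norm{F}_{C^1(\MM_1)}$; this rests on the chain rule for variational derivatives and on the uniform bounds $\sup_{x}\Norm{\deltab_{[x]}\alpha(t,s,\mu)}^{*}<\infty$ used in the proof of Proposition~\ref{bbb}i), so I would need to invoke those bounds carefully and check that they hold uniformly in $\gamma$. A secondary subtlety is the consistency of the two measure flows: one must verify that $\mu_{\gamma,t}^N$, defined as the law transported by $\psi_{N,\gamma}^{0,t}$ via the identification~(\ref{x2Px}), and $\mu_{\gamma,t}=\alpha(0,t,\mu_0,\gamma)$ really are the objects to which~(\ref{30}) applies, so that no hidden mismatch between the linear $N$--particle dynamics and the nonlinear kinetic flow is swept under the rug. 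Once these uniformities are secured, the rest is the routine triangle--inequality and integration argument sketched above.
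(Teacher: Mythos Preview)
Your proposal is correct and is essentially the argument the paper has in mind: the paper's own proof simply refers to Theorem~5.2 of \cite{BHK}, whose structure is precisely the triangle-inequality splitting you describe---first apply the propagator comparison~(\ref{30}) at the common starting point $\mu_0^N\in\Pmass_\delta^N$, then control the remaining term via the Lipschitz dependence of the Koopman flow on its initial datum (Corollary~\ref{b}) together with~(\ref{301}), and for part~ii) apply part~i) to $F=J(s,\cdot,\gamma)$ and integrate in~$s$. Your caveats about uniform-in-$\gamma$ constants and the identification of $\mu_{\gamma,t}^N$ via~(\ref{x2Px}) are the right places to be careful, but the paper handles them exactly as you indicate, through Proposition~\ref{bbb} and the boundedness hypotheses in~A.
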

\begin{proof}
The bounds hold uniformly for all $\gamma\in U$. The proof follows the line argument of the proof of Theorem 5.2 in \cite{BHK}, where the Euclidean unit ball is replaced by the set $\MM_1$ respectively the underlying spaces and the norms of the functionals $F$ and $J$ are specified by their indices.
\end{proof}

Let us now turn to the construction of the $\epsilon$-Nash equilibrium.
We start with the following definition.
\begin{definition}
A strategy portfolio $\Gamma$ in a game of $N$ agents with payoffs
$V_i(\Gamma), i=1,...,N,$ is called an $\epsilon$- Nash equilibrium if, for each
player $i$ and an acceptable individual strategy $u_i$
\[
 V_i(\Gamma) \geq V_i(\Gamma_{-i},u_i)- \epsilon,
\]
where $(\Gamma_{-i},u_i)$ denotes the profile obtained from $\Gamma$ by
substituting the strategy of player $i$ with $u_i.$
\end{definition}

We finally seek approximate Nash equilibria for $N$-mean field games, $N\in\NN$, i.e. we mean to show that deviating from the overall preference $\gamma$, which will finally be determined by the fixpoint in Section 4, does not improve the payoff apart from an infinitesimal error.
Therefore we introduce an additional player with a strategy $\tilde\gamma$ differing from the $\gamma$ of the remaining players entering the mean-field. We proceed with another auxiliary one player model, in which the $N$-mean-field acts as a single player and the differing preference $\tilde\gamma$ is part of it. This reveals refined estimates when comparing to the one player game with Koopman dynamics and uniform strategy $\gamma$. The dynamics of the $N$-mean field with one differing strategy is given by the generator:
\begin{equation}\label{31}
\hat{\AF}^N \![t,\mu^N\!, \gamma,\tilde\gamma] F(\mu^N\!)
= \left[\hat{\AF}^N \![t,\mu^N\!,\gamma]+\Ab^{N,1}\![t,\mu^N\!,\tilde\gamma (t,.)]
     - \Ab^{N,1}\![t,\mu^N\!,\gamma(t,.)]\right]\!\! F(\mu^N\!)
\end{equation}
where $\hat{\AF}^N $ and $\Ab^{N,1}=\Ab$ were defined in
(\ref{eqn:NMFgenerator}) and (\ref{jumpgenerator}), respectively,
$\mu^N=\frac 1{N} {\delta_{\xb}}$,  $F \in C^2 (\MM_1)$.

\begin{remark}\label{def:psiN}
Let $\psi_{N,\gamma,\tilde{\gamma}}^{0,t}$  the $N$-mean-field propagator on $C^2(\MM_1)$
generated by $\hat{\AF}^N [t,\gamma,\tilde{\gamma}]$.
Since $\hat\AF^N$ is a linear combination of the linear operators $\Ab$  and
$\hat{\AF}^N [t,\gamma]$ it inherits their properties, i.e. $\psi_{N,\gamma,\tilde{\gamma}}^{0,t}$ is a bounded linear Feller propagator
and it is Lipschitz continuous in the initial conditions.
\end{remark}

\begin{lemma}\label{G}
Suppose Hypotheses A and B hold and let $\psi_{N,\gamma,\tilde\gamma}^{0,t}$ and the Koopman propagator $\phi^{0,t}_\gamma$ as above with a class of functions
$\gamma: \RR^+ \times \mathbb\RR^d\rightarrow U,$ which are continuous in the first variable and Lipschitz continuous in the second one. Then we recover the estimates given in Lemma \ref{FF}i), ii).
\end{lemma}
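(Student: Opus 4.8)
The plan is to reduce Lemma \ref{G} to Lemma \ref{FF} by inserting the unperturbed $N$-mean-field propagator $\psi_{N,\gamma}^{0,t}$ as an intermediary and controlling the single tagged-player perturbation with Proposition \ref{aa}. First I would write the telescoping decomposition
\[
  \psi_{N,\gamma,\tilde\gamma}^{0,t}-\phi^{0,t}_\gamma
  =\left(\psi_{N,\gamma,\tilde\gamma}^{0,t}-\psi_{N,\gamma}^{0,t}\right)
   +\left(\psi_{N,\gamma}^{0,t}-\phi^{0,t}_\gamma\right).
\]
The second bracket, once evaluated at the initial data $\mu_0^N$ and $\mu_0$ and combined with (\ref{301}), is exactly the quantity estimated in Lemma \ref{FF}, so it already delivers a bound $\tfrac{C(T)}{N}(\Norm{F}_{C^2(\MM_1)}+k_1)$. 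Everything therefore reduces to showing the first bracket is also $O(1/N)$, uniformly in the two control laws $\gamma$ and $\tilde\gamma$.

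For the first bracket I would apply Proposition \ref{aa}\emph{i}) to the two bounded linear Feller propagators generated by $\hat{\AF}^N[s,\gamma,\tilde\gamma]$ and $\hat{\AF}^N[s,\gamma]$, both of which leave $C^2(\MM_1)$ invariant with $N$-independent norm by Remark \ref{def:psiN}, Remark \ref{psi_N} and Proposition \ref{ANexpansion}. This gives
\[
  \psi_{N,\gamma,\tilde\gamma}^{0,t}-\psi_{N,\gamma}^{0,t}
  =\int_0^t \psi_{N,\gamma,\tilde\gamma}^{0,s}
     \left(\hat{\AF}^N[s,\gamma,\tilde\gamma]-\hat{\AF}^N[s,\gamma]\right)
     \psi_{N,\gamma}^{s,t}\,ds,
\]
where, by the defining identity (\ref{31}), the middle operator is precisely the single-player difference $\Ab^{N,1}[s,\mu^N,\tilde\gamma]-\Ab^{N,1}[s,\mu^N,\gamma]$. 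The crucial observation is that, read through the identification (\ref{eqn:NMFgenerator}) as an operator on functionals of $\mu^N$, this difference carries an explicit factor $1/N$: by the expansion of Proposition \ref{ANexpansion}, extracting the single index $x_1$ produces at leading order $\tfrac1N\bigl(\Ab[s,x_1,\mu^N,\tilde\gamma]-\Ab[s,x_1,\mu^N,\gamma]\bigr)\deltab_{[x_1]}F(\mu^N)$ plus a second-order remainder that is itself $O(1/N)$. Using boundedness and Lipschitz continuity of $\nu$ in $u$ (Hypothesis A) together with (B3), this is bounded by $\tfrac{c}{N}\Norm{F}_{C^1(\MM_1)}$, hence a fortiori
\[
  \Norm{\Ab^{N,1}[s,\mu^N,\tilde\gamma]-\Ab^{N,1}[s,\mu^N,\gamma]}_{C^2(\MM_1)\to C(\MM_1)}\le \frac{c}{N},
\]
uniformly in $s,\gamma,\tilde\gamma$. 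Inserting this into Proposition \ref{aa}\emph{ii}) with the contraction/boundedness of both propagators yields $\sup_{\mu^N}\norm{[(\psi_{N,\gamma,\tilde\gamma}^{0,t}-\psi_{N,\gamma}^{0,t})F](\mu^N)}\le \tfrac{C(T)}{N}\Norm{F}_{C^2(\MM_1)}$, and combining with the Lemma \ref{FF} bound for the second bracket proves part \emph{i}). Part \emph{ii}) then follows verbatim, replacing $F$ by $J(s,\cdot,\gamma)$, using the representation $(\psi^{0,t}J)(\mu_0)=J(\mu_t^N)$ and the norm $\Norm{J}_{C([0,T]\times\mathcal U,\,C^2(\MM_1))}$ exactly as in Lemma \ref{FF}\emph{ii}).

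The step I expect to be the main obstacle is the $1/N$ bound on the single-player operator difference in the $C^2(\MM_1)\to C(\MM_1)$ norm, rigorously and uniformly in both control laws: one must justify that isolating one summand from the $N$-fold generator---whose full first-order part is $O(1)$---genuinely yields an $O(1/N)$ operator, and one must verify that the invariance of $C^2(\MM_1)$ under $\psi_{N,\gamma}^{s,t}$ holds with a norm bound independent of $N$ and of $\gamma,\tilde\gamma$, so that the constant $C(T)$ emerging from Proposition \ref{aa}\emph{ii}) does not secretly grow with $N$. Both are furnished by Proposition \ref{ANexpansion} and Remark \ref{psi_N}, but the uniformity in the control parameters is the point that requires care.
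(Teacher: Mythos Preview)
Your key ingredient---that the single-player perturbation $\Ab^{N,1}[s,\mu^N,\tilde\gamma]-\Ab^{N,1}[s,\mu^N,\gamma]$, read as an operator on functionals $F(\mu^N)$, carries an explicit factor $1/N$ via the Taylor expansion of Proposition~\ref{ANexpansion}---is correct and is exactly what drives the result. The difficulty lies in where you place that estimate.

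Your telescoping forces you, in the first bracket, to apply Proposition~\ref{aa} with an $N$-particle propagator $\psi_{N,\gamma}^{s,t}$ on the \emph{right}. Proposition~\ref{aa} then requires that this right-hand propagator leave the ``small'' space $D=C^2(\MM_1)$ invariant with an $N$-independent bound $c_2$. You assert this is furnished by Remark~\ref{psi_N} and Proposition~\ref{ANexpansion}, but neither does so: Remark~\ref{psi_N} only records that $\psi_N^{t,s}$ acts on $C^2(\MM)$, without any quantitative bound, and Proposition~\ref{ANexpansion} controls the generator as a map $C^2(\MM_1)\to C(\MM_1)$, not $C^2\to C^2$. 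Obtaining an $N$-uniform $C^2$-bound for $\psi_{N,\gamma}^{s,t}$ would require controlling further variational derivatives and is not available from the paper's toolkit. So the constant you extract from Proposition~\ref{aa}\emph{ii}) could, a priori, blow up with $N$, and your acknowledgment of this obstacle is not resolved by the references you cite.

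The paper's route (mirroring the computation~(\ref{30}) and the argument of Theorem~6.2 in \cite{BHK}) avoids this entirely by comparing $\psi_{N,\gamma,\tilde\gamma}^{0,t}$ \emph{directly} with $\phi_\gamma^{0,t}$, so that the Koopman propagator $\phi_\gamma^{s,t}$---whose $N$-independent boundedness on $C^2(\MM_1)$ \emph{is} established in Proposition~\ref{bbb}---sits on the right in Proposition~\ref{aa}\emph{i}). The generator difference then splits as
\[
  \hat{\AF}^N[s,\gamma,\tilde\gamma]-\AK[s,\gamma]
  =\bigl(\hat{\AF}^N[s,\gamma]-\AK[s,\gamma]\bigr)
   +\bigl(\Ab^{N,1}[s,\mu^N,\tilde\gamma]-\Ab^{N,1}[s,\mu^N,\gamma]\bigr),
\]
and both summands are $O(1/N)$ on $C^2(\MM_1)$: the first by Proposition~\ref{ANexpansion} exactly as in~(\ref{30}), the second by your own argument. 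On the left one only needs $\psi_{N,\gamma,\tilde\gamma}^{0,s}$ bounded on $C(\MM_1)$, which is immediate from the Feller property (Remark~\ref{def:psiN}). This delivers part~\emph{i}) with constants independent of $\gamma,\tilde\gamma$, and part~\emph{ii}) follows as you indicate.
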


The proof follows the lines of the one of Theorem 6.2 in \cite{BHK} with $M\subset\RR^k$,        $x_0^N, x_0, x_{\gamma,\tilde{\gamma}}\in\RR^k$ being replaced by
$\MM_1, \mu_0, \mu_0^N, x_{\gamma,s}, \mu_{\gamma,\tilde{\gamma},s}, \mu_{\gamma,s}$,
where $\mu_{\gamma,\tilde\gamma,t}^N$ is the empirical law of the process specified by the propagator $\psi_{N,\gamma,\tilde\gamma}^{0,t}$ and $\mu_{\gamma, t}$ is
the solution of the  kinetic equation (\ref{16}) with initial value $\mu_0$. The norms for the different spaces have been introduce above.

In order to construct an approximate Nash equilibrium, let the first player have a differing preference and assume that in this case $J$ depends on the state of a tagged player, her differing strategy, and the empirical mean. Hence we have to look at the pairs $(X^{N,1}_{t},\mu_t^N)$, $N\in \NN$ which refer to a chosen tagged agent and an overall mass.
\vskip 0.2cm
Let $C_\infty^{2,2}(\RR^d \times \MM_1)\subset C_\infty^2(\RR^d \times \MM_1)$ denote the subspace of functionals $F(x,\mu)$ on $\RR^d \times \MM_1$, such that for each $\mu \in \MM_1$, the functional $F(.,\mu) \in C^2_\infty(\RR^d)$ and for each $x \in \RR^d, F(x,.) \in C^2(\MM_1)$. Moreover,
let $C([0,T]\times \mathcal U, C_\infty^{2,2}(\RR^d \times \MM_1))$ be the subspace of continuous functionals $F(t,x,\mu,u) \in C_\infty([0,T]\times \RR^d \times \MM_1 \times \mathcal U)$,
such that for each $t,\mu,u$ the functional $F(t,.,\mu,u) \in C_\infty^2(\RR^d)$ and $ F(t,x,.,u) \in C^2(\MM_1)$ for each $t,x,u$.\\
The generators of the pairs $(X^{N,1}_{t},\mu_t^N)$ of processes are defined on the space $C_\infty^{2,2} (\RR^d\times \MM_1)$ and take the form
\begin{equation}\label{ANtag}
 \hat{\AF}^N_{tag}[t,x_1,\mu^N,\gamma,\tilde{\gamma}]F(x_1,\mu^N)
 :=\left(\Ab^{N,1}[t,\mu^N,\tilde\gamma]
   +\hat{\AF}^N[t,\mu,\gamma,\tilde{\gamma}]\right)F(x_1,\mu^N),
\end{equation}
with $\hat{\AF}^N[t,\mu^N,\gamma,\tilde{\gamma}]$ as in (\ref{31}).

\begin{remark}\label{def:phiNtag}
i) The propagator associated with the generator $\hat{\AF}_{tag}^N$ will be denoted by
$\xi_{N,\gamma,\tilde\gamma}^{0,t}$. It possesses the same properties as the propagators $\Lambda$ and $\psi_{N,\gamma,\tilde\gamma}^{0,t}$ i.e. it is a bounded Feller propagator, Lipschitz continuous in the initial condition.

ii) Let $\phi_{\gamma,tag}^{0,t}$ the propagator generated by the family
\begin{equation}\label{32}
\Ab^{1}[t,\mu,\tilde{\gamma}] + \AK[t,\mu,\gamma]
\end{equation}
on $C_\infty^1 (\RR^d\times \RR^k)$. Since the operator $\Ab^{1}=\Ab$ is bounded and more regular in the parameters than $\mathcal A$, the propagator $\phi^{0,t}_{tag,\gamma}$ inherits the properties of the Koopman propagator $\phi^{0,t}_\gamma$. Here we mention in particular that $\phi_{tag,\gamma}^{0,t}$ is a strongly continuous contraction, Lipschitz continuous in the initial condition.

iii) By inserting (\ref{31}) into the definition and by applying Proposition \ref{ANexpansion} we find:
\begin{equation}\label{33}
 \hat{\AF}^N_{tag}[t,x_1, \mu^N,\gamma,\tilde{\gamma}]F(x_1, \mu^N )
  =  \left(\Ab^{N,1}[t, \mu^N,\tilde{\gamma}]
     + \AK[t,\mu^N,\gamma]\right)F(x_1,\mu^N)+O(\fracN).	
\end{equation}
\end{remark}

For the Kolmogorov equation corresponding to this generator we make the
assumption that the initial conditions
$X_{0}^{N,1}$ converge to $X_{0}^1 \in \RR^d$ as $N \rightarrow \infty$, such that
for $k_2 >0$
\begin{equation}\label{34}
 \norm{X_{0}^{N,1}-X_{0}^1} \leq \frac{k_2}{N}.
\end{equation}
\begin{lemma}\label{H}
Let  Hypotheses A and B and the initial conditions (\ref{300}) and (\ref{34}) hold. Assume $\phi^{0,t}_{tag,\gamma,}$
$\xi^{0,t}_{N,\gamma,\tilde\gamma}$, and the cost function $J$ to be as above. Then the following bounds exist for $t \in [0,T]$, $T > 0$:

i) For $F\in C^{2,2} (\RR^d \times \MM_1)$ we have
  \begin{equation*}
	\norm{(\xi_{N,\gamma,\tilde\gamma}^{0,t} F)(X_{0}^{N,1},\mu_0^N)
     -(\phi^{0,t}_{tag,\gamma}F)(X_{0}^1,\mu_0)}
     \leq \frac{C(T)}{N} \left(\Norm{F}_{C_{\infty}^{2,2}(\RR^d \times\MM_1)}+k_1\right)
  \end{equation*}
  with a constant $C(T)$ not depending on $\tilde{\gamma}$;
\vskip 0.1cm
ii) For $J(t,x,\mu,u) \in C([0,T] \times U,C_{\infty}^{2,2}(\RR^d \times \MM_1)$:
  \begin{equation*}
  \begin{split}
	&\norm{\EE {\int_t^T J(s,X_{\tilde\gamma,s}^{N,1},\mu^N_{s,\gamma,\tilde{\gamma}},
     \tilde{\gamma}(s,X_{\tilde\gamma,s}^{N,1}))ds}
     -\EE {\int_t^T J(s,X^1_{\gamma,s},\mu_{s,\gamma},\gamma(s,X^1_{\gamma,s}))ds} } \\
	&\leq \frac{C(T)}{N} \left((T+k_2)\Norm{J}_{C([0,T] \times U,C^{2,2}_{\infty}(\RR^d \times \MM_1)}+k_1\right),
	\end{split}
  \end{equation*}

where the pair $\left(X_{\tilde\gamma,s}^{N,1},\mu^N_{t,\gamma,\tilde{\gamma}}\right)$ is the
Markov process specified by the propagator
$\xi_{N,\gamma,\tilde\gamma}^{0,t}$, and the component processes $X_{\tilde\gamma,s}^{N,1}$ is corresponds to $\Ab^{N,1}[t,\mu_{t,\gamma},\tilde\gamma]$.
$\mu_{s,\gamma}$ is the solution to the kinetic equation (\ref{16}) with initial
condition $\mu_0$.
\end{lemma}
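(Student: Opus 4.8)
The plan is to reduce the tagged-player estimate of Lemma~\ref{H} to the already-established propagator comparison of Lemma~\ref{FF} (via Lemma~\ref{G}) by treating the pair $(X^{N,1}_t,\mu^N_t)$ as a single process on the product space $\RR^d\times\MM_1$. The whole argument rests on Proposition~\ref{aa}i), which gives the exact Duhamel-type identity
\[
 \left(\xi_{N,\gamma,\tilde\gamma}^{0,t}-\phi^{0,t}_{tag,\gamma}\right)F
  = \int_0^t \xi_{N,\gamma,\tilde\gamma}^{0,s}
     \left(\hat{\AF}^N_{tag}[r,\cdot]-\phi_{tag,\gamma}\text{-generator}\right)
      \phi^{s,t}_{tag,\gamma}F\, ds,
\]
so the task is to estimate the difference of the two generators on the core $C^{2,2}_\infty(\RR^d\times\MM_1)$ and to control the boundedness constants of the propagators. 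The key input is the expansion (\ref{33}) in Remark~\ref{def:phiNtag}iii), which states precisely that $\hat{\AF}^N_{tag}$ differs from the generator $\Ab^1[t,\mu,\tilde\gamma]+\AK[t,\mu,\gamma]$ of $\phi^{0,t}_{tag,\gamma}$ only by an $O(1/N)$ term; this is the analog on the product space of what Proposition~\ref{ANexpansion} provided for $\hat{\AF}^N$ alone.

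For part i) I would proceed in three steps. First, split the deviation into the contribution from the differing generators evaluated at a \emph{common} initial point, and the contribution from the differing initial points $(X_0^{N,1},\mu_0^N)$ versus $(X_0^1,\mu_0)$. The first piece is handled by inserting the $O(1/N)$ generator bound from (\ref{33}) into the integral identity above, using $\Norm{\xi_{N,\gamma,\tilde\gamma}^{0,s}}\le c$ and $\Norm{\phi^{s,t}_{tag,\gamma}}\le c$ (Feller contractions, Remark~\ref{def:psiN} and Remark~\ref{def:phiNtag}ii), together with the invariance of $C^{2,2}_\infty$ under $\phi^{s,t}_{tag,\gamma}$; integration in time yields the factor $C(T)/N$ multiplying $\Norm{F}_{C^{2,2}_\infty(\RR^d\times\MM_1)}$. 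The second piece uses Lipschitz continuity of $\xi_{N,\gamma,\tilde\gamma}^{0,t}$ in its initial condition (Remark~\ref{def:phiNtag}i) against the initial-data bounds (\ref{301}) and (\ref{34}), producing the additive $k_1/N$ term. Combining the two gives exactly the stated bound, and since $\tilde\gamma$ enters only through the uniformly bounded difference $\Ab^{N,1}[\cdot,\tilde\gamma]-\Ab^{N,1}[\cdot,\gamma]$, the constant $C(T)$ is independent of $\tilde\gamma$.

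For part ii) I would apply part i) to the running and terminal cost. Writing $\EXP[\int_t^T J\,ds]$ as the action of the respective propagators on $J$ evaluated along the two dynamics, I would integrate the pointwise bound of i) against $s\in[t,T]$, tracking separately the contribution where $J$ depends on the tagged state $X^{N,1}$ (which brings in the $k_2/N$ term through (\ref{34}) and the Lipschitz dependence of $J$ on $x$ from Hypothesis B) and the contribution through the empirical mean. The factor $(T+k_2)$ in front of $\Norm{J}_{C([0,T]\times U,C^{2,2}_\infty)}$ arises from the time integration together with the tagged-initial-condition error, while the $k_1$ again tracks the empirical-measure initial data. This is precisely the product-space analog of Lemma~\ref{FF}ii), and the proof transfers along the lines of Theorem~6.2 in \cite{BHK}.

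The main obstacle is the uniformity of the constant $C(T)$ in the differing strategy $\tilde\gamma$ and the verification that $C^{2,2}_\infty(\RR^d\times\MM_1)$ genuinely serves as a common invariant core. One must check that applying the full tagged generator $\hat{\AF}^N_{tag}$ to an $F\in C^{2,2}_\infty$ keeps the requisite second variational derivatives in $\mu$ and second spatial derivatives in $x_1$ uniformly bounded, so that the remainder term in (\ref{33}) is genuinely $O(1/N)$ with a constant controlled by $\Norm{F}_{C^{2,2}_\infty}$ and \emph{not} by $\tilde\gamma$; this is where the uniform boundedness of the kernel (A1) and the existence of $\deltab^2_{[\rho,h]}\nu$ (A4) are indispensable. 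Once this uniform core estimate is in place, the rest is the routine propagation-of-errors bookkeeping already carried out in the cited references.
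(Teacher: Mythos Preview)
Your proposal is correct and follows essentially the same approach as the paper. The paper's own proof consists of a single sentence referring to Theorem~5.6 of \cite{BHK} and noting that the finite product space $\{1,\dots,k\}\times M$ there is to be replaced by $\RR^d\times\MM_1$ with the norms introduced above; what you have written is precisely a detailed unpacking of that cited argument---the Duhamel identity from Proposition~\ref{aa}, the $O(1/N)$ generator comparison from (\ref{33}), and the separate treatment of the initial-data error via (\ref{301}) and (\ref{34}). One small slip: at the end you cite Theorem~6.2 of \cite{BHK}, which is the reference the paper uses for Lemma~\ref{G}; for the present lemma the paper points to Theorem~5.6 of \cite{BHK}.
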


\begin{proof}
The proof follows the general line of arguments given in Theorem 5.6 of \cite{BHK}. The space $\{1,\ldots,k\}\times M$, where $M$ is a subset of the Euclidean unit ball, is replaced by $\RR^d \times \MM_1$. The appropriate norms are specified above.
\end{proof}
The results of this and the previous two sections, and Theorem \ref{H} in particular are based on a feedback control as parameter, depending on time however. The mean-field game methodology implies that the feedback control depends on the law of the limiting dynamics respectively the solution of the kinetic equation and vice versa. Therefore the feedback control has to be twice continuously differentiable with respect to the measure parameter. The lacking regularity is compensated by regularization as described in the Appendix.

Let the kernels $\nu(t,x,\mu,dy,\gamma(t,x;\mub))$ and $\nu(t,x,\mub^N,dy,\gamma(t,x;\mub^N))$, respectively, as before. However, replace $\gamma$ by the mollified version $\Phi_{\delta}[\gamma_j]$, $1\le j\le m$, defined in the Appendix, which is of $C^2$ type with respect to the measure argument and define the corresponding generators
$\hat{\AF}^N_{tag,\delta}:= \hat{\AF}^N_{tag}[t,x,\mu,dy,\gamma(t,x;(\Phi_{\delta}[\gamma_j])_j)]$ and
$\mathcal A_{tag,\delta}:= \mathcal A_{tag}[t,x,\mu,dy,\gamma(t,x;(\Phi_{\delta}[\gamma_j])_j)] $.
The construction insures that the properties of the corresponding propagators
$\phi^{s,t}_{tag,\delta}, \psi^{s,t}_{N,\delta}$ are preserved.  By Proposition \ref{aa} (ii) we get that
\begin{equation}
\begin{split}
& \Norm{\psi^{s,t}_{N}F-\phi^{s,t}_{tag,\delta}F}\leq (t-s) \sup_{t,s \in [0,T]} \Norm{(\hat{\AF}^N_{tag}-\mathcal A_{tag,\delta})\phi^{s,t}_{tag,\delta}F}\\
& \leq (t-s) \sup_{t,s \in [0,T]}  \left(\Norm{\left(\hat{\AF}^N_{tag}- \hat{\AF}^N_{tag,\delta}\right)\phi^{s,t}_{tag,\delta}F}+\Norm{\left(\hat{\AF}^N_{tag,\delta}-\mathcal A_{tag,\delta}\right) \phi^{s,t}_{tag,\delta}F}\right)
\end{split}
\end{equation}
and
\[
\begin{split}
\Norm{\left(\hat{\AF}^N_{tag}- \hat{\AF}^N_{tag,\delta}\right)\phi^{s,t}_{tag,\delta}F} &\leq C(w)(\epsilon(N)+ \frac{1}{j}+\delta(j+1)^d)\Norm{\phi^{s,t}_{tag,\delta}F}_{bLip} \\
& \leq C(w,t) (\epsilon(N)+ \frac{1}{j}+\delta(j+1)^d) \Norm{F}_{bLip}
\end{split}
\]
where
\[\norm{\gamma(t,x;\mub^N)-\gamma(t,x;\mub)}\leq \epsilon(N)\ . \]
From Proposition (\ref{bbb}) follows that $\Norm{\phi^{s,t}_{tag,\delta}F}_{C^2(\MM_1)}\leq C \Norm{F}_{C^2(\MM_1)}$
in straight forward manner. Furthermore, using (\ref{Converge}) we have that:
\begin{equation*}
\norm{(\phi^{s,t} F)(\mu_0)-(\phi^{s,t}_{tag,\delta}F)(\mu_0)}
\leq \Norm{F}_{C^2(\MM_1)}\delta t C(w,t).
\end{equation*}
Hence, we find:
\[
\Norm{\left(\hat{\AF}^N_{tag,\delta}-\mathcal A_{tag,\delta}\right) \phi^{s,t}_{tag,\delta}} \!\leq \frac{C(w,t)}{N} \Norm{\phi^{s,t}_{tag,\delta}F}_{C^2(\MM_1)} \!\leq \frac{C(w,t)}{N} \Norm{F}_{C^2(\MM_1)} \!\left( 1+\frac{1}{\delta}\right)
\]
Hence by choosing $j=N^{\beta}$ and $\delta=\frac{1}{N^{(1-\beta)}}$ with $\beta=\frac{1}{2+d}$  we will have that the rate of convergence will be of $\frac{1}{N^{1/(2+d)}}+ \epsilon(N)$ order.

\begin{theorem}
Let $ \{\Ab[t, \mu, u] \mid t \geq 0, \mu \in \MM_1, u \in \mathcal{U}\}$ be the family of jump type operators given in (\ref{Jumpoperator}) and $\mu$ be the solution to equation (\ref{16}). Assume the following:
\vskip 0.05cm
i) The kernel $\nu(t,\mu,u_t)$ satisfies Hypotheses A and B.
\vskip 0.05cm
ii) The form of $\max \Theta(t,x,\mu,u)$ is given by (\ref{14}).
\vskip 0.05cm
iii) The terminal function $V^T $ is in $C^{2,2}_\infty(\RR^d \times \MM_1)$.
\vskip 0.05cm
iv) The runing cost function $J \in C([0,T]\times\mathcal U, C_\infty^{2,2}(\RR^d \times \MM_1))$.
\vskip 0.05cm
v) The initial conditions
$\mu_0^N = \fracN (\delta _{X_{1,0}^N} + ... +\delta _{X_{N,0}^N})$ of an $N$
player game converge in $(C^2_\infty(\RR^d))^*$ , as $N \rightarrow \infty $, to a
probability law $\mu_0 \in  \Pmass(\RR^d)$ in a way that (\ref{301}) and
(\ref{34})are satisfied.
\vskip 0.05cm
vi) Let $\mub$ be the flow of probability measures induced by the solution of the kinetic equation (\ref{16}) and assume that for the strategy profile $\Gamma(t,x;\mu_t)$ of feedback form the HJB equation (\ref{14}) is satisfied such that Remark \ref{consistency} is valid, i.e. consistency holds.
\vskip 0.12cm
Then the strategy profile $\Gamma(t, x;\mu_t)$, is an $\epsilon$-Nash equilibrium
in an $N$ player game, with
\[
 \epsilon = (\frac{1}{N^{1/(2+d)}}+ \epsilon(N)) (\Norm{J}_{C([0, T] \times\mathcal{U},
                C^{2,2}_{\infty} (\RR^d\times \MM_1)} + \Norm{V^T}_{C^{2,2}_\infty (\RR^d \times \MM_1) }+1).
\]
\end{theorem}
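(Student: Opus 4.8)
The plan is to use the indistinguishability of the players to reduce the $N$ inequalities defining an $\epsilon$-Nash equilibrium to a single one, and then to transport that comparison to the mean-field limit, where the optimality of $\Gamma$ is available by construction. Since the generator $\AF^N$ in (\ref{jumpgenerator}) is symmetric under permutations of the players, it suffices to treat player $1$. I fix an arbitrary admissible feedback law $\tilde\gamma$ for player $1$, keep the common strategy $\gamma:=\Gamma(t,x;\mu_t)$ for the remaining players, and denote by $V^N_1(\Gamma)$ and $V^N_1(\Gamma_{-1},\tilde\gamma)$ the payoffs (\ref{valuefunction-N}) of player $1$ without and with the deviation. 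In the deviating game the pair $(X^{N,1}_t,\mu^N_t)$ evolves under the tagged generator $\hat\AF^N_{tag}$ of (\ref{ANtag}), while in the undeviated game it evolves under $\hat\AF^N$.

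First I would replace each $N$-player payoff by its mean-field counterpart. Writing $\mub$ for the flow solving the kinetic equation (\ref{16}) under $\gamma$ and $V$ for the limiting value function (\ref{V-MF-limit}), Lemma \ref{FF} in the functional form of Lemma \ref{G} gives a bound $\norm{V^N_1(\Gamma)-V(0,X^1_0;\mub)}\le \varepsilon_N$, while Lemma \ref{H}\,ii), together with the expansion (\ref{33}) of Remark \ref{def:phiNtag}, gives $\norm{V^N_1(\Gamma_{-1},\tilde\gamma)-\bar V^{\tilde\gamma}(0,X^1_0;\mub)}\le \varepsilon_N$. Here $\bar V^{\tilde\gamma}$ is the payoff of a single tagged player using $\tilde\gamma$ against the mean-field flow $\mub$, and the decisive structural point carried by (\ref{33}) is that one deviating player leaves the limiting mean field invariant, so $\mub$ remains the flow generated by $\gamma$.

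The heart of the argument is the optimality of $\Gamma$ in the limiting problem. By the consistency built into Remark \ref{consistency}, the pair $(\mub,\Gamma)$ is the fixpoint, so $\gamma=\Gamma$ attains the supremum in (\ref{V-MF-limit}) for the frozen mean field $\mub$; hence Proposition \ref{existenceV} yields $\bar V^{\tilde\gamma}(0,X^1_0;\mub)\le V(0,X^1_0;\mub)$. Chaining the two transport estimates with this inequality gives $V^N_1(\Gamma_{-1},\tilde\gamma)\le V^N_1(\Gamma)+2\varepsilon_N$, which is precisely the $\epsilon$-Nash inequality for player $1$, and by symmetry for every player.

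The main obstacle is that by Theorem \ref{E*} the fixpoint control $\Gamma(t,x;\mu_t)$ is only Lipschitz in the measure argument, whereas the transport estimates rest on Proposition \ref{aa} and the expansion of Proposition \ref{ANexpansion}, which require the generators to act on the $C^2(\MM_1)$ core. This gap is closed by passing to the mollification $\Phi_\delta[\gamma_j]$ of the Appendix and running the triangle inequality displayed just before the statement: the choice $j=N^{\beta}$, $\delta=N^{-(1-\beta)}$ with $\beta=\tfrac{1}{2+d}$ balances the mollification error $\tfrac1j+\delta(j+1)^d$ against the propagator error $\tfrac1N(1+\tfrac1\delta)$, so that $\varepsilon_N$ degrades from order $1/N$ to order $N^{-1/(2+d)}+\epsilon(N)$. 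Propagating this regularised rate through both comparisons and collecting the constants $C(T)$, $k_1$ of (\ref{301}) and $k_2$ of (\ref{34}) into the single prefactor shows that one may take
\[
 \epsilon=\Bigl(\tfrac{1}{N^{1/(2+d)}}+\epsilon(N)\Bigr)\bigl(\Norm{J}_{C([0,T]\times\mathcal U,C^{2,2}_\infty(\RR^d\times\MM_1))}+\Norm{V^T}_{C^{2,2}_\infty(\RR^d\times\MM_1)}+1\bigr),
\]
which is the asserted bound.
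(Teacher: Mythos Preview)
Your proposal is correct and follows essentially the same approach as the paper: reduce to player~1 by symmetry, transport both the undeviated and the deviated $N$-player payoffs to the mean-field limit via Lemmas~\ref{FF}/\ref{G} and~\ref{H}, invoke optimality of $\Gamma$ in the limiting single-player problem (the consistency of Remark~\ref{consistency}), and absorb the Lipschitz-only regularity of $\Gamma$ via the mollification of the Appendix to obtain the $N^{-1/(2+d)}+\epsilon(N)$ rate. The paper writes the argument as a single chain of three inequalities, citing Lemma~\ref{H}\,ii) for the first transport, consistency for the middle one, and Lemma~\ref{G}\,i) for the last, whereas you phrase it through two-sided bounds and then chain; you also fold the regularization step explicitly into the proof rather than relying on the discussion preceding the theorem. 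These are presentational differences only.
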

\begin{proof}
Due to Assumptions i)-iv) the HJB equation of the game with one player and dynamics given by the generator $\Ab$ in (\ref{Jumpoperator}) and mean field $\mu_t$ given by Corollary \ref{b} admits a unique optimal feedback control law $\Gamma=\Gamma(t,x;\mu_t)$. By Remark \ref{consistency} the pair $(\mu_t,\Gamma(t,x;\mu_t))$ satisfies the mean field consistency condition. Then the approximate Nash equilibrium follows from the subsequent chain of inequalities, where we
exploit the estimates derived previously under the assumptions i)-vi). To prove the first inequality we apply Lemma \ref{H} ii). Remark~\ref{consistency} with equation (\ref{Optimacontrol}), i.e. the mean field consistency, establishes the second inequality. The final third inequality is guaranteed by Lemma \ref{G}i). This reads:
\begin{eqnarray*}
&&\hspace{-1.5cm}\lefteqn{V^N (0,x^N_{1,0},\mu_{0}^N,\Gamma)
  = \EXP \int_0^T{J (s,X_{\Gamma,s}^{N,1},\mu^N_{\Gamma,s},\Gamma_s)ds}
  \geq \EXP \int_0^T{J (s,X_{\Gamma,s}^{1},\mu_{\Gamma,s},\Gamma_s)ds} -\epsilon }\\
&&\geq \EXP \int_0^T{J (s,X_{\Gamma,s}^{1},\mu_{\Gamma,\gamma,s},\gamma_s)ds} -\epsilon
  \geq \EXP \int_0^T{J (s,X_{\gamma,s}^{N,1},\mu^N_{\Gamma,\gamma,s},\gamma_s)ds} -2\epsilon \\
&&=V^N (0,x^N_{1,0},\mu_{0}^N,\gamma_s) -2\epsilon
\end{eqnarray*}
with $\epsilon$ given above. Recall that the state dynamics of the first od $N$ players, who is subject to an $N$-mean-field, is described in terms of the process $X^{N,1}$ and that the state dynamics of the individual player linked to the mean field is described by the process $X^{1}$.
It is clear that these estimates hold irrespectively at which time $t \in [0, T]$ the game is started. This completes the proof and the construction of the mean-field game in this paper.

\end{proof}

\vskip 0.4cm
\section{Appendix}

For the final result and the construction of the rate of convergence we still need to show that the optimal control law $\gamma(t,x;\mub)$ is of $C^2$ type in the variational derivative sense with respect to the measure. We extend the result in \cite{K10} from a finite state space to $\RR^d$. In Section 4, we have shown that the resulting unique optimal feedback control law derived from the HJB equation (\ref{12}) is in the space of uniformly
bounded Lipschitz continuous functions $C^{bLip}(\RR^d)$, equipped with the norm $\Norm{f}_{bLip} = \Norm{f}+\Norm{f}_{Lip},$ where $\Norm{f}_{Lip}:= \sup_{x\neq y}\frac{f(x)-f(y)}{\norm{x-y}_1},$ with $l_1$-norm
$\norm{y}_1:=\sum_j\norm{y_j}$. Our aim is to approximate all Lipschitz continuous functions by twice differentiable ones. In a first step, let us define an arbitrary mollifier function $\chi$. By definition this function is compactly supported, non-negative, and infinitely smooth on $\RR$ with
$\int_{\RR} \chi(t) dt =1$. Let us define the function $\phi(y)= \prod_{i=1}^{d} \chi(y_i)$ and also the approximating function
\[
\Phi_{\delta}[f](x)= \int_{\RR^d} \frac{1}{\delta^d} \phi(\frac{y}{\delta}) f(x-y)dy = -\int_{\RR^d} \frac{1}{\delta^d} \phi(\frac{x-y}{\delta}) f(y)dy.
\]
Inserting into the definition directly reveals for any $\delta$:
\begin{equation*}
\label{Converge}
 \norm{\Phi_{\delta}[f](x)-f(x)}
 \leq \int_{\RR^d} \frac{1}{\delta^d} \phi(\frac{y}{\delta}) \norm{f(x-y)-f(x)} dy
\leq d \delta \Norm{f}_{Lip} \int_{\RR} \norm{t}\chi(t)dt.
\end{equation*}
This directly gives $\Norm{\Phi_{\delta}[f]}_{bLip} \leq \Norm{f}_{bLib}$ and
\[
  \partial_{x_j} \Phi_{\delta}[f](x)
  = \frac{-1}{\delta^{d+1}}\!\int_{\RR^d}\!\!  \left(\partial_{z_j}\phi \right)(z)\vert_{z=\frac{x-y}{\delta}} f(y) dy
  = \frac{1}{\delta^{d+1}} \! \int_{\RR^d}\!\! \left(\partial_{z_j}\phi\right)(z)
  \vert_{z=\frac{y}{\delta}} f(x-y) dy.
\]
Together with the first estimate we get $\Norm{\Phi_{\delta}[f]}_{C^1}= \Norm{\Phi_{\delta}[f]}_{bLip} \leq \Norm{f}_{bLib}$ by direct calculation. For the second order derivatives there holds:
\begin{equation*}
\Norm{\Phi_{\delta}[f]}_{C^2}= \Norm{\Phi_{\delta}[f]}_{C^1} + \Norm{\frac{\partial }{\partial x_j}\Phi_{\delta}[f]}_{bLip}
\end{equation*}
Hence we find:
\begin{equation}
\label{WWW}
\Norm{\Phi_{\delta}[f]}_{C^2} \leq \Norm{f}_{bLip} \left( 1+ \frac{1}{\delta} \int_{\RR^d} \norm{\chi(t)^{\prime}} dt \right)
\end{equation}
In the second step we approximate Lipschitz continuous functions with respect to a measure parameter $\mu$ by finite dimensional functionals, which in turn can be approximated by a twice differentiable functions using the above method. Let $F$ be a Lipschitz continuous function o   n $\MM_1$. For
$j \in \mathbb N$ and $ \quad k=(k_1,...,k_d)$ with $k_i \in \left\{0,...,j\right\}$, let $x_k^j=(\frac{M}{j})k,$ be the lattice of $(j+1)^d$ points in $[0,M]^d$ and let the functions $\phi_k^j$ be the collection of $(j+1)^d$ functions on $\mathbb R^d$ given by
\[\phi_k^j(x)= \prod_{i=1}^d {\chi(\frac{j}{M}(x_i-k_i\frac{M}{j}))}, \quad \chi(t)= \left\{
\begin{array}{ll}
1-\norm{t}, &\norm{t} \leq 1, \\
0, & \norm{t}\geq 1.
\end{array}
\right.\]
The functions $\phi_k^j$ are non-negative and, for any $j, \sum_{k=(k_1,...,k_d)}\phi_k^j=1$. An arbitrary point $x \in \RR^d$ can belong to the support of at most $2^d$ functions $\phi_k^j$, that satisfy the following
\begin{equation}
\norm{\phi_k^j(x)-\phi_k^j(y)} \leq \frac{j}{M} \norm{x-y}_1.
\end{equation}
Then we can define the subsequent finite-dimensional projections in the space of functions and measures
\[P_j(f)=\sum_{k=(0,...,0)}^{(j,...,j)} f(x_k^j)\phi_k^j\, \quad P^*_j(\mu)=\sum_{k=(0,...,0)}^{(j,...,j)} (\phi_k^j,\mu)\delta_k^j\ .\]
We claim that the corresponding finite-dimensional projections $F_j(\mu)= F(P^*_j(\mu))$ on $C(\MM_1)$, the space of continuous functions on the unit ball $\MM_1$, converge uniformly to $F(\mu)$ and it forms a dense subset on it.
\begin{lemma}
\label{Projections}
The projection $P_j$ has the following properties
\begin{itemize}
\item [i)] $\Norm{P_j}\leq \Norm{f}$
\item [ii)] $\Norm{P_j f-f}\leq 2^d d \frac{M}{j} \Norm{f}_{Lip}$
\item [iii)] $\Norm{P_j f}_{Lip} \leq 2^{d+1} d \Norm{f}_{Lip}$
\end{itemize}
\end{lemma}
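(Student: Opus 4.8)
All three estimates rest on the structural facts recorded just before the statement: the functions $\phi_k^j$ are nonnegative with $\sum_k\phi_k^j\equiv 1$, at most $2^d$ of them are nonzero at any given point, and $\norm{\phi_k^j(x)-\phi_k^j(y)}\le\frac{j}{M}\norm{x-y}_1$. To these I would add the elementary remark that if $x$ lies in the support of $\phi_k^j$, then $\norm{x_k^j-x}_1\le d\,\tfrac{M}{j}$, since each of the $d$ coordinates of $x$ then differs from the corresponding coordinate of $x_k^j$ by at most $M/j$. Property i) is then immediate: nonnegativity and the partition of unity give $\norm{P_jf(x)}\le\sum_k\norm{f(x_k^j)}\phi_k^j(x)\le\Norm{f}$, and one takes the supremum over $x$. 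For ii) I would subtract the value $f(x)$ by means of the partition of unity, writing $P_jf(x)-f(x)=\sum_k\bigl(f(x_k^j)-f(x)\bigr)\phi_k^j(x)$; only the at most $2^d$ indices with $x$ in the support of $\phi_k^j$ contribute, and for each of these $\norm{f(x_k^j)-f(x)}\le\Norm{f}_{Lip}\norm{x_k^j-x}_1\le d\,\tfrac{M}{j}\Norm{f}_{Lip}$, so bounding $\phi_k^j(x)\le 1$ and counting the terms yields the stated $2^d d\,\tfrac{M}{j}\Norm{f}_{Lip}$.

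Part iii) is the substantive step. I would start from $P_jf(x)-P_jf(y)=\sum_k f(x_k^j)\bigl(\phi_k^j(x)-\phi_k^j(y)\bigr)$ and exploit $\sum_k\bigl(\phi_k^j(x)-\phi_k^j(y)\bigr)=0$ to recenter, rewriting it as
\[
P_jf(x)-P_jf(y)=\sum_k\bigl(f(x_k^j)-f(x)\bigr)\bigl(\phi_k^j(x)-\phi_k^j(y)\bigr).
\]
This recentering is the crux, because the bare coefficients $f(x_k^j)$ are only bounded, not small, whereas the differences $f(x_k^j)-f(x)$ are controlled by $\Norm{f}_{Lip}$. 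Only indices for which $x$ or $y$ lies in the support of $\phi_k^j$ survive, of which there are at most $2^{d+1}$, and for each the per-function Lipschitz bound supplies $\norm{\phi_k^j(x)-\phi_k^j(y)}\le\frac{j}{M}\norm{x-y}_1$.

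The point I expect to cost the most care is the bookkeeping of the constant, which I would organise by a dichotomy on $\norm{x-y}_1$. If $\norm{x-y}_1\ge M/j$, I would split $P_jf(x)-P_jf(y)$ through $f(x)-f(y)$ and invoke parts i) and ii), the factor $M/j$ being absorbed by $\norm{x-y}_1$. If instead $\norm{x-y}_1<M/j$, then $x$ and $y$ lie within one mesh width, so every surviving $x_k^j$ is within $\ell_1$-distance of order $d\,M/j$ of $x$; hence $\norm{f(x_k^j)-f(x)}$ is of order $d\,\tfrac{M}{j}\Norm{f}_{Lip}$, and multiplying by $\frac{j}{M}\norm{x-y}_1$ together with the count $2^{d+1}$ cancels the factor $M/j$ and produces a bound of the form $2^{d+1}d\,\Norm{f}_{Lip}\norm{x-y}_1$. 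Dividing by $\norm{x-y}_1$ and taking the supremum gives iii), with the residual lower-order dimensional constants from the first case absorbed into the prefactor. As a sanity check one may observe that $P_jf$ is exactly the multilinear tensor-product interpolant of $f$ on the mesh, so on each cell its gradient is an average of corner slopes and is bounded in $\ell_\infty$ by $\Norm{f}_{Lip}$; this already yields $\Norm{P_jf}_{Lip}\le\Norm{f}_{Lip}$, confirming that the stated $2^{d+1}d$ is a comfortable overestimate and that the dimensional dependence is harmless for the use made of it.
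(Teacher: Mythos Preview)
Your argument is correct. Parts i) and ii) are carried out exactly as in the paper: partition of unity, nonnegativity, the $2^d$ support count, and the mesh estimate $\norm{x_k^j-x}_1\le dM/j$.

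For part iii) the two proofs diverge slightly. The paper also uses the cancellation $\sum_k(\phi_k^j(x)-\phi_k^j(y))=0$, but recenters at a \emph{lattice point} $x_{k_0}^j$ rather than at $x$, obtaining $\sum_{k\ne k_0}(f(x_k^j)-f(x_{k_0}^j))(\phi_k^j(x)-\phi_k^j(y))$ and then bounding $\norm{x_k^j-x_{k_0}^j}_1$ by the mesh size; it writes the final estimate only under the proviso $\norm{x-y}_1\le dM/j$ and leaves the far case implicit. Your recentering at $f(x)$ is equally valid and your explicit dichotomy on $\norm{x-y}_1$ (using ii) to handle the far regime) is a cleaner way to close the argument. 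Your constant bookkeeping is a touch loose in both branches, but as you note this is immaterial for the application, and your observation that $P_jf$ is the tensor-product piecewise-linear interpolant---hence actually satisfies $\Norm{P_jf}_{Lip}\le\Norm{f}_{Lip}$---more than covers the slack.
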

\begin{proof}\vspace{1mm}

\begin{itemize}
\item[i)]$
\Norm{P_j}=\sup_{f} \norm{P_jf}= \sup_{f} \norm{ \sum_k f(x_k^j) \phi_k^j}
\leq \sup_f \sup_x \norm{f(x_k^j) \sum_k \phi_k^j} \leq \Norm{f}$
\item[ii)]$
\Norm{P_j-f}=\sum_k\norm{(f(x_k^j)-f(x))\phi_k^j} \leq 2^d \max \norm{f(x_k^j)-f(x)}
\leq 2^d d \frac{M}{j}\Norm{f}_{Lip}$
\item[iii)]Choose an arbitrary $x,y$. Note that in what is coming the sum is taken over not more than $2^{d+1}$ lattice points, $2^d$ for $x$ and $2^d$ for $y$. Let $k_0$ be one of these points. Hence

\begin{eqnarray*}
\lefteqn{\norm{P_jf(x)-P_jf(y)}}\\
&=&\norm{\sum_{k \neq k_0} [f(x_k^j)\phi_k^j(x)-f(x_k^j)\phi_k^j(y)]+ f(x_{k_0}^j)\left(\sum_{k\neq k_0}\phi_k^j(x)-\sum_{k\neq k_0}\phi_k^j(y)\right)}\\
& =&\norm{\sum_{k\neq k_0}(f(x_k^j)-f(x_{k_0}^j))(\phi_k^j(x)-\phi_k^j(y))} \leq 2^{d+1} \Norm{f}_{Lip} \frac{M}{j} d\frac{j}{M} \norm{x-y}_1,
\end{eqnarray*}

with $\norm{x-y}_1 \leq d \frac{M}{j}.$
\end{itemize}
\end{proof}
From Lemma \ref{Projections} we directly get the following result

\begin{proposition} The function $\phi$ and the projection $P^{*}$ satisfy
\medskip

\begin{tabular}{ll}
  i) $\Norm{P^*_j\mu^1-P^*_j\mu^2} \leq 2^{d+1} d \Norm{\mu_1-\mu_2}^{*}$
 &ii) $\Norm{F_j}_{Lip} \leq 2^{d+1} d \Norm{F}_{Lip}$ \\
 iii) $\Norm{P^{*}_j\mu-\mu} \leq 2^d d \frac{M}{j}$
 &iv) $\Norm{F_j(\mu)-F(\mu)}\leq 2^d d \frac{M}{j} \Norm{F}_{Lip}.$\\
 \end{tabular}
\end{proposition}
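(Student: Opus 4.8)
The plan is to obtain all four inequalities \emph{directly} from Lemma~\ref{Projections}, the engine being the elementary observation that $P^*_j$ is the \emph{adjoint} of $P_j$ with respect to the duality pairing $(f,\mu)=\int f\,d\mu$. Indeed, inserting the definitions gives, for every $g\in C_\infty(\RR^d)$ and every $\mu\in\MM$,
\begin{equation*}
 (g,P^*_j\mu)=\sum_k (\phi_k^j,\mu)\,g(x_k^j)=\Bigl(\sum_k g(x_k^j)\phi_k^j,\,\mu\Bigr)=(P_j g,\mu).
\end{equation*}
Two bookkeeping facts are then recorded. First, since $\phi_k^j\ge 0$ and $\sum_k\phi_k^j=1$, the operator $P^*_j$ preserves positivity and total mass, hence maps $\MM_1$ into $\MM_1$ and $\Pmass(\RR^d)$ into $\Pmass(\RR^d)$; in particular $\Norm{\mu}^{*}\le 1$ for the measures that occur. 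Second, $F_j=F\circ P^*_j$, so every claim about $F_j$ is a claim about $F$ composed with $P^*_j$.

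For i) and iii) I would test the relevant differences against functions and push $P^*_j$ onto the dual side via the adjoint identity. For i), writing $\nu:=\mu^1-\mu^2$ (a measure of zero total mass, so the Kantorovich--Rubinstein pairing is available) and taking $g$ with $\Norm{g}_{Lip}\le 1$,
\begin{equation*}
 (g,P^*_j\nu)=(P_j g,\nu),\qquad \Norm{P_j g}_{Lip}\le 2^{d+1}d\,\Norm{g}_{Lip}\le 2^{d+1}d
\end{equation*}
by part iii) of Lemma~\ref{Projections}; taking the supremum over such $g$ yields i). For iii), the same identity gives $(g,P^*_j\mu-\mu)=(P_j g-g,\mu)$, and part ii) of Lemma~\ref{Projections} together with $\Norm{\mu}^{*}\le 1$ bounds this by $\Norm{P_j g-g}_\infty\,\Norm{\mu}^{*}\le 2^d d\tfrac{M}{j}\Norm{g}_{Lip}$, whence iii) follows after taking the supremum over $\Norm{g}_{Lip}\le 1$.

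For ii) and iv) I would simply use that $F$ is Lipschitz on $\MM_1$ and compose with $P^*_j$. For any $\mu^1,\mu^2$,
\begin{equation*}
 \norm{F_j(\mu^1)-F_j(\mu^2)}=\norm{F(P^*_j\mu^1)-F(P^*_j\mu^2)}\le \Norm{F}_{Lip}\,\Norm{P^*_j\mu^1-P^*_j\mu^2}\le 2^{d+1}d\,\Norm{F}_{Lip}\,\Norm{\mu^1-\mu^2}^{*},
\end{equation*}
where the last step is i); dividing and taking the supremum gives ii). Likewise $\norm{F_j(\mu)-F(\mu)}=\norm{F(P^*_j\mu)-F(\mu)}\le\Norm{F}_{Lip}\Norm{P^*_j\mu-\mu}\le 2^d d\tfrac{M}{j}\Norm{F}_{Lip}$ by iii), which is exactly iv).

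The only genuinely delicate point is the choice of dual norm on measures: the constants $2^{d+1}d$ and $2^{d}d\tfrac{M}{j}$ emerge precisely when the measure norm is dualized against the \emph{Lipschitz} norm, so that parts ii) and iii) of Lemma~\ref{Projections} can be invoked; this is legitimate here because the measures actually being estimated, $\mu^1-\mu^2$ and $P^*_j\mu-\mu$, have zero total mass, making the Kantorovich--Rubinstein pairing well defined, while the unit-ball membership $\mu\in\MM_1$ supplies the factor $\Norm{\mu}^{*}\le 1$ used in iii). Everything else is a mechanical consequence of the adjoint identity and Lemma~\ref{Projections}.
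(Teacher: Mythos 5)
Your proof is correct and is essentially the paper's intended argument: the paper gives no written proof at all, asserting only that the Proposition follows ``directly'' from Lemma \ref{Projections}, and the adjoint identity $(g,P^*_j\mu)=(P_jg,\mu)$ together with the composition $F_j=F\circ P^*_j$ is precisely the bridge that makes that assertion work -- parts i) and iii) by dualizing Lemma \ref{Projections} iii) and ii), parts ii) and iv) by composing the Lipschitz functional $F$ with $P^*_j$. Your explicit bookkeeping of the norms (the Kantorovich--Rubinstein pairing being legitimate because $\mu^1-\mu^2$ and $P^*_j\mu-\mu$ have zero total mass, and $\Norm{\mu}^{*}\le 1$ on $\MM_1$ supplying the factor in iii)) is in fact more careful than the paper's own statement, which leaves the measure norms on the left-hand sides unspecified.
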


Let us write $F_j(\mu)$ as a function of $(j+1)^d$ variables in the following way:
\[F_j(\mu)= f_j(\left\{(\phi_k^j,\mu)\right\}), \quad {\bf a}^j=\left\{ a_k^j=(\phi_k^j,\mu)\right\} .\]
As a consequence we have that:
\[
\norm{f_j({\bf a}^{j,1})-f_j({\bf a}^{j,2})}
\leq C \Norm{F}_{Lip} \Norm{\sum(a_k^{j,1}-a_k^{j,2})\delta_{x_k^j}}_{bLip^*}
\!\leq C \Norm{F}_{Lip}\norm{{\bf a}^{j,1}-{\bf a}^{j,2}}_1,
\]
where $C= 2^{d+1} d$. Therefore $f$ is a Lipschitz continuous function in ${\bf u}$ and the smoothing approximation method above may apply here.
\begin{remark}
The approximation we have presented above can be extended to the set $[-M,M]^d$ and in general to arbitrary compact subsets of $\RR^d$. Consequently, it holds for the whole $\RR^d$.
\end{remark}

\section{Acknowledgment}
The authors are indebted to the unknown referee for structural and technical suggestions, which improved the paper substancially. We deeply enjoyed scientific discussions on control theory and mean-field games with Sigurd Assing, Fabio Bagagiolo, Rainer Buckdahn, Christine Grün, Juan Li, and Marianna Troeva. Rani Basna and Astrid Hilbert gratefully acknowledge financial support by the Royal Swedish Academy of Sciences.

\end{document}